\newcommand{\R}{{\mathbb R}}
\newcommand{\N}{{\mathbb N}}
\newcommand{\Z}{{\mathbb Z}}
\newcommand{\T}{{\mathbb T}}
\newcommand{\fper}{f_{{\rm per}}}
\newcommand{\uper}{u_{{\rm per}}}
\newcommand{\ellper}{\ell_{{\rm per}}}
\newcommand{\Hper}{H_{{\rm per}}}
\newcommand{\chiperp}{\chi_{{\rm per},p}}
\newcommand{\upereps}{u_{{\rm per},\varepsilon}}
\newcommand{\gpereps}{g_{{\rm per},\varepsilon}}
\newcommand{\hpereps}{h_{{\rm per},\varepsilon}}
\newcommand{\gzeroeps} {g_{0,\varepsilon}}
\newcommand{\hzeroeps} {h_{0,\varepsilon}}
\let\ds=\displaystyle
 \definecolor{mypurple}{RGB}{140,0,255}
\definecolor{myred}{rgb}{255,0,0}
\definecolor{mydarkturquoise}{RGB}{0,206,209}
\definecolor{mydeeppink}{RGB}{255,20,147}
\definecolor{darkblue}{RGB}{0,0,140}
\definecolor{blue2}{RGB}{0,0,0}
\definecolor{middleblue}{RGB}{0,0,71}
\definecolor{light-gray}{gray}{0.9}
\definecolor{ProcessBlue}{cmyk}{1,0,0,0.40}
\definecolor{Black}{cmyk}{0,0,0,1}
\definecolor{Red}{cmyk}{0,1,1,0.2}
\definecolor{Green}{cmyk}{0.9,0,1,0}
\definecolor{Orange}{cmyk}{0,0.61,0.87,0.5}
\definecolor{Fuchsia}{cmyk}{0.47,0.91,0,0.06}
\definecolor{PineGreen}{cmyk}{0.92,0,0.59,0.30}
\numberwithin{equation}{section}
 \newtheorem{remark}{Remark}[section]
\DeclareMathOperator*{\argmin}{arg\,min}
\begin{document}

\title{Homogenization of some periodic Hamilton-Jacobi equations with defects}

\author{
  Yves Achdou\thanks { Universit{\'e} Paris Cit{\'e} and  Sorbonne Universit{\'e}, CNRS, Laboratoire Jacques-Louis Lions, (LJLL), F-75006 Paris, France, achdou@ljll-univ-paris-diderot.fr}
  \and
  Claude Le Bris\thanks{{\'E}cole des Ponts and INRIA, 6-8 avenue Blaise Pascal, Cit{\'e} Descartes, Champs-sur-Marne, 77455 Marne La Vall{\'e}e, France, claude.le-bris@enpc.fr}
  }

\maketitle
\begin{abstract}
We study homogenization  for a class of stationnary Hamilton-Jacobi equations in which the  Hamiltonian is obtained by perturbing  near the origin an otherwise periodic Hamiltonian. We prove that the limiting problem consists of a Hamilton-Jacobi equation outside the origin, with the same effective Hamiltonian as in periodic homogenization, supplemented at the origin with an effective  Dirichlet condition  that keeps track of the perturbation. Various comments and extensions are discussed.
\end{abstract}

\section{Introduction}
\label{sec:introduction}
This work discusses the homogenization limit for a first order stationary Hamilton-Jacobi equation of the form 
\begin{equation*}
\alpha \, u_\varepsilon+ H\left(\frac x \varepsilon, D u_\varepsilon\right)=0\quad \hbox{ in }\R^d,
\end{equation*}
and some related equations, where the Hamiltonian~$H$, besides satisfying some classical assumptions,  is obtained by a \emph{local}, compactly supported, perturbation of a periodic Hamiltonian~$\Hper$. We henceforth refer to such a perturbation as a local \emph{defect}.

Homogenization theory in the presence of local defects within an otherwise periodic environment was first introduced in~\cite{Milan}, in the first of a series of works by the second author, in collaboration with X.~Blanc and P-L.~Lions. It was further developed in~\cite{CPDE-2015,CPDE-2018} and other subsequent works by various authors, considering different classes of defects such as, in particular, interfaces between two different periodic media.  In those works, the typical setting  is that of a linear non-degenerate elliptic equation, first in divergence form and next in more general form. Only recently, some quasilinear equation was considered in~\cite{Wolf}. 

On the other hand, extensions to the context of nonlinear Hamilton-Jacobi equations arising from optimal control theory were  first addressed in a series of lectures~\cite{PLL-college} at Coll{\`e}ge de  France by P-L. Lions, where some results obtained in collaboration with P. Souganidis, ~\cite{PLL}, were described. Since the present contribution is very much related to \cite{PLL-college,PLL}, we will devote a special paragraph below to commenting our results in view of the latter references  and to stressing the similarities and differences.
In \cite{C-LB-S},  P.~Cardaliaguet, P.~Souganidis and the second author considered the extension to a fully nonlinear equation of a specific \emph{probabilistic} setting  originally introduced,  in the linear elliptic setting and from a computational perspective,  in~\cite{Anantha-LB}. In that setting, the defect is assumed to occur only with a small probability, and the homogenized limit is identified at the leading order in probability. 
More generally, an introductory account of the entire mathematical endeavor together with a large variety of possible applications and extensions, has appeared in~\cite{Notices}.

\medskip

In a distinct line of works, the first author, in collaboration with N.~Tchou in particular, studied
the asymptotic behaviour of the solutions to some classes of Hamilton-Jacobi equations depending on a vanishing parameter, for which the limit 
is characterized by boundary value problems with transmission conditions involving  effective Hamiltonians. In particular, 
\cite{MR3299352} dealt with dimension reduction for Hamilton-Jacobi equations posed in thin domains converging to networks. When the thickness of the domain vanishes, one finds  at the limit a Hamilton-Jacobi equation posed on the network with a special transmission condition at the vertices. The latter involves a so-called flux limiter which keeps track of the microscopic geometry of the thin domains.
Subsequently, the works \cite{MR3565416} and \cite{MR3912640} were concerned with  Hamilton-Jacobi equations  in an environment consisting of two different homogeneous
media separated by an oscillatory interface. The oscillations of the interface have small period and amplitude, and as both the latter parameters vanish,
the interface tends to an hyperplane. At the limit, one finds on the flat interface an effective nonlinear  transmission condition  keeping memory of the previously mentioned microscopic oscillations. Similarly, G. Galise, C. Imbert and R. Monneau studied in  \cite{MR3441209} a family of time dependent Hamilton-Jacobi equations 
on the simplest possible network composed of two half-lines with a perturbation of the Hamiltonian localized in a small region close to the junction.  Related homogenization problems
with applications  to traffic flows were discussed in \cite{MR3809148,MR4450245} by N. Forcadel and his coauthors.
Key arguments in \cite{MR3299352, MR3565416, MR3912640, MR3441209} are the construction of families of correctors
 that account for the localized pertubations of the environment and are defined in unbounded domains. We will see that the construction of such correctors  also plays a key role in the present work.
Note also that another common feature between the references  \cite{MR3299352, MR3565416, MR3912640, MR3441209} is that at the limit, the new effective transmission conditions are all posed on manifolds of codimension one. The study of such homogenization problems was indeed part of  the significant research effort that took place in the last decade
on the analysis of Hamilton-Jacobi equations posed on heterogeneous structures such as networks
\cite{MR3621434,MR3358634, MR3556345}, {\sl booklet}-like geometries or multidimensional junctions \cite{barles2013bellman,MR3690310, oudet2015equations}.
These problems all involve Hamilton-Jacobi equations on manifolds with nonlinear transmission conditions on submanifolds with codimension one.
In sharp contrast, in the present work, the problem arising at the limit involves a boundary condition posed at the origin, which is generally not a manifold of codimension one. This causes technical difficulties and a priori prevents one from relying on the theories contained in \cite{MR3621434,MR3358634, MR3556345}.
On the contrary, the problem arising at the limit falls into the class of {\sl stratified} problems (and is a particularly simple example of such problems) introduced by A. Bressan and coauthors, see \cite{MR2291823} and later studied by G. Barles and E. Chasseigne,  see \cite{barles2018illustrated}. By and large, these problems involve Hamilton-Jacobi equations with discontinuities in a more general geometric setting, namely {\sl stratifications} of $\R^d$.
The  latter references contain in particular  comparison principles for viscosity solutions of stratified problems, but, up to our knowledge,  there is no literature yet on homogenization or singular perturbations leading to stratified problems, and in particular no proofs of convergence.

Finally, in connection with \cite{MR3565416, MR3912640},  the present contribution can be seen as  a first step toward homogenization theory for Hamilton-Jacobi equations with two different possibly periodic media separated by an  interface presenting localized defects of periodicity. We plan to address this aspect in a forecoming work.

\medskip

As already mentioned, the present study (essentially) considers a single localized defect, supported say in a neighborhood of  the origin,  within a periodic medium.
It identifies the homogenized limit in the case of a \emph{first order} Hamilton-Jacobi equation arising from optimal control, thus with a convex Hamiltonian; we will assume that the Hamiltonian has agreeable mathematical properties, which  will be made precise in the next section.
It is natural to expect that  at the limit, except possibly at the origin, the Hamilton-Jacobi equation involving the effective Hamiltonian obtained from periodic homogenization is satisfied.  We will see that if the defect has the effect of attracting the optimal trajectories starting not too far from the origin, (which occurs  when  the running cost or potential of the optimal control problem displays a prominent \emph{downward} bump within the periodic environment), then the homogeneity of the  environment is broken in the homogenized limit. We will indeed prove that, in this case, the limit problem involves a Dirichlet boundary condition at the origin. Besides, it is well known that, in singular domains such as $\R^d \setminus\{0\}$, uniqueness does not hold for the now classical formulation of Dirichlet problems due to H. Ishii, see e.g. \cite[chapter V, section 4]{MR1484411}. By contrast, uniqueness holds for the formulation of the Dirichlet problem that we find here from homogenization.
We stress again  that because of the effective Dirichlet condition, the limit is \emph{generally not} an Hamilton-Jacobi equation on the whole ambient space.

\medskip

We wish to emphasize that the results exposed here are to be considered in the light of those obtained earlier by P-L.~Lions and  P.~Souganidis and summarized in~\cite{PLL-college} (and, likewise, of~\cite{C-LB-S} for randomized defects). In the control theoretic interpretation of that work, the typical local perturbation of the running cost is a bump oriented \emph{upwards} so that  the neighborhood of the origin becomes repulsive.
In~\cite{PLL-college}, the presence of such a  defect indeed does not affect the homogenized limit,
but only possibly \emph{"the next order correction"}, that is the definition of the corrector function itself.
In sharp contrast, if the defect makes the origin attractive,
then it  affects the homogenized limit itself. Note that it is not unexpected that signs, in the wide acceptation of that term, play a critical role in the context of nonlinear  equations.

\medskip

In the authors' view, the present work, together with the results of~\cite{PLL-college},    lay the groundwork for  a complete theory for Hamilton-Jacobi equations in periodic environments with local defects. Many challenging questions, some of them presumably quite difficult, however remain unsolved.

\medskip

Our article is more precisely organized as follows.  In the rest of this introduction, we first make precise, in Section~\ref{sec:setting}, the mathematical setting and assumptions of the problem we study. The Hamilton-Jacobi equation under consideration reads as~\eqref{eq:7} below. We next state and comment in Section~\ref{sec:main-result} our main result, namely Theorem~\ref{sec:main-result-1} which establishes for~\eqref{eq:7} the homogenized limit~\eqref{eq:11} through \eqref{eq:14} made precise therein. Section~\ref{sec:homog-refeq:7} then contains the detailed proof of Theorem~\ref{sec:main-result-1}. The proof essentially falls in four steps, respectively contained in Sections~\ref{sec:ergod-const-assoc} through~\ref{proof_of_main_th}. The final Section~\ref{sec:extension} contains several comments upon and extensions of the problem considered. First, in Section~\ref{sec:1D}, we illustrate in the one-dimensional setting  the general results of Section~\ref{sec:main-result}. We in particular consider, in Section~\ref{ssec:proba}, a probabilized variant of our problem inspired by some of the earlier works recalled above. Section~\ref{ssec:others} discusses the extension to higher dimensions of
the  considerations and results given in Section \ref{sec:1D}.

\section*{Acknowledgments}
The first author is partially on academic leave at Inria for the years 2021-22 and 2022-23 and acknowledges the hospitality of this institution during this period.
This research was partially supported by ANR (Agence Nationale de la Recherche) through project COSS ANR-22-CE40-0010-03. The research of the second author is partially supported by ONR under Grant N00014-20-1-2691 and by EOARD under Grant FA8655-20-1-7043. The final stage of the writing of this manuscript has been completed ultimately when the second author was visiting the Math+ Institute in Berlin. Partial financial support from the Deutsche Forschungsgemeinschaft (DFG, German Research Foundation) under Germany's Excellence Strategy - The Berlin Mathematics Research Center MATH+ (EXC-2046/1, project ID: 390685689) - is thus gratefully acknowledged.

The first author would like to thank N. Tchou for helpful discussions.

\subsection{Setting and assumptions}\label{sec:setting}

Let us define the problem and give the assumptions that will be used in the whole paper.

We consider Hamilton-Jacobi equations linked to infinite horizon optimal control problems in $\R^d$. 
The Hamiltonian $H: \R^d\times \R^d \to \R$ is of the form 
\begin{equation}
  \label{eq:1}
H(x,p)= \max_{a\in A } \Bigl(-p\cdot f(x,a)-\ell(x,a)\Bigr).
\end{equation}

Here, $A$ is a compact metric space,  $f: \R^d\times A\to \R^d$ is  a  bounded and continuous function,  Lipschitz continuous with respect to its first variable uniformly with respect to its second variable, i.e. 
 for any  $x,y\in \R^d$ and $a\in A$,
  \begin{displaymath}
    |f(x,a)-f(y,a)|\le L_f |x-y|,
  \end{displaymath}
 for some positive constant $L_f$.  We also suppose that there exists some radius $r_f>0$ such that for any $x\in \R^d$, $\{f(x,a),a\in A\}$ contains the ball $B_{r_f}(0)$, which implies that the trajectories are locally strongly controllable.

  We suppose that the function $\ell: \R^d\times A\to \R$ is  bounded and continuous and that here is a modulus of continuity $\omega_\ell$ 
such that for any  $x,y\in \R^d$ and $a\in A$, 
\begin{displaymath}
|\ell(x,a)-\ell(y,a)|\le \omega_{\ell} (|x-y|).
\end{displaymath}
 Define  $M_f=     \sup_{x\in \R^d, a \in    A} | f(x,a)| $ and $M_\ell=     \sup_{x\in \R^d, a \in    A} | \ell(x,a)| $. It is easy to check that the Hamiltonian $H$ defined in \eqref{eq:1} has the following properties: $H$ is convex with respect to its second argument, and  for any  $x,y,p,q\in \R^d$,
 \begin{eqnarray}
   \label{eq:2} H(x,p)&\ge& r_f |p| - M_\ell,\\
   \label{eq:3}|H(x,p)-H(y,p)|&\le& L_f|p||x-y| + \omega_{\ell} (|x-y|),\\
   \label{eq:4} |H(x,p)-H(x,q)|&\le& M_f |p-q|.
 \end{eqnarray}
Property (\ref{eq:2}) implies the coercivity of $H$ w.r.t. its second variable uniformly in its first variable, i.e.  $\lim_{|p|\to \infty} \inf_{x\in \R^d} H(x,p)=+\infty$.

We next assume that,  except in a neighborhood of the origin, the  dynamics $f$ and cost $\ell$ coincide with periodic functions. Let  $\T^d =\R^d /\Z^d$ denote the torus of $\R^d$. We assume that there exist $R_0>0$, $\fper:  \T^d \times A\to \R^d$ and $\ellper: \T^d \times A\to \R$ such that, if $|x|>R_0$, then $f(x,a)=\fper(x,a)$ and $\ell(x,a)=\ellper(x,a)$. Hence, the suprema $M_f$ and $M_\ell$ introduced above are indeed maxima. Let $\Hper$ stand for the related periodic Hamiltonian:
\begin{equation}
  \label{eq:5}
\Hper(x,p)= \max_{a\in A } \Bigl(-p\cdot \fper(x,a)-\ellper(x,a)\Bigr).
\end{equation}

Let $\alpha$ be a positive scalar and $\varepsilon$ be a small positive parameter
that will eventually vanish. 
It is well known, see e.g. \cite{MR1484411}, that the value function $u_\varepsilon$ of the following optimal control problem:
\begin{equation}\label{eq:6}
  u_\varepsilon(x)= \inf  \int_0 ^\infty    e^{-\alpha t} \ell \left( \frac {y(t)} \varepsilon, a(t)\right) dt \quad  \hbox{subject to } \left\{
    \begin{array}[c]{l}
        y(t)= x+\int_0 ^t f\left ( \frac {y(\tau)}\varepsilon ,a(\tau)\right) d\tau,\\
        a\in L^\infty(\R_+)\\
        a(t)\in A, \; \hbox{a.e.}
    \end{array}
      \right.
\end{equation}
is the unique viscosity solution in $\rm{BUC}(\R^d)$ of 
\begin{equation}
  \label{eq:7}
\alpha \, u_\varepsilon+ H\left(\frac x \varepsilon, D u_\varepsilon\right)=0\quad \hbox{ in }\R^d.
\end{equation}
Our goal is to study the asymptotic behaviour of $u_\varepsilon$ as $\varepsilon\to 0$. 
 \\
Homogenization of the periodic Hamilton-Jacobi equation involving $\Hper$ instead of $H$ is well understood since the pioneering work \cite{LPV}.
In the periodic case, the homogenized equation  is 
\begin{equation}
  \label{eq:8}
\alpha \, u+\overline H (Du)=0 \quad \hbox{ in } \R^d,
\end{equation}
 where the effective Hamiltonian is characterized as follows:
 for any $p\in \R^d$, $\overline H (p)$ is the unique real number such that there exists a periodic corrector $\chiperp$, i.e. a viscosity solution $ \chiperp \in C(  \T^d)$
of 
\begin{equation}
  \label{eq:9}
\Hper(y, p+ D\chiperp)=\overline H(p) \quad \hbox{in }\R^d.
\end{equation}
In general,  the latter periodic corrector is not unique even up to the addition of a scalar constant.
It is well known that, under the assumptions made above, $\overline H$ is convex on $\R^d$, Lipschitz continuous and coercive.
Therefore, there exists a vector $p_0\in \R^d$, possibly non unique,  such that
\begin{equation}
  \label{eq:10}
 \overline H(p_0) =\min_{q\in \R^d} \overline H(q).
\end{equation}
We choose such a vector $p_0$ and fix it for all what follows.

\subsection{The main result}\label{sec:main-result}

Our main result is the following:
\begin{theorem}
  \label{sec:main-result-1}
We consider the solution $u_\varepsilon$ of (\ref{eq:7}).
As $\varepsilon\to 0$, the family $u_\varepsilon$ converges locally uniformly to the unique function $u\in \rm{BUC}(\R^d)$ defined by the following properties:
\begin{enumerate}
\item $u$ is a viscosity solution of
 \begin{equation}
\label{eq:11}
 \alpha \, u+\overline H(Du)= 0\quad  \hbox{in }\R^d \setminus \{0\},
\end{equation}
with the effective Hamiltonian $\overline H$  defined above by periodic homogenization, see (\ref{eq:9})
\item  The condition
  \begin{equation}
    \label{eq:12}
    \alpha \, u(0)+E\le 0
  \end{equation}
holds, where $E$ is the  effective Dirichlet datum  defined in paragraph~\ref{sec:ergod-const-assoc} below. In addition, if $\phi\in C^1(\R^d)$ is such that $u-\phi$ has a local maximum at the origin, then 
  \begin{equation}\label{eq:13}
 \alpha \, u(0)+\overline H(D\phi(0))\le 0.
  \end{equation}
\item If $\phi\in C^1(\R^d)$ is such that $u-\phi$ has a local minimum at the origin, then 
  \begin{equation}\label{eq:14}
 \alpha \, u(0)+\max\left( E, \overline H(D\phi(0))\right)\ge 0.
  \end{equation}
\end{enumerate}
\end{theorem}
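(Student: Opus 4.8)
The plan is to follow the classical perturbed test function method of Evans, adapted to handle the singularity at the origin, and combined with a comparison principle for the limit (stratified) problem. The argument splits into four parts, mirroring the structure announced in the introduction.

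\emph{Step 1: compactness and the equation away from the origin.} Since $\ell$ is bounded, the value functions $u_\varepsilon$ are uniformly bounded by $M_\ell/\alpha$; the strong local controllability hypothesis ($B_{r_f}(0)\subset\{f(x,a)\}$) together with the coercivity estimate \eqref{eq:2} gives a uniform modulus of continuity for the $u_\varepsilon$, so by Arzel\`a--Ascoli a subsequence converges locally uniformly to some $u\in\mathrm{BUC}(\R^d)$. On any compact subset of $\R^d\setminus\{0\}$ one has, for $\varepsilon$ small, $f(\cdot/\varepsilon,\cdot)=\fper(\cdot/\varepsilon,\cdot)$ and $\ell(\cdot/\varepsilon,\cdot)=\ellper(\cdot/\varepsilon,\cdot)$ on the relevant region, so the standard periodic homogenization result of \cite{LPV} (perturbed test function with the corrector $\chiperp$ of \eqref{eq:9}) applies verbatim and shows that $u$ solves \eqref{eq:11} in the viscosity sense on $\R^d\setminus\{0\}$. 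This is routine.

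\emph{Step 2: the upper obstacle condition \eqref{eq:12} and the subsolution property \eqref{eq:13} at the origin.} For \eqref{eq:12}, the idea is to exploit that the defect is attractive: using the corrector at scale $\varepsilon$ near the origin (the bounded solution of the ergodic cell-type problem defining $E$, constructed in Section~\ref{sec:ergod-const-assoc}), one builds from $u_\varepsilon$ near $0$ an admissible perturbed test function and tests against the constant-in-$x$ competitor to get $\alpha u_\varepsilon(0)+E\le o(1)$, hence \eqref{eq:12}. For \eqref{eq:13}, suppose $u-\phi$ has a strict local max at $0$ with $\phi\in C^1$; away from the origin \eqref{eq:11} already gives the subsolution inequality, and a localization/doubling argument pushing the test point toward $0$, again using the periodic corrector for the slope $D\phi(0)$, yields $\alpha u(0)+\overline H(D\phi(0))\le 0$ in the limit. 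The key point is that since the origin is a single point (codimension $d$), no information can "escape" through it, so the subsolution inequalities are simply inherited.

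\emph{Step 3: the supersolution condition \eqref{eq:14} at the origin.} This is the crux. If $u-\phi$ has a local min at $0$, one must show $\alpha u(0)+\max(E,\overline H(D\phi(0)))\ge0$. The $\overline H(D\phi(0))$ alternative comes from the interior supersolution property \eqref{eq:11} by passing to the limit along points approaching $0$. For the $E$ alternative, one argues by contradiction: if both $\alpha u(0)+E<0$ and $\alpha u(0)+\overline H(D\phi(0))<0$, then one constructs a strict supersolution of \eqref{eq:7} near the origin that lies above $u_\varepsilon$ on the boundary of a small ball but below at the center, contradicting the comparison principle for \eqref{eq:7}; here the bounded corrector associated with $E$ (whose existence and the very definition of $E$ in Section~\ref{sec:ergod-const-assoc} encodes exactly the cost of an optimal trajectory that dives into the defect) provides the needed barrier. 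I expect this step — reconciling the two competing mechanisms (optimal trajectories that ignore the defect, giving $\overline H$, versus those attracted to it, giving $E$) and getting the clean $\max$ in \eqref{eq:14} — to be the main obstacle, precisely because it requires the careful corrector analysis of Section~\ref{sec:ergod-const-assoc}.

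\emph{Step 4: uniqueness and identification of the full limit.} Finally one invokes (or proves, in this simple stratified setting) a comparison principle: any subsolution in the sense of items (1)--(2) lies below any supersolution in the sense of (1),(3). This is the comparison result for stratified problems of \cite{barles2018illustrated} specialized to the trivial stratification $\R^d=(\R^d\setminus\{0\})\cup\{0\}$, or can be shown directly by doubling variables with an extra penalization handling the point $\{0\}$. Comparison forces uniqueness of the limit $u$, hence the whole family $u_\varepsilon$ (not merely a subsequence) converges to it, completing the proof.
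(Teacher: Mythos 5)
Your Steps 1, 2 and 4 are in the right spirit (perturbed test function away from the origin, a barrier argument using the ergodic corrector near the origin for \eqref{eq:12}, and comparison for the stratified limit to conclude uniqueness and convergence of the full family), but Step 3 has a genuine gap, and it is precisely where the paper's main technical innovation lies.

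You claim that ``the $\overline H(D\phi(0))$ alternative comes from the interior supersolution property \eqref{eq:11} by passing to the limit along points approaching $0$.'' This is false, and indeed it is exactly why the $\max$ with $E$ is needed in \eqref{eq:14}. For a supersolution of a Hamilton-Jacobi equation on $\R^d\setminus\{0\}$, the supersolution inequality does \emph{not} automatically propagate to the excluded point: consider, when $E>\overline H(p_0)$, the constant test function $\phi\equiv u(0)$, which touches $u$ from below at $0$; if the inequality $\alpha u(0)+\overline H(0)\ge 0$ held one would contradict $\alpha u(0)\le -E< -\overline H(p_0)\le-\overline H(0)$. So in the case $\overline H(D\phi(0))>E$ the bound $\alpha u(0)+\overline H(D\phi(0))\ge 0$ has actual content and requires work. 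The paper's argument here is the crux: instead of using the smooth (or affine) test function $x\mapsto p\cdot x$ with $p=D\phi(0)$, one replaces it by the piecewise affine function $x\mapsto\min(p\cdot x,\tilde p\cdot x)$ where $\tilde p$ is the ``mirror'' slope on the same level set $\overline H=\overline H(p)$ (Remark~\ref{sec:function--underline-3} justifies this replacement), and one then constructs a global corrector $\chi_{p,\tilde p}$ for this piecewise affine function satisfying $H(y,D\chi_{p,\tilde p})=\overline H(p)$ in $\R^d$ with strictly sublinear growth \eqref{eq:55} (Proposition~\ref{sec:function--underline-1}). Its existence is nontrivial: it needs the truncated-cell solutions $w^R$ and the growth estimate of Proposition~\ref{sec:ergod-const-glob-2} as a barrier from below, and control-theoretic estimates on the time optimal trajectories spend near the defect (Lemma~\ref{sec:function--underline-4}) to bound $\chi^R_{p,\tilde p}$ from above via the representation formula \eqref{eq:66}. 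Once $\chi_{p,\tilde p}$ exists, the usual perturbed test function argument with $\phi_\varepsilon(x)=\phi(0)+\varepsilon\chi_{p,\tilde p}(x/\varepsilon)$ gives Proposition~\ref{sec:function--underline-2}; the case $\overline H(D\phi(0))\le E$ (Proposition~\ref{sec:function--underline-5}) follows by a perturbation of the slopes so that the corresponding level of $\overline H$ is $E+\epsilon$, and letting $\epsilon\to0$.

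Relatedly, in your Step 3 you speak of ``the bounded corrector associated with $E$''. The corrector $w$ associated with the defect is \emph{not} bounded: Proposition~\ref{sec:ergod-const-glob-2} shows that when $E>\overline H(p_0)$, $w(y)-p_0\cdot y\to+\infty$ as $|y|\to\infty$. Its unbounded (superlinear above $p_0\cdot y$) growth is in fact what makes the construction of $\chi_{p,\tilde p}$ possible, since $w$ serves as the lower barrier \eqref{eq:59} that eventually exceeds the Dirichlet datum. As for Step 2 / \eqref{eq:13}, your doubling argument would probably work, but the paper proves it more slickly: $\overline u$ is Lipschitz and a subsolution a.e.\ in $\R^d\setminus\{0\}$, hence a.e.\ in $\R^d$; by convexity of $\overline H$ (Prop.\ 5.1, Chap.\ II of \cite{MR1484411}) it is then a viscosity subsolution on all of $\R^d$. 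This is worth knowing as it is a genuinely different, simpler route for the subsolution side; but it has no supersolution analogue, which is precisely why Step 3 cannot be dispatched the same way.
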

\begin{remark}\label{sec:main-result-3}
  The conditions  \eqref{eq:11} through \eqref{eq:14} may be seen as a weak formulation of a Dirichlet boundary value problem comprising the  Hamilton-Jacobi equation  \eqref{eq:11} in the singular open set $\R^d\setminus \{0\}$ and the Dirichlet boundary condition  $u(0)=-E/\alpha$. Note that this formulation is stronger than that due to H. Ishii, which expresses that at the origin,  $u$ is both a viscosity subsolution of $\alpha \, u(0) + \min (\overline H(D u(0)), E)\le 0$ and a  viscosity supersolution of $\alpha \, u(0) + \max (\overline H(D u(0)), E)\ge 0$, see for example \cite[Chapter V, Section 4]{MR1484411}, \cite{MR3135340} and \cite{MR3135341}.
  Indeed, while the supersolution condition \eqref{eq:14} coincides with that of Ishii,
  the subsolution condition  \eqref{eq:12}-\eqref{eq:13} is stronger.\\
  A consequence of the difference between the two formulations regards uniqueness.  Reference  \cite[Chapter V, Section 4]{MR1484411} contains  a counterexample  for the  uniqueness of  viscosity solutions in the sense of Ishii of Dirichlet problems  posed in  $\R^d\setminus \{0\}$. Note also that an interior cone condition is  sufficient  for uniqueness,  see (4.30) in the latter reference, but this condition is not satisfied by  $\R^d\setminus \{0\}$. By contrast, and as we will see in the proof of Theorem  \ref{sec:main-result-1} in Section \ref{proof_of_main_th} below, uniqueness holds for  \eqref{eq:11}, \eqref{eq:12}, \eqref{eq:13}, \eqref{eq:14}.
\end{remark}
\begin{remark}\label{sec:main-result-4}
  The conditions  \eqref{eq:11} through \eqref{eq:14} fall into the general notion of stratified solutions of Hamilton-Jacobi equations, see \cite{MR2291823,barles2018illustrated}, and correspond to  the  partition of $\R^d$ into the sets $\R^d\setminus \{0\}$ and $\{0\}$. The final step in the proof of Theorem  \ref{sec:main-result-1} needs a comparison result which can be found in  \cite{barles2018illustrated} in a much more general setting.
  In order to keep the paper self-contained, we will give a short proof of the comparison principle, see Section \ref{proof_of_main_th},  because it is quite simple in the particular case under consideration.
\end{remark}

\begin{remark}\label{sec:main-result-6}
 With the same assumptions on $H$, Theorem \ref{sec:main-result-1} may be easily generalized to the homogenization of Hamilton-Jacobi equations of the form
    \begin{displaymath}
      \alpha \, u_\varepsilon + H\left(\frac x \varepsilon, D u_\varepsilon\right)= b(x)\quad \hbox{ in }\R^d,
    \end{displaymath}
    where $b\in \rm{BUC}(\R^d)$. The effective Dirichlet datum $E$ would then  depend on $b$ only through its  value  at the origin.
    \\
    Similarly, the result can  be easily extended to problems of the form 
    \begin{displaymath}
      \alpha\, u_\varepsilon + H_\varepsilon(x,Du_\varepsilon)=0 \quad \hbox{in } \R^d,
    \end{displaymath}
    where $ H_\varepsilon(x,p)= \max_{a\in A}  \left( -p\cdot f_\varepsilon(x,a)  - \ell_\varepsilon(x,a) \right)$ and
      \begin{eqnarray*}
          \ell_\varepsilon(x,a)= \ell_{1} (x, \frac x \varepsilon, a) + \ell_0(\frac x \varepsilon, a), \\
          f_\varepsilon(x,a)= f_{1} (x, \frac x \varepsilon, a) + f_0(\frac x \varepsilon, a), 
      \end{eqnarray*}
the functions $ \ell_0$, $f_0$ are as above (they model the perturbation located near the origin) and $ \ell_{1}$ and $f_1$ are smooth with respect to their first argument and periodic with respect to the second one.
 
Other generalizations to different settings may be considered. While those to evolutive problems seem fairly easy,  extensions to {\sl viscous } Hamilton-Jacobi equations seem more challenging.
\end{remark}

\begin{remark}\label{sec:main-result-5}
  As mentioned in the introduction, the homogenization of periodic Hamilton-Jacobi equations in the presence of a defect has been first addressed by P-L. Lions and P. Souganidis, see the results announced in \cite{PLL-college,PLL}. The assumptions made   therein  imply that $E\le \min_{p\in \R^d} \overline H(p)$ and that the defect does not show up in the  homogenized limit, i.e.  $u$ is a viscosity solution of $\alpha \, u + \overline H(Du)=0$ in $\R^d$. We therefore recover as a particular case of our setting the homogenized limit obtained in  \cite{PLL-college,PLL}, under the particular assumptions made there.

For example, in the case when $p_0=0$ in \eqref{eq:10}, we find that $u$ is then 
identically equal to $-\frac 1 \alpha \overline H (0)=-\frac 1 \alpha \min_{p} \overline H (p)$. 

  On the contrary, if
  \begin{equation}
    \label{eq:15}
    E> \overline H (0)=   \min_{p\in \R^d} \overline H(p),
  \end{equation}
then  the defect is  visible in the homogenized limit. Indeed, it can first be proven (see
\S~\ref{sssec:defaut-infini} below) that
 \begin{equation}
    \label{eq:16}
  \lim_{|x|\to \infty} u(x)=-\frac 1 \alpha \overline H (0).
\end{equation}
Combining \eqref{eq:12},  \eqref{eq:15} and \eqref{eq:16}, we then see that $u$ is not a constant function, so it differs from the homogenization limit in the absence of defect.
We also claim that in this particular case, the global infimum of~$u$ is reached at the origin and only at the origin. Indeed,  \eqref{eq:12}, \eqref{eq:15} and \eqref{eq:16} altogether imply that there exists a minimizer $x_0$ of $u$. If $x_0\not=0$, then  \eqref{eq:11} yields that $ \alpha\,u(x_0)\,\ge -{\overline H}(0)$,  thus $ \alpha\,u(x_0)>-E\ge  \alpha\,u(0)$, and we reach a contradiction.


  Note also that,  in  \cite{PLL-college},  thanks to the assumption made,  the authors are able to construct correctors $\chi_p$ in the whole space $\R^d$  for all vectors $p\in \R^d$ ($p$ stands for the gradient of a smooth test-function at the origin).  Their construction  uses arguments from the theory of optimal control. These correctors, which differ from those used in periodic homogenization, can  be used for proving the convergence to the homogenized problem by means of the now classical method of perturbed test-functions, see \cite{MR1007533}.
 
  In contrast, when $E>\min_{p\in \R^d} \overline H(p)$,  we will have to adjust the  strategy proposed in   \cite{PLL-college} in  order to keep relying on the theory of optimal control : using arguments somewhat reminiscent of those proposed in \cite{MR3299352},  we will  construct correctors associated to suitable piecewise affine functions rather than smooth or linear functions. 
\end{remark}

\section{Homogenization of (\ref{eq:7}): the proof of Theorem~\ref{sec:main-result-1}}
\label{sec:homog-refeq:7}
Recall that our goal is to understand the asymptotic behavior of $u_\varepsilon$ as $\varepsilon$ tends to $0$. 
First, using  either comparison principles, see for example \cite[Chapter II, Theorem 3.5]{MR1484411} or arguments from the theory of optimal control, we see  that
\begin{displaymath}
-\max_{y\in \R^d} H(y,0) \le \alpha \, u_\varepsilon(x) \le -\min_{y\in \R^d} H(y,0).  
\end{displaymath}
 From this estimate and \eqref{eq:7}, we infer from the coercivity of the Hamiltonian  that $u_\varepsilon$ is Lipschitz continuous in $\R^d$ with a Lipschitz constant independent of $\varepsilon$.

 In order to study the asymptotic behaviour of $u_\varepsilon$, we consider 
 \begin{eqnarray}
   \label{eq:17}
 \overline{u}(x)=\limsup_{\varepsilon\to 0}u_\varepsilon(x),
\\
\label{eq:18}
 \underline{u}(x)=\liminf_{\varepsilon\to 0}u_\varepsilon(x).
 \end{eqnarray}
 Note that, from the observation above on the regularity of $u_\varepsilon$,
$ \overline{u}$ and  $\underline{u}$ coincide respectively with the half-relaxed semi-limits  $\ds \limsup_{x'\to x, \varepsilon\to 0}u_\varepsilon(x')$ and $\ds \liminf_{x'\to x, \varepsilon\to 0}u_\varepsilon(x')$, that are classically  used in the homogenization of Hamilton-Jacobi equations. It is clear that the functions $ \overline{u}$ and $ \underline{u}$  are  bounded and Lipschitz continuous.

We now observe  that, in $\R^d\setminus\{0\}$, the homogenized equation does not keep track of the  defect of periodicity and is identical to (\ref{eq:8}).
This is  not surprising,  since the support of the defect shrinks to $\{0\}\times A$ as $\varepsilon\to 0$,
 while the related Hamiltonian  does not become singular. 
\begin{proposition}
\label{sec:homog-refeq:7-1}
 The functions $\overline{u}$ and $\underline{u}$ are respectively a bounded subsolution and a bounded supersolution of (\ref{eq:11})  in $\R^d \setminus \{0\}$.
\end{proposition}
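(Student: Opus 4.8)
The plan is to exploit the fact that, away from the origin, the equation \eqref{eq:7} is asymptotically a genuinely periodic Hamilton–Jacobi equation, and then to invoke the classical periodic homogenization result of \cite{LPV} together with the stability of viscosity sub/supersolutions under half-relaxed limits. The key observation is that for any point $x_0 \neq 0$, for $\varepsilon$ small enough the rescaled variable $x/\varepsilon$ stays outside the defect region $B_{R_0}$ on a fixed neighborhood of $x_0$; more precisely, if $|x - x_0| < |x_0|/2$ and $\varepsilon < |x_0|/(2R_0)$, then $|x/\varepsilon| > R_0$, so that $H(x/\varepsilon, p) = \Hper(x/\varepsilon, p)$ there. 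Hence on such a neighborhood $u_\varepsilon$ solves exactly the periodic equation $\alpha u_\varepsilon + \Hper(x/\varepsilon, Du_\varepsilon) = 0$.

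First, I would fix $x_0 \neq 0$ and set $\rho = |x_0|/2$. On the ball $B_\rho(x_0)$ and for $\varepsilon$ small, $u_\varepsilon$ is a (bounded, uniformly Lipschitz) viscosity solution of the periodic problem. Second, I would recall — either by citing \cite{LPV} and the perturbed test function method of \cite{MR1007533}, or by reproducing the short argument — that the half-relaxed upper limit of solutions of $\alpha u_\varepsilon + \Hper(x/\varepsilon, Du_\varepsilon) = 0$ is a viscosity subsolution of $\alpha u + \overline H(Du) = 0$, and the lower limit is a supersolution; this is a purely local statement, valid on $B_\rho(x_0)$. Concretely, for the subsolution property one takes a smooth $\phi$ such that $\overline u - \phi$ has a strict local max at some $\bar x \in B_\rho(x_0)$, assumes for contradiction that $\alpha \overline u(\bar x) + \overline H(D\phi(\bar x)) = \theta > 0$, picks a periodic corrector $\chiperp$ for $p = D\phi(\bar x)$, forms the perturbed test function $\phi(x) + \varepsilon \chiperp(x/\varepsilon)$, and derives a contradiction from the equation satisfied by $u_\varepsilon$ near the perturbed maximum points $x_\varepsilon \to \bar x$ (which lie in $B_\rho(x_0)$ for $\varepsilon$ small, so the periodic equation applies). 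The supersolution property is symmetric. Since $x_0 \neq 0$ was arbitrary, $\overline u$ and $\underline u$ are respectively a subsolution and supersolution of \eqref{eq:11} on all of $\R^d \setminus \{0\}$. Boundedness and Lipschitz continuity have already been noted.

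The only genuine subtlety — and the step I would be most careful about — is the non-uniqueness of the periodic corrector $\chiperp$ and the mild regularity of $\overline H$ and $\Hper$ (only continuity in $y$, Lipschitz in $p$): one must make sure the perturbed test function argument goes through with a merely continuous, not $C^1$, corrector. This is standard (the corrector enters only through the equation it solves in the viscosity sense, and one uses that $\phi + \varepsilon \chiperp(\cdot/\varepsilon)$ is a viscosity subsolution/supersolution of the appropriate equation, combined with a doubling-of-variables or direct comparison at the contact point), but it is the place where a careless proof would break. An alternative, if one prefers to sidestep the corrector regularity issue entirely, is to invoke directly the known periodic homogenization theorem of \cite{LPV}: the restriction of $u_\varepsilon$ to $B_\rho(x_0)$ differs from the solution of the pure periodic problem with the same boundary data only by something controlled, and the latter converges to the solution of $\alpha u + \overline H(Du) = 0$; passing to half-relaxed limits then yields the claim locally, hence globally on $\R^d \setminus \{0\}$. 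Either route makes the proposition essentially a localization of the classical result, with the defect invisible in the limit precisely because its $\varepsilon$-support shrinks to $\{0\}$.
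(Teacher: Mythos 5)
Your proposal is correct and takes essentially the same route as the paper: the paper's proof is a one-line appeal to Evans' perturbed test-function method from \cite{MR1007533}, and your localization observation (that for $x_0\neq 0$ one has $H(x/\varepsilon,p)=\Hper(x/\varepsilon,p)$ on $B_{|x_0|/2}(x_0)$ once $\varepsilon<|x_0|/(2R_0)$) is precisely the remark the paper makes in the sentence preceding the proposition to justify that the classical periodic argument applies verbatim away from the origin. Your flagging of the regularity of the corrector is the right subtlety, and the standard resolution you sketch (use the viscosity formulation of the cell problem together with a doubling/penalization or comparison argument at the contact point) is what the cited reference handles.
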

\begin{proof}
The proof is classical and relies on perturbed test-functions techniques, see \cite{MR1007533}. 
\end{proof}

\paragraph{The strategy of proof  for Theorem~\ref{sec:main-result-1}}
Below, the remainder of the proof of Theorem~\ref{sec:main-result-1}  is done in four different steps. Accordingly,  Section~\ref{sec:homog-refeq:7} is cut into four parts.
The four  steps, which will be rapidly summarized below, mostly rely on the theory of viscosity solutions to first order Hamilton-Jacobi equations, 
except the third one, which  also contains  arguments from the theory of optimal control.
\begin{enumerate}
\item The first step consists of constructing the ergodic constant $E$ associated to the defect and a related  corrector $w$.
It will be  proved that $E\ge \overline H(p_0)$. An important difficulty is that the corrector $w$  must be a function defined in the whole space $\R^d$, which makes it necessary to impose some  condition at infinity.
 We will see that the latter amounts to the fact that  $w$ is the locally uniform limit as $R\to +\infty$ of a family $(w^R)_{R>0}$ of solutions of
problems with state constraints posed in the balls $B_R(0)$.  
From the optimal control theory viewpoint,  these problems,  refered to as {\sl truncated cell problems}, account for trajectories that 
remain close to the defect at the microscopic scale.  Proposition~\ref{sec:ergod-const-glob-2} below contains information on the growth of $w$ at infinity.
\item In the second step, we prove  that  the upper-limit $\overline u$ satisfies conditions (\ref{eq:12}) and (\ref{eq:13}). While  (\ref{eq:13})  is a rather easy consequence of Proposition \ref{sec:homog-refeq:7-1}, the proof of (\ref{eq:12}) relies on
Evans' method of perturbed test-functions, see \cite{MR1007533}.  The construction of the perturbed test-function involves the above mentioned solution $w^R$ to the  truncated cell problem in the ball $B_R(0)$.
\item The third step, the most involved one, consists of proving that the lower-limit $\underline u$ satisfies condition (\ref{eq:14}). Most of the work concerns the situation in which $E> \overline H(p_0)$. Here again, one needs to construct a suitable perturbation of a test-function by a corrector. The crucial theoretical novelty is the choice of the test-function. Instead of using smooth or affine test-functions as it is classically done, we consider  suitable piecewise affine test-functions, in the spirit of \cite{MR3299352}. Their behaviour at infinity  makes it possible to construct associated correctors in the whole space $\R^d$.
 The key intermediate result is Proposition~\ref{sec:function--underline-1} which states the existence of such correctors with a strictly sublinear growth at infinity.
Its proof  uses as an important ingredient the function $w$ introduced in the first step and Proposition~\ref{sec:ergod-const-glob-2}.
 Note also that the proof of Proposition~\ref{sec:function--underline-1} relies on control theoretical arguments in the same spirit as  in \cite{PLL-college}.
The choice of the above mentioned piecewise affine test-functions is precisely made  so that the latter arguments can be applied when $E>  \overline H(p_0)$.
\item The fourth step of the proof mostly consists of deducing from the previously obtained results that $\underline u=\overline u$, by means of a comparison principle. Once this is done, the results announced in Theorem~\ref{sec:main-result-1} follow.
\end{enumerate}

\subsection{The ergodic constant associated to the defect}
\label{sec:ergod-const-assoc}
 We next introduce ingredients which play a key role in the effective boundary condition at the origin.
\subsubsection{Ergodic constants for state-constrained problems in truncated domains}
\label{sec:ergod-const-state}
In order to understand the effect of the defect on the asymptotics of $u_\varepsilon$, we start by solving {\sl truncated cell problems} in balls centered at the origin;
these are associated to  state constrained boundary conditions. From the optimal control theory viewpoint,  these problems account for trajectories that 
remain close to the defect at the microscopic scale.\\
For $\lambda>0$, $R>0$, we know from e.g. \cite{MR838056,MR951880} that  there exists a unique function $w^{\lambda, R} \in C(\overline{B_R(0)})$ such  that
\begin{eqnarray}
  \label{eq:19}
\lambda w^{\lambda, R} + H(y, Dw^{\lambda, R}) &\le& 0 \quad \hbox{ in } B_R(0),\\
\label{eq:20}
\lambda w^{\lambda, R} + H(y, Dw^{\lambda, R}) &\ge& 0 \quad \hbox{ in } \overline {B_R(0)},
\end{eqnarray}
the above inequalities being understood in the sense of viscosity. An equivalent way to write (\ref{eq:19})-(\ref{eq:20}) is the following: 
\begin{eqnarray}
\label{eq:21}
\lambda w^{\lambda, R} + H(y, Dw^{\lambda, R}) &=& 0 \quad \hbox{ in } B_R(0),\\
\label{eq:22}
\lambda w^{\lambda, R} + H^{\uparrow}(y, Dw^{\lambda, R}) &=& 0 \quad \hbox{ on } \partial B_R(0),
\end{eqnarray}
in the sense of viscosity, where, for $y\in \partial B_R(0)$,
\begin{equation}
\label{eq:23}
H^{\uparrow}(y,p)= \max_{a\in A, \;f(y,a)\cdot y \le 0 } \Bigl(-p\cdot f(y,a)-\ell(y,a)\Bigr).
\end{equation}
is the Hamiltonian associated to the admissible trajectories that do not exit the ball $B\overline{(0,R)}$ through $y$. The function $w^{\lambda, R}$ is the value function of the following infinite horizon state constrained optimal control problem in $\overline{B_R(0)}$,
\begin{equation}\label{eq:24}
  w^{\lambda, R}(z)= \inf  \int_0 ^\infty    e^{-\lambda t} \ell \left( \frac {y(t)} \varepsilon, a(t)\right) dt \quad  \hbox{subj. to } \left\{
    \begin{array}[c]{l}
        y(t)= z+\int_0 ^t f\left ( \frac {y(\tau)}\varepsilon ,a(\tau)\right) d\tau\\
        y(t)\in \overline{B_R(0)},\\
        a\in L^\infty(\R_+) \\ a(t)\in A, \; \hbox{a.e.}
    \end{array}
\right.
\end{equation}
which the reader may compare to \eqref{eq:6}.
Since $\ell$ is bounded on $\R^d \times A$
(because, in particular, $A$ is compact and $\ell$ coincides with $\ellper$ out of $B_{R_0}(0)$),  $\lambda  \|w^{\lambda, R}\|_{L^\infty(B_R(0))}$ is bounded uniformly in $\lambda$ 
and $R$. More precisely,
 $  \min_{(y,a)\in \R^d\times A} \ell(y,a) \le   \lambda  w^{\lambda, R} \le \max_{(y,a)\in \R^d\times A} \ell(y,a)$.
This and  the uniform coercivity of $H$ imply with \eqref{eq:21} that $\|Dw^{\lambda,R}\|_ {L^\infty(B_R(0))}  $ is bounded uniformly in $\lambda$ and $R$.

Using Ascoli-Arzel{\`a} theorem, we may suppose that up to the extraction of a sequence,  as $\lambda\to 0$,  $\lambda w^{\lambda, R}$ 
tends uniformly on $\overline {B_R(0)}$ to some {\sl ergodic} constant $-E^R$ which is bounded from above and below uniformly in $R$,   and that $w^{\lambda, R} -w^{\lambda, R}(0)$ tends uniformly 
on $\overline {B_R(0)}$ to some function $w^R$ such that $w^R(0)=0$ and which is Lipschitz continuous with a Lipschitz constant independent of $R$.  By classical results on the stability of viscosity solutions 
of state constrained problems,  $w^R$ is a viscosity solution of 
\begin{eqnarray}
\label{eq:25}
 H(y, Dw^{ R}) &\le& E^R \quad \hbox{ in } B_R(0),\\
\label{eq:26}
 H(y, Dw^{R}) &\ge& E^R \quad \hbox{ in } \overline {B_R(0)}.
\end{eqnarray}
The comparison principle for state constrained problems, see \cite{MR838056,MR951880}, yields the uniqueness 
of $E^R$ such that (\ref{eq:25})-(\ref{eq:26}) has a solution. Thus, 
$\lim_{\lambda\to 0} \| \lambda w^{\lambda, R}+ E^R\| _{C(\overline B_R(0))}=0$ (uniform convergence and not only for a subsequence).

\subsubsection{The ergodic constant and the cell problem}\label{sec:ergod-const-glob}
We deduce for example from (\ref{eq:24})  that 
\begin{displaymath}
  R_1\ge R_2 \quad \Rightarrow \quad \lambda w^{\lambda, R_1} \le \lambda w^{\lambda, R_2}, 
\end{displaymath}
and passing to the limit as $\lambda\to 0$, we obtain the monotonicity property of the ergodic constants $E^R$:
\begin{equation}
  \label{eq:27}
  R_1\ge R_2 \quad \Rightarrow \quad E^{ R_1} \ge E^{ R_2}. 
\end{equation}
Since $E^R$ is bounded from above independently of $R$
(\ref{eq:27})  implies that
\begin{equation}
   \label{eq:28}
   E=\lim_{R\to \infty} E^R
\end{equation}
 exists in $\R$.

Similarly, since  $w^R(0)=0$, $w^R$ is Lipschitz continuous on $\overline {B_R(0)}$ with a Lipschitz constant independent of $R$, we may construct by Ascoli-Arzel{\`a} theorem and  a diagonal extraction argument a sequence $(R_n)_{n\in \N}$,  $R_n\to +\infty$ as $n\to \infty$,  such that $w^{R_n}$ tends to some function $w$ locally uniformly in $\R^d$;  we then see that $w(0)=0$ and $w$ is a  Lipschitz continuous viscosity solution of 
\begin{equation}
  \label{eq:29}
H(y, Dw)= E \quad \hbox{in }\R^d.
\end{equation}
Let us now zoom out and pass to the macroscopic scale by considering the function $w_\varepsilon: x \mapsto \varepsilon w(\frac x \varepsilon)$; it is clearly a viscosity solution of $ H(\frac x \varepsilon , D_x w_\varepsilon)= E$, and it is Lipschitz continuous with the same constant as $w$. Hence, after the extraction of a sequence, we may assume that 
$w_\varepsilon$ converges locally uniformly to some Lipschitz function  $W$ on $\R^d$. As for Proposition \ref{sec:homog-refeq:7-1}, a standard argument yields  that $W$ is a viscosity solution of $\overline H (DW)=E$.  This implies the important inequality 
\begin{equation}
  \label{eq:30}
E\ge \min_{p\in \R^d }  \overline H (p) = \overline H(p_0),
\end{equation}
 the right hand side equality holding because of \eqref{eq:10}.

Proposition  \eqref{sec:ergod-const-glob-2} below gives some information on the behaviour of $w$ at infinity. It will be useful for proving that $\underline u$ satisfies \eqref{eq:14}.
\begin{proposition}
  \label{sec:ergod-const-glob-2}
If $E> \overline H(p_0)= \min _{p\in \R^d} \overline H(p)$, then 
\begin{equation}\label{eq:31}
  \lim_{|y|\to +\infty} w(y)-p_0\cdot y =+\infty,
\end{equation}
where  $w$ is the corrector constructed in (\ref{eq:29}).

If $E=\overline H(p_0)= \min _{p\in \R^d} \overline H(p)$, then $w(y)-p_0\cdot y$ is bounded from below.
\end{proposition}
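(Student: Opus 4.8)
\textbf{Proof plan for Proposition~\ref{sec:ergod-const-glob-2}.}

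The plan is to exploit the fact that $w$ is a global viscosity solution of $H(y,Dw)=E$ in $\R^d$, together with the periodicity of $H$ outside $B_{R_0}(0)$, and compare $w$ with the periodic correctors. First I would recall that $H=\Hper$ outside $B_{R_0}(0)$, so in the region $\{|y|>R_0\}$ the function $w$ solves $\Hper(y,Dw)=E$. Since $E\ge \overline H(p_0)=\min_q\overline H(q)$ by \eqref{eq:30}, and since $\overline H$ is convex and coercive, the level set $\{\overline H\le E\}$ is a nonempty convex compact set; when $E>\overline H(p_0)$ it has nonempty interior, whereas when $E=\overline H(p_0)$ it may reduce to the set of minimizers. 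The key point is a control-theoretic representation: $w(y)$ equals the value of a state-constrained-at-infinity optimal control problem, namely $w(y)=\inf\int_0^\infty e$ replaced by the ergodic/undiscounted formulation, i.e. $w(y)-w(z)\le$ the minimal cost (with running cost $\ell+E$) of trajectories joining $y$ to $z$; this comes from passing to the limit in the truncated problems \eqref{eq:24}. Equivalently, $w$ admits the optimal-control (Lax–Oleinik type) formula as the maximal subsolution of $H(y,Dw)\le E$ vanishing at $0$.

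For the first assertion ($E>\overline H(p_0)$), I would argue by contradiction: suppose $\liminf_{|y|\to\infty}\bigl(w(y)-p_0\cdot y\bigr)<+\infty$. Then along a sequence $y_n$ with $|y_n|\to\infty$, the function $v_n(\cdot)=w(\cdot+y_n)-p_0\cdot(\cdot+y_n)$ stays bounded near $0$; since $w$ is Lipschitz with a constant independent of the construction, the $v_n$ are locally bounded and equi-Lipschitz, so a subsequence converges locally uniformly to some Lipschitz $v_\infty$. Because $|y_n|\to\infty$ and $H=\Hper$ outside a compact set, and using periodicity to realign $y_n$ with a lattice point, $v_\infty+p_0\cdot y$ is a global solution of $\Hper(y,p_0+Dv_\infty)=E$ on all of $\R^d$, hence a bounded-oscillation-at-the-origin solution. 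But then $\varepsilon(v_\infty+p_0\cdot(\cdot/\varepsilon))$ homogenizes and forces $\overline H(p_0)=E$, contradicting $E>\overline H(p_0)$. (Alternatively, and perhaps more cleanly, I would use the strict subsolution property: when $E>\overline H(p_0)$ there is a periodic corrector $\chi$ with $\Hper(y,p_0+D\chi)=\overline H(p_0)<E$, so $y\mapsto p_0\cdot y+\chi(y)-C$ is a strict subsolution; a comparison argument on annuli, using that $w$ is a supersolution of $H\ge E$ and that strict subsolutions of an eikonal-type inequality separate from supersolutions at a linear-in-distance rate, yields $w(y)-p_0\cdot y\to+\infty$.) I would make the quantitative version precise: on $\{|y|>R_0\}$, both $w-p_0\cdot y$ and $-\chi$ satisfy, in the viscosity sense, a comparison that gives $w(y)-p_0\cdot y \ge \delta\,\mathrm{dist}(y,B_{R_0}(0)) + O(1)$ for some $\delta>0$ depending on the gap $E-\overline H(p_0)$ and the coercivity modulus of $\overline H$ near $p_0$, whence \eqref{eq:31}.

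For the second assertion ($E=\overline H(p_0)$), I would show $w(y)-p_0\cdot y$ is bounded below by constructing a subsolution that coincides, up to an additive constant, with $p_0\cdot y+\chiperp[p_0](y)$ in $\{|y|>R_0\}$, where $\chiperp[p_0]$ is a periodic corrector at $p_0$ (which exists since $\overline H(p_0)$ is the ergodic constant), and is suitably adjusted inside $B_{R_0}(0)$; since $w$ is (essentially) the maximal subsolution of $H(y,Dw)\le E$ with $w(0)=0$ — or more directly since $w$ is a supersolution of $H\ge E=\overline H(p_0)$ everywhere and $p_0\cdot y+\chiperp[p_0](y)$ is a subsolution of $\Hper\le \overline H(p_0)$ outside $B_{R_0}$ — a comparison principle on the annulus $\{R_0<|y|<R\}$, letting $R\to\infty$ and using that both functions are Lipschitz so boundary terms at infinity are controlled, gives $w(y)\ge p_0\cdot y + \chiperp[p_0](y)-C\ge p_0\cdot y - C'$ by boundedness of the periodic corrector. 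The main obstacle I anticipate is making the comparison on unbounded annuli fully rigorous: one must handle the state-constraint/boundary behaviour at $|y|=R_0$ correctly (this is where the defect lives) and ensure the ``boundary condition at infinity'' built into the definition of $w$ as the limit of the truncated problems $w^R$ is exactly what is needed to pass $R\to\infty$ without losing the estimate; this is precisely why $w$ was constructed as $\lim_R w^R$ rather than as an arbitrary solution of \eqref{eq:29}, and the argument should lean on that construction plus the monotonicity \eqref{eq:27}.
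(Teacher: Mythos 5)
Your plan correctly identifies the two main ingredients — comparison with periodic correctors, and the use of the truncated state-constrained approximants $w^R$ rather than an arbitrary solution of \eqref{eq:29} — and your treatment of the borderline case $E=\overline H(p_0)$ matches the paper (compare $w^R$ with $p_0\cdot y+\chi_{{\rm per},p_0}(y)-c$ on the annulus, let $R\to\infty$). The reduction to $p_0=0$ via the shift $\widetilde H(x,p)=H(x,p+p_0)$ is also a minor point you could add in one line.

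The gap is in the case $E>\overline H(p_0)$. Your "cleaner alternative" uses the subsolution $p_0\cdot y+\chi_{{\rm per},p_0}(y)-C$, a \emph{strict} subsolution since $\Hper(y,p_0+D\chi_{{\rm per},p_0})=\overline H(p_0)<E$, and invokes a heuristic that strict subsolutions separate from supersolutions at a linear-in-distance rate. That heuristic is false as stated: for $H(p)=|p|$, $u\equiv 0$ is a subsolution at level $0$ and $v(y)=y_1$ is a supersolution at level $1>0$, yet $v-u$ goes to $-\infty$ in one direction. Moreover, comparison with the $p_0$-slope subsolution can at best give $w(y)\ge p_0\cdot y+\chi_{{\rm per},p_0}(y)-C'$, i.e., $w(y)-p_0\cdot y$ \emph{bounded from below} — exactly the conclusion of the second case, not the first. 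What the paper does, and what your argument is missing, is to pick a threshold $E-\delta>\overline H(p_0)$ and, for each coordinate direction $e_i$, select \emph{two} slopes $\underline p_{i,\delta}<0<\overline p_{i,\delta}$ with $\overline H(\underline p_{i,\delta}e_i)=\overline H(\overline p_{i,\delta}e_i)=E-\delta$ (possible by convexity and coercivity of $\overline H$ because $E-\delta>\min\overline H$). Each of the $2d$ functions $y\mapsto \overline c_{i,\delta}+\overline p_{i,\delta}y_i+\chi_{{\rm per},\overline p_{i,\delta}e_i}(y)$ (and the analogues with $\underline p_{i,\delta}$) is a subsolution of $H\le E-\delta<E^R$ outside a fixed cube; shifting them below $w^R$ on the cube's boundary, the state-constrained comparison in the truncated annulus $\overline{B_R(0)}\setminus Q_{R_1}$ gives $w^R\ge$ each of them, hence $w\ge$ their max. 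That max grows like a positive multiple of $\|y\|_\infty$, which is what delivers \eqref{eq:31}; with slope $p_0$ you cannot see this linear growth. Your "alternative A" (translating $w$ to infinity and homogenizing to contradict $E>\overline H(p_0)$) is also incomplete: boundedness along a sequence $y_n$ neither produces a globally bounded solution nor pins down a homogenization limit of the translates, so the contradiction does not materialize without substantial extra work.
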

\begin{proof}
$\;$
  \paragraph{Step 1}
Let us start by proving the desired results in the case when  $p_0=0$.
\\
 With $R_0$ defined in Section \ref{sec:setting}, we already know that, for all $R>R_0$, the corrector $w^R$ solution to (\ref{eq:25})-(\ref{eq:26}), is Lipschitz continuous with a Lipschitz constant $C>0$ independent of $R$.

 For $R_1>R_0$, let $Q_{R_1}$ be the cube 
  \begin{displaymath}
Q_{R_1} =\{y\in \R^d,\;\|y\|_{\infty} \le R_1\}.
\end{displaymath}
It contains $\overline{B_{R_0}(0)}$. It is clear that, for all $R>R_0$ and  $y\in \partial Q_{R_1}$,
\begin{equation}
  \label{eq:32}
|w^R(y)|\le C \sqrt{d} R_1,
\end{equation}
using the facts that $w^R(0)=0$ and that $C$ is a Lipschitz constant for $w^R$.
We distinguish two cases:
\begin{enumerate}
\item  $E> \overline H(0)$.
\\ Let $(e_i)_{i=1,\dots, d}$ be the canonical basis of $\R^d$ associated with the system of coordinates $(y_i)_{i=1,\dots, d}$.
Since 
$E>\overline H(0)$, we may choose  $\delta$,  $0<\delta< E-\overline H(0)$.
Now, because of the convergence of $E^R$ to $E$, we have for $R$ large enough (that we can always suppose larger than $ \sqrt{d} R_1$, so that the ball $B_R(0)$ contains $\overline{Q_{R_1}}$),
 \begin{equation}
   \label{eq:33}
E\ge E^R> E- \delta.
 \end{equation}
Since $\overline H(0)= \min _{p}\overline H(p)$, the continuity and the coercivity of $\overline H$ together with $E-\delta> \overline H(0)$ imply that for all $i \in \{1,\dots, d\}$, there exists   $(\underline{p}_{i,\delta}, \overline{p}_{i,\delta})\in \R^2$,   $\underline{p}_{i,\delta}<0< \overline{p}_{i,\delta}$ such that 
\begin{equation}
  \label{eq:34}
\overline H (\underline{p}_{i,\delta} e_i)= \overline H (\overline{p}_{i,\delta} e_i)= E-\delta.
\end{equation}
Consider now the functions 
\begin{eqnarray}
  \label{eq:35}
\overline w_{i,\delta}(y)= \overline c_{i,\delta}+\overline{p}_{i,\delta} y_i +\chi_{{\rm{per}},  \overline{p}_{i,\delta} e_i} (y),
\\
\label{eq:36}
\underline w_{i,\delta}(y)= \underline c_{i,\delta}+\underline{p}_{i,\delta} y_i +\chi_{{\rm{per}},  \underline{p}_{i,\delta} e_i}(y).
\end{eqnarray}
where $\overline c_{i,\delta}$ and $ \underline c_{i,\delta}$ are scalars chosen such that, for any $y\in \partial Q_{R_1}$,
\begin{equation}
  \label{eq:37}
\overline w_{i,\delta}(y) < -C \sqrt{d} R_1 \le   w^R(y)\quad \hbox{and}\quad \underline w_{i,\delta}(y) < - C \sqrt{d} R_1 \le  w^R(y),
\end{equation}
recalling that $\overline{Q_{R_1}}\subset B_R(0)$.
Moreover, since $H$ coincides with $\Hper$ in $(\R^d \setminus\overline{Q_{R_1}})\times \R^d$, we see from \eqref{eq:9} that 
\begin{eqnarray}\label{eq:38}
  H(y,   D\overline w_{i,\delta}(y) )= E-\delta ,\\\label{eq:39}
  H(y,   D\underline w_{i,\delta}(y) )= E-\delta ,
\end{eqnarray}
in the sense of viscosity in $\R^d \setminus \overline{Q_{R_1}}$.\\
Since $w^R$ is a viscosity supersolution of (\ref{eq:26}), we may use a comparison principle in  $\overline {B_R(0)}\setminus Q_{R_1}$
and deduce from~(\ref{eq:33}) and (\ref{eq:37})-(\ref{eq:38})-(\ref{eq:39}) that  for $R$ large enough and for all $y\in \overline {B_R(0 )}\setminus Q_{R_1}$,
\begin{equation*}
  \overline w_{i,\delta}(y) \le w^R(y), \quad\hbox{and }\quad   \underline w_{i,\delta}(y) \le w^R(y).
\end{equation*}
To summarize, we have proved that, for all   $y\in \overline B_R(0 )\setminus {Q_{R_1}}$:
\begin{equation}
  \label{eq:40}
\max \left\{\overline w_{i,\delta}(y),  \underline w_{i,\delta}(y) ,\; i=1,\dots, d\right\} \le  w^R(y).
\end{equation}
By passing to the limit in $R$ (possibly after the extraction of a sequence), we deduce that  for any $y\in \R^d\setminus Q_{R_1}$,
\begin{equation}
  \label{eq:41}
\max \left\{\overline w_{i,\delta}(y),  \underline w_{i,\delta}(y) ,\; i=1,\dots, d\right\} \le  w(y).
\end{equation}
Since the correctors $\chi_{{\rm{per}},  \overline{p}_{i,\delta} e_i}$ are bounded functions, it is clear from \eqref{eq:35}-\eqref{eq:36} that there exists a constant $c$ such that for all $y\in \R^d\setminus Q_{R_1}$,
\begin{equation*}
  \begin{split}
 & \max \left\{\overline w_{i,\delta}(y),  \underline w_{i,\delta}(y) ,\; i=1,\dots, d\right\} \\ \ge & \min  \left\{|\overline p_{i,\delta}|,  |\underline p_{i,\delta}| ,\; i=1,\dots, d\right\} \|y\|_{\infty} - c.    
  \end{split}
\end{equation*}
This yields the desired result, namely (\ref{eq:31}) in the case when $p_0=0$ and $E>\overline H(0)$.
\item $E=\overline H(0)$. For any $c$, the function  $\underline{w}(y)= \chi_{{\rm per},0} (y)+c$ thus satisfies
  \begin{displaymath}
    H(y, D\underline{w})= E, \quad \hbox{ in } B_R(0)\setminus \overline Q_{R_1},
  \end{displaymath}
in the sense of viscosity. Moreover, it is possible to choose
 $c$  such that  $ \underline w(y)=\chi_{{\rm per},0} (y)+c <  -C \sqrt{d} R_1$ for all $y\in \partial Q_{R_1}$. By a comparison principle, it follows that
$ \underline w\le w^R$ in $ \overline{B_R(0)}\setminus  Q_{R_1}$. 
By passing to the limit in $R$ (possibly after the extraction of a sequence), we deduce that $w$ is bounded from below, which concludes the proof of Proposition \ref{sec:ergod-const-glob-2} when $p_0=0$.
\end{enumerate}

  \paragraph{Step 2}
We need to prove the result for a general $p_0$.  The idea consists of  suitably shifting the Hamiltonians. More explicitly, we consider the new running  costs and Hamiltonians:
 \begin{eqnarray}\label{eq:42}
   \widetilde \ell (x,a)&=& \ell(x,a)+p_0\cdot f(x,a),\\\label{eq:43}
   \widetilde \ellper (x,a)&=& \ellper(x,a)+p_0\cdot \fper(x,a),\\\label{eq:44}
 \widetilde H(x,p)&= & H(x,p+p_0)=  \sup_{a\in A} \left( -p\cdot f(x,a)  -  \widetilde \ell (x,a)\right),\\\label{eq:45}
 \widetilde {\Hper}(x,p)&= & \Hper(x,p+p_0)=  \sup_{a\in A}  \left(-p\cdot \fper(x,a)  -  \widetilde \ellper (x,a)\right),
 \end{eqnarray}
 which satisfy all the  stuctural assumptions made on $\ell$, $\ellper$, $H$ and $\Hper$ in Section~\ref{sec:setting}.
 \\
 It is clear that the new effective Hamiltonian $\overline {\widetilde H}(p)= \overline H(p+p_0)$ 
 is obtained from $\widetilde {\Hper}$ by solving the shifted cell problem: 
 \begin{equation}
 \label{eq:46}
 \Hper(y, p_0+p+ D\widetilde \chiperp)=\overline {\widetilde H}(p) \quad \hbox{in }\R^d,
 \end{equation}
 the solutions of which are of the form $ \widetilde \chiperp= \chi_{\rm{per}, p+p_0}$, where $ \chi_{\rm{per}, p+p_0}$
  is a solution of the original periodic  cell problem (\ref{eq:9}) associated to $p+p_0$. Note that $0\in\argmin \overline {\widetilde H}$.\\
  It is straightforward to realize that $\widetilde w^R(y)= w^R(y)-p_0\cdot y$ is a viscosity solution of  the state constrained problem:
  \begin{eqnarray}
\label{eq:47}
\widetilde H(y, D\widetilde w^{ R}) &\le& E^R \quad \hbox{ in } B_R(0),\\
\label{eq:48}
 \widetilde H(y, D\widetilde w^{R}) &\ge& E^R \quad \hbox{ in } \overline {B_R(0)}.
\end{eqnarray}
and that $\widetilde w^R$ tends to $\widetilde w(y)= w(y) -p_0 \cdot y$ locally uniformly.
We can apply the results proven above to $\widetilde H$ and  $\widetilde w$ because $ \overline {\widetilde H}$ reaches its minimal value at $0$. This yields the desired result in the general case.
\end{proof}

\subsection{The function $ \overline  u$ is a subsolution of (\ref{eq:11}) and satisfies (\ref{eq:12})-(\ref{eq:13})}\label{sec:function--overline}

\begin{proposition}
  \label{sec:ergod-const-glob-1}
The upper limit $\overline u$ satisfies (\ref{eq:12}).
\end{proposition}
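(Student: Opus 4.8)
The plan is to establish directly the pointwise bound $\overline u(0)\le -E/\alpha$, which is precisely \eqref{eq:12}, by a perturbed test-function argument combined with the comparison principle for state-constrained problems. Fix $R>R_0$ and recall from Section~\ref{sec:ergod-const-state} the truncated corrector $w^R\in C(\overline{B_R(0)})$: it satisfies $w^R(0)=0$, it is Lipschitz continuous with a constant $C$ independent of $R$, and it is a viscosity solution of \eqref{eq:25}--\eqref{eq:26}. I would introduce, for small $\varepsilon>0$, the function
\[
  \psi_\varepsilon^R(x)\;:=\;-\frac{E^R}{\alpha}\;+\;\varepsilon\, w^R\!\Bigl(\frac x\varepsilon\Bigr)\;+\;CR\,\varepsilon,\qquad x\in\overline{B_{\varepsilon R}(0)},
\]
defined on the \emph{shrinking} closed ball $\overline{B_{\varepsilon R}(0)}$. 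The first step is to check that $\psi_\varepsilon^R$ is a supersolution, in the state-constrained sense, of $\alpha v+H(x/\varepsilon,Dv)=0$ in $\overline{B_{\varepsilon R}(0)}$. Indeed, the map $x\mapsto \varepsilon\,w^R(x/\varepsilon)$ inherits from \eqref{eq:26} the inequality $H\bigl(x/\varepsilon,D[\varepsilon w^R(x/\varepsilon)]\bigr)\ge E^R$ on the whole of $\overline{B_{\varepsilon R}(0)}$ (including its boundary), while $w^R(0)=0$ and the uniform Lipschitz bound give $\bigl|\varepsilon\,w^R(x/\varepsilon)\bigr|\le CR\,\varepsilon$ there; hence $\alpha\,\psi_\varepsilon^R+H(x/\varepsilon,D\psi_\varepsilon^R)\ge \alpha\varepsilon\,w^R(x/\varepsilon)+\alpha CR\,\varepsilon\ge 0$.

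Next I would invoke the comparison principle for state-constrained problems recalled for \eqref{eq:19}--\eqref{eq:20} (see \cite{MR838056,MR951880}), whose applicability is guaranteed by the coercivity of $H$ and the strong controllability assumption $B_{r_f}(0)\subset\{f(x,a),a\in A\}$: since $u_\varepsilon$ is a viscosity subsolution of $\alpha u+H(x/\varepsilon,Du)=0$ in the open ball $B_{\varepsilon R}(0)$ and $\psi_\varepsilon^R$ is a state-constrained supersolution of the same equation in $\overline{B_{\varepsilon R}(0)}$, one gets $u_\varepsilon\le \psi_\varepsilon^R$ on $\overline{B_{\varepsilon R}(0)}$. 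Evaluating at $x=0$ and using $w^R(0)=0$ yields $u_\varepsilon(0)\le -E^R/\alpha+CR\,\varepsilon$. Letting $\varepsilon\to0$ and recalling that $\overline u(0)=\limsup_{\varepsilon\to0}u_\varepsilon(0)$, we obtain $\overline u(0)\le -E^R/\alpha$. Finally, letting $R\to\infty$ and using the monotone convergence $E^R\uparrow E$ from \eqref{eq:27}--\eqref{eq:28} gives $\alpha\,\overline u(0)+E\le 0$, which is \eqref{eq:12}. The same estimate admits a control-theoretic reading: restricting in \eqref{eq:6} the trajectories issued from $0$ to those that never leave $\overline{B_{\varepsilon R}(0)}$ and rescaling time by $\varepsilon$ gives $u_\varepsilon(0)\le \varepsilon\, w^{\alpha\varepsilon,R}(0)$, and $\alpha\varepsilon\, w^{\alpha\varepsilon,R}(0)\to -E^R$ as $\varepsilon\to0$; one then lets $R\to\infty$.

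The computation is short, so the only real subtlety lies in why it is legitimate to work on the ball $B_{\varepsilon R}(0)$, which shrinks to $\{0\}$ and on whose boundary we have no a priori relation between $u_\varepsilon$ and $\psi_\varepsilon^R$: this is harmless precisely because the supersolution inequality \eqref{eq:26} for $w^R$ holds up to and including $\partial B_R(0)$, i.e.\ $\psi_\varepsilon^R$ is a genuine \emph{state-constrained} supersolution, which is exactly the ingredient that makes the one-sided comparison go through without any boundary data. Keeping the error term $CR\,\varepsilon$ explicit and using that the Lipschitz constant of $w^R$ is independent of $R$ is what allows the two limits to be taken in the order $\varepsilon\to0$ then $R\to\infty$.
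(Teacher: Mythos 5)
Your proof is correct and rests on the same two pillars as the paper's: the truncated corrector $w^R$ from the state-constrained cell problem, and the comparison principle for state-constrained Hamilton--Jacobi equations on the shrinking ball $\overline{B_{\varepsilon R}(0)}$. The only difference is in the bookkeeping: the paper argues by contradiction, assuming $\alpha\,\overline u(0)+E=\theta>0$ and carving the margin $\theta$ into $\theta/4$ pieces to absorb the two error terms $E^R-E$ and $\alpha\varepsilon\,w^R(\cdot/\varepsilon)$ (fixing $R$ first, then $\varepsilon$), whereas you argue directly by centering the perturbed test function at $-E^R/\alpha$ and building the explicit padding $CR\,\varepsilon$ into $\psi_\varepsilon^R$, so that comparison yields the clean quantitative bound $u_\varepsilon(0)\le -E^R/\alpha + CR\,\varepsilon$ and the two limits $\varepsilon\to0$, then $R\to\infty$, pass through unconditionally. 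Your version avoids the case split and is also the one most naturally read off the control-theoretic inequality $u_\varepsilon(0)\le\varepsilon\,w^{\alpha\varepsilon,R}(0)$ you note at the end. Both arguments hinge on the same structural fact, which you correctly single out: the supersolution inequality \eqref{eq:26} for $w^R$ holds on the \emph{closed} ball, which is exactly what licenses the one-sided comparison without any boundary data on $\partial B_{\varepsilon R}(0)$.
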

\begin{proof}
  Let us proceed by contradiction and assume that 
  \begin{equation}
    \label{eq:49}
 \alpha \overline u(0) +E = \theta >0.
  \end{equation}
 Using $w^R$ defined in paragraph~\ref{sec:ergod-const-glob} (recall  $w^R (0)=0$),
we define
\begin{displaymath}
  \phi^{\varepsilon, R}= \overline u (0)+ \varepsilon w^R (\frac x \varepsilon).
\end{displaymath}
We deduce  from (\ref{eq:26}) that $ \phi^{\varepsilon, R}$ is a viscosity supersolution of 
 \begin{displaymath}
  \alpha \phi^{\varepsilon, R}(x) + H\left(\frac x \varepsilon, D \phi^{\varepsilon, R}  \right)\ge \alpha \varepsilon w^R\left(\frac x \varepsilon\right)+E^R-E+\theta\quad \hbox{in  } \overline{B_{\varepsilon R}(0)}.
\end{displaymath}
There exists $r>0$ such that  $E^R - E\ge -\frac \theta 4$ for any $R\ge r$. Let us fix such a value of $R$. 

Having fixed $R$, we see that  for $\varepsilon_0$ sufficiently small and any $\varepsilon$ such that $0<\varepsilon< \varepsilon_0$,
\begin{displaymath}
  \alpha \varepsilon w^{R}\left(y\right) \ge -\frac \theta 4 \quad \hbox{for any } y\in \overline{B_R(0)}.
\end{displaymath}
We deduce that, for any $\varepsilon<\varepsilon_0$,
\begin{displaymath}
  \alpha \phi^{\varepsilon, R} + H\left(\frac x \varepsilon, D \phi^{\varepsilon, R}  \right)\ge
 \frac \theta 2 \quad \hbox{in  } \overline {B_{\varepsilon R}(0)}.
\end{displaymath}

Next, using (\ref{eq:17}), consider a vanishing sequence $0<\varepsilon_n<\varepsilon_0$ such that  $u_ {\varepsilon_n}(0)$ tends to $\overline u (0)$.
We know that $u_ {\varepsilon_n}$ satisfies in the sense of viscosity
\begin{displaymath}
  \alpha \, u_ {\varepsilon_n} + H\left(\frac x {\varepsilon_n}, Du_ {\varepsilon_n} \right)\le
  0 \quad \hbox{in  } B_{\varepsilon_n R}(0).
\end{displaymath}
The comparison principle for Hamilton-Jacobi equations with state constraints,  see \cite{MR838056,MR951880} and \cite[ Th. 5.8, Chapter IV, page 278]{MR1484411}, then implies that
\begin{displaymath}
  \phi^{{\varepsilon_n}, R}- \frac  \theta {2 \alpha}\ge  u_ {\varepsilon_n}  \quad \hbox{in  } B_{\varepsilon_n R}(0),
\end{displaymath}
or, put differently,
\begin{displaymath}
   \overline u (0)+ {\varepsilon_n} w^R (\frac x {\varepsilon_n})- \frac  \theta {2 \alpha}\ge  u_ {\varepsilon_n}(x)  \quad \hbox{for  any } x\in  B_{\varepsilon_n R}(0).
\end{displaymath}
Taking $x=0$ and letting $n$ tend to $+\infty$ yields $  \overline u (0)- \frac  \theta {2 \alpha}\ge \overline u(0)$, the desired contradiction.
\end{proof}

The next proposition states that $\overline u$  is a viscosity subsolution of (\ref{eq:11}) (which is already known from Proposition~\ref{sec:homog-refeq:7-1}) and satisfies condition (\ref{eq:13}).

\begin{proposition}\label{sec:function--overline-4}
  The function $\overline u$ is a viscosity subsolution of $\alpha v+ \overline H (Dv )\le 0$ in the whole space $\R^d$.
\end{proposition}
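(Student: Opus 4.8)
The goal is to show that $\overline u$ is a viscosity subsolution of $\alpha v + \overline H(Dv) \le 0$ in all of $\R^d$, including the origin. We already know from Proposition~\ref{sec:homog-refeq:7-1} that $\overline u$ is a subsolution in $\R^d \setminus \{0\}$, so the only new content is the subsolution property \emph{at} the origin, i.e. condition \eqref{eq:13}: if $\phi \in C^1(\R^d)$ and $\overline u - \phi$ has a local maximum at $0$, then $\alpha\,\overline u(0) + \overline H(D\phi(0)) \le 0$.

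The plan is to combine Proposition~\ref{sec:ergod-const-glob-1}, which gives $\alpha\,\overline u(0) + E \le 0$, with the inequality $E \ge \overline H(p_0) = \min_q \overline H(q)$ from \eqref{eq:30}, and the convexity/continuity of $\overline H$. Concretely, I would argue as follows. Suppose $\overline u - \phi$ has a local maximum at $0$ with $p := D\phi(0)$. Since $\overline u$ is Lipschitz with a known constant $L$ (established right after \eqref{eq:18}), the test function gradient at the origin necessarily satisfies $|p| \le L$; more importantly, I want to reduce to the case where $\overline H(p) \ge \overline H(p_0)$, which is automatic since $p_0$ is a global minimizer of $\overline H$. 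The subtle point is that \eqref{eq:13} asserts something involving $\overline H(p)$ at a \emph{specific} $p$, whereas Proposition~\ref{sec:ergod-const-glob-1} only gives the bound with $E$. So the argument splits depending on whether $\overline H(p)$ is large or small.

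If $\overline H(p) \le E$, then $\alpha\,\overline u(0) + \overline H(p) \le \alpha\,\overline u(0) + E \le 0$ by Proposition~\ref{sec:ergod-const-glob-1}, and we are done. The remaining case is $\overline H(p) > E \ge \overline H(p_0)$. Here I would use the subsolution property in the punctured space together with a continuity/stability argument: since $\overline u$ is a subsolution of $\alpha v + \overline H(Dv) \le 0$ in $\R^d \setminus \{0\}$ and $\overline u - \phi$ has a local max at $0$, one can perturb $\phi$ slightly and evaluate at points near but distinct from the origin. Precisely, for small $\eta > 0$ consider $\phi_\eta(x) = \phi(x) + \eta |x|^2$; then $\overline u - \phi_\eta$ has a strict local max at $0$, and there are points $x_k \to 0$, $x_k \ne 0$, where $\overline u - \phi_\eta$ has a local max (this is the standard doubling-type localization). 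At such points the subsolution inequality \eqref{eq:11} gives $\alpha\,\overline u(x_k) + \overline H(D\phi_\eta(x_k)) \le 0$; letting $k \to \infty$ and then $\eta \to 0$, using continuity of $\overline H$ and of $\overline u$, yields $\alpha\,\overline u(0) + \overline H(p) \le 0$. This shows \eqref{eq:13} holds regardless, and in fact shows the subsolution property does not genuinely ``see'' the origin as a special point for the $\limsup$.

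The main obstacle is the localization step in the punctured domain: one must ensure that a function which has a local maximum at an isolated excluded point $0$, when perturbed to have a \emph{strict} max there, also has nearby maxima at interior points of $\R^d \setminus \{0\}$ where the PDE is available. With $d \ge 2$ the punctured space is connected and this is routine; for $d = 1$ one should be slightly careful that approaching maxima exist on at least one side, but Lipschitz continuity and the strict maximum from the $\eta|x|^2$ perturbation handle this. An alternative, perhaps cleaner, route avoiding localization entirely is simply to observe that the first case above ($\overline H(p) \le E$) combined with the trivial bound $\overline H(p) \le \overline H(p)$ is insufficient, so one genuinely needs the punctured-domain subsolution property; hence I would present the proof as: ``the subsolution property at the origin follows from Proposition~\ref{sec:homog-refeq:7-1} by the standard fact that removing a single point does not affect the subsolution property of a continuous function'' — this is the quickest correct argument, and I would spell out that one-line reduction, citing the perturbed-test-function localization as justification.

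\begin{proof}
By Proposition~\ref{sec:homog-refeq:7-1}, $\overline u$ is a viscosity subsolution of \eqref{eq:11} in $\R^d\setminus\{0\}$; it therefore suffices to check the subsolution inequality at the origin, namely \eqref{eq:13}. Let $\phi\in C^1(\R^d)$ be such that $\overline u-\phi$ has a local maximum at $0$, and set $p=D\phi(0)$. If $\overline H(p)\le E$, then Proposition~\ref{sec:ergod-const-glob-1} gives directly
\begin{displaymath}
\alpha\,\overline u(0)+\overline H(p)\le \alpha\,\overline u(0)+E\le 0.
\end{displaymath}
Assume now that $\overline H(p)>E\ge \overline H(p_0)$. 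For $\eta>0$, the function $\overline u-\phi-\eta|\cdot|^2$ has a strict local maximum at $0$; since $\overline u$ is continuous and $\R^d\setminus\{0\}$ is (for $d\ge 2$) a connected open neighborhood of points arbitrarily close to $0$, by the usual localization of maxima there exist points $x_k\in\R^d\setminus\{0\}$, $x_k\to 0$, at which $\overline u-\phi-\eta|\cdot|^2$ attains a local maximum (for $d=1$ one obtains such $x_k$ on at least one side of the origin, using the strictness of the maximum and the Lipschitz bound on $\overline u$). Applying the subsolution property \eqref{eq:11} of $\overline u$ at $x_k$ with the test function $\phi(\cdot)+\eta|\cdot|^2$ yields
\begin{displaymath}
\alpha\,\overline u(x_k)+\overline H\bigl(D\phi(x_k)+2\eta x_k\bigr)\le 0.
\end{displaymath}
Letting $k\to\infty$ and using the continuity of $\overline u$, of $D\phi$, and of $\overline H$ (recall \eqref{eq:4} transfers to $\overline H$), we get $\alpha\,\overline u(0)+\overline H(D\phi(0))\le 0$, i.e. \eqref{eq:13}. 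In either case \eqref{eq:13} holds, so $\overline u$ is a viscosity subsolution of $\alpha v+\overline H(Dv)\le 0$ in all of $\R^d$.
\end{proof}
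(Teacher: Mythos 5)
Your case split is a reasonable instinct, and Case~1 ($\overline H(D\phi(0))\le E$) is correctly handled using Proposition~\ref{sec:ergod-const-glob-1}. But Case~2 contains a genuine gap. You claim that since $\overline u-\phi-\eta|\cdot|^2$ has a \emph{strict} local maximum at $0$, there must be points $x_k\to 0$, $x_k\ne 0$, at which this same function attains a local maximum. This is false: strictness of the maximum at $0$ means precisely that the function is \emph{strictly smaller} at every nearby point, so there is no reason for any interior local maximum to exist near $0$ (take $\overline u-\phi-\eta|\cdot|^2=-|x|^2$ to see this). You seem to be conflating this with the stability of maxima under uniform perturbation of the function, or with the localization one uses after doubling variables — neither of which applies here. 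If instead you try to take the maximum over an annulus $\overline{B_r(0)}\setminus B_\rho(0)$ and send $\rho\to 0$, the max sits on the inner boundary $|x|=\rho$, where the viscosity inequality is unavailable. Your fallback claim that ``removing a single point does not affect the subsolution property of a continuous function'' is likewise not a general fact; isolated-point removability for viscosity subsolutions needs additional structure (Lipschitz regularity and convexity of the Hamiltonian), not just continuity.

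The paper's actual proof is short and avoids any localization: since $\overline u$ is Lipschitz on all of $\R^d$ and is a viscosity subsolution of $\alpha v+\overline H(Dv)\le 0$ in $\R^d\setminus\{0\}$, it satisfies $\alpha\,\overline u(x)+\overline H(D\overline u(x))\le 0$ at almost every $x\in\R^d$ (by \cite[Prop.~1.9, Ch.~I]{MR1484411}, a single point having measure zero), and then the convexity of $\overline H$ upgrades the a.e.\ inequality to the viscosity subsolution property on the whole of $\R^d$ (\cite[Prop.~5.1, Ch.~II]{MR1484411}). So both ingredients — Lipschitz continuity of $\overline u$ and convexity of $\overline H$ — are essential, and neither Proposition~\ref{sec:ergod-const-glob-1} nor $E$ is used. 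You should discard the Case~2 argument and replace it (and in fact the whole case split) with this a.e.\ $+$ convexity argument.
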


\begin{proof}
  We know from  Proposition~\ref{sec:homog-refeq:7-1} that $\overline u$  is Lipschitz continous in $\R^d$. Hence, since $\overline u$ is a viscosity subsolution of $\alpha v+ \overline H (Dv )\le 0$ in $\R^d\setminus \{0\}$, it satisfies $\alpha \overline u(x)+ \overline H (D\overline u(x) )\le 0$ at almost every $x\in \R^d$, see \cite[Prop. 1.9, Chapter I, page 31]{MR1484411} and its proof. 
But $\overline H$ is convex. Therefore, from \cite[Prop. 5.1, Chapter II, page 77]{MR1484411}, $\overline u$ is a viscosity subsolution of $\alpha v+ \overline H (Dv )\le 0$ in the whole space $\R^d$.
\end{proof}

\subsection{The function $ \underline  u$ is a supersolution of  (\ref{eq:11}) and satisfies (\ref{eq:14})}\label{sec:function--underline}
We already know from Proposition~\ref{sec:homog-refeq:7-1}  that  $ \underline  u$ is a bounded supersolution of  (\ref{eq:11}).

Let $\phi\in C^1(\R^d)$ be such that $\underline u -\phi$ has a local minimum at the origin.  We wish to prove that
$ \alpha \underline u(0)+\max\left( E, \overline H(D\phi(0))\right)\ge 0$. The proof differs depending whether  $ \overline H(D\phi(0))>E$ or 
$\overline H(D\phi(0))\le E$. It is based on control theoretic arguments and  partly inspired from the ideas proposed by P-L Lions and P. Souganidis when they dealt with the case  $E= \overline H (p_0)$, see \cite{PLL-college}. However,  various new  arguments, reminiscent of those used in \cite{MR3299352}, will be needed to address the more difficult  case $E> \overline H (p_0)$.

Let us start by considering $p\in \R^d$ such that $ \overline H(p)>E$. It is obvious that $p\not =p_0$ because $E\ge \overline H (p_0)=\min_{q}\overline H (q)$.
From the convexity and the coercivity of $\overline H$,  we see that there exists a unique unit vector $e$ colinear to $p-p_0$ such that $\R\ni t\mapsto \overline H (p+te)$ is strictly decreasing in a neighborhood of $t=0$, and a unique vector $\tilde p$ such that
\begin{enumerate}
\item $\tilde p-p_0$ is colinear to $p-p_0$
\item $\overline H(\tilde p)=\overline H (p)$
\item $t\mapsto \overline H (\tilde p+te)$ is strictly increasing in a neighborhood of $t=0$.
\end{enumerate}
We easily check that $(p-p_0)\cdot e<0$ and that $(\tilde p-p_0)\cdot e>0$.

The following observation will be useful for proving that $\underline u$  satisfies (\ref{eq:14}).
\begin{remark}\label{sec:function--underline-3}
Note that if the function $x\mapsto  \underline u(x) - \underline u(0) - p\cdot x $ has a local minimum at $x=0$, then the function 
$x\mapsto  \underline u(x) - \underline u(0) - \min ( p\cdot x, \tilde p \cdot x)$ also has a local minimum at $x=0$. 
\end{remark}

We now assume that, for any $\lambda>0$ and $y\in \R^d$,  there exists an optimal trajectory $(z_\lambda, a_\lambda)$  of the optimal control problem 
\begin{displaymath}
 \chiperp^\lambda (y)=  \inf_{z,a} \left\{
 \int_0^\infty  e^{-\lambda t} \Bigl(  \ellper(z(t), a(t)) +   p\cdot \fper(z(t), a(t)) 
  \Bigr)
 dt 
  \right\},
\end{displaymath}
subject to
\begin{eqnarray}\label{eq:50}
  z'(t)&=& \fper(z(t),a(t)), \quad  a(t)\in A \; \hbox{ for almost } t>0,\\\label{eq:51}
  z(0)&=& y.
\end{eqnarray}
This assumption on the existence of such an optimal trajectory is made for simplicity. In the full generality, we may find trajectories as close to optimal as needed   
and what follows remains true with such trajectories.

The value function $ \chiperp^\lambda$  is a continuous function defined on $\R^d/\Z^d$; let $\langle \chiperp^\lambda \rangle$ stand for its mean value. It is well known, see \cite{LPV}, that  $-\lambda \langle \chiperp^\lambda \rangle $ tends to $\overline H (p)$ as $\lambda\to 0$ and that 
 after the extraction of a sequence,  $y\mapsto \chiperp^\lambda (y) -\langle \chiperp^\lambda \rangle $ tends to $\chiperp$ uniformly, where $\chiperp$ is a corrector associated with $p$ for the periodic homogenization problem. We recall that in periodic homogenization, the corrector  may not be unique, even up to the addition of a constant.

Similarly, let us assume that there exists $(\tilde z_\lambda, \tilde a_\lambda)$, an optimal trajectory for the optimal control problem 
\begin{displaymath}
 \chi_{{\rm per}, \tilde p}^\lambda (y)=  \inf_{z,a} \left\{
 \int_0^\infty  e^{-\lambda t} \left(  \ellper(z(t), a(t)) +   \tilde p\cdot \fper(z(t), a(t)) 
\right)
 dt 
  \right\},
\end{displaymath}
subject to (\ref{eq:50})-(\ref{eq:51}).

The next lemma gives a uniform upper bound on the time spent by the optimal trajectories $z_{\lambda_n}$
in the half-space  $\{x\in \R^d:\;  e\cdot x \le r\}$ for $r>0$,  as $\lambda_n$ converges to $0$, and a symmetric estimate concerning
 $\tilde z_{\lambda_n}$. This lemma will be used in the proof of  Proposition \ref{sec:function--underline-1} below.

  \begin{lemma}
    \label{sec:function--underline-4}
If $\overline H (p)> E$,  let $(\lambda_n)_{n\in \N}$ and $(\tilde \lambda_n)_{n\in \N}$ be sequences converging to $0$ such that
 $y\mapsto \chiperp^{\lambda_n} (y) -\langle \chiperp^{\lambda_n} \rangle $ converges to $\chiperp$ uniformly and 
$y\mapsto \chi_{{\rm per},\tilde p}^{\tilde \lambda_n} (y) -\langle  \chi_{{\rm per},\tilde p}^{\tilde \lambda_n} \rangle $ converges to $\chi_{{\rm per},\tilde p}$ uniformly. There exists  a positive constant $c$  that only depends on $\overline H (p)$, such that for any $T>0$, there exist subsequences still denoted by $(\lambda_n)_{n\in \N}$ and $(\tilde \lambda_n)_{n\in \N}$
satisfying the following properties: for the unit vector $e$ introduced in the beginning of Section~\ref{sec:function--underline},
\begin{equation*}
 e\cdot (z_{\lambda_n}(t)-y)\ge c(t-1)\quad  \hbox{ and  } \quad e\cdot (\tilde z_{\lambda_n}(t)-y)\le -c(t-1),
\end{equation*}
 for all $n$  and $t\in [0,T]$.       
  \end{lemma}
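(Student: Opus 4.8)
The plan is to show that the optimal trajectory $z_{\lambda_n}$ for the cell problem associated with $p$ must drift in the $+e$ direction at a linear rate, because the alternative — spending too much time in the half-space $\{e\cdot x\le r\}$ — would produce a value strictly smaller than $-\lambda_n\langle\chiperp^{\lambda_n}\rangle\to\overline H(p)$, contradicting convergence of the ergodic constants. I would argue by contradiction: suppose that along a subsequence there are times $t_n\in[0,T]$ with $e\cdot(z_{\lambda_n}(t_n)-y)$ small (say $\le c(t_n-1)$ fails), and use this to build a better competitor. The key comparison object is the corrector $w$ constructed in \eqref{eq:29}, together with Proposition~\ref{sec:ergod-const-glob-2}: when $\overline H(p)>E$ we have $E\ge\overline H(p_0)$ but crucially the state-constrained value near the defect is governed by $E$, which is \emph{strictly} below $\overline H(p)$. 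A trajectory that lingers in $\{e\cdot x\le r\}$ can be ``redirected'' toward the defect region, where the running cost accumulates at effective rate $\le E<\overline H(p)$, yielding a value function estimate incompatible with $\chiperp^{\lambda_n}(y)-\langle\chiperp^{\lambda_n}\rangle\to\chiperp(y)$ and $-\lambda_n\langle\chiperp^{\lambda_n}\rangle\to\overline H(p)$.

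More concretely, the steps I would carry out are: (i) Record the dynamic programming principle for $\chiperp^\lambda$ along its own optimal trajectory, so that for any $s>0$,
\begin{displaymath}
\chiperp^\lambda(y)=\int_0^s e^{-\lambda t}\bigl(\ellper(z_\lambda,a_\lambda)+p\cdot\fper(z_\lambda,a_\lambda)\bigr)\,dt+e^{-\lambda s}\chiperp^\lambda(z_\lambda(s)).
\end{displaymath}
(ii) Using $|z_\lambda'|\le M_f$ and local strong controllability (the ball $B_{r_f}(0)\subset\{f(x,a)\}$), observe that if $e\cdot(z_{\lambda_n}(t)-y)$ stays below $c(t-1)$ on a set of times of length bounded below, one can splice in a modified control that moves the state a fixed distance in the $+e$ direction (or toward the origin) over a controlled time, changing the cost by a bounded amount. (iii) Quantify the gain: the periodic homogenization formula says that over a long window in the periodic region, the averaged cost is $\approx -\overline H(p)\cdot(\text{time})$ while a window spent being pulled through the defect-influenced region costs at effective rate $\approx -E\cdot(\text{time})$; since $-E>-\overline H(p)$, this is a strict improvement of order (length of window), contradicting optimality once $\lambda_n$ is small enough. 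Choosing the constant $c$ depending only on $\overline H(p)$ (through the gap $\overline H(p)-E$ and through $M_f,r_f$) and passing to a further subsequence by a diagonal argument over rational $t\in[0,T]$ then upgrades the pointwise bound to the stated inequality on all of $[0,T]$. The symmetric statement for $\tilde z_{\lambda_n}$ follows verbatim, replacing $p$ by $\tilde p$ and $e$ by $-e$, using that $(\tilde p-p_0)\cdot e>0$ while $(p-p_0)\cdot e<0$, so the monotonicity of $t\mapsto\overline H(\tilde p+te)$ is reversed and the trajectory is forced to drift in the $-e$ direction.

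The main obstacle I anticipate is making the ``redirection toward the defect'' argument quantitative and uniform in $\lambda_n$: one must control the cost of the detour (a bounded perturbation over bounded time, hence negligible after multiplication by $e^{-\lambda t}$ and as $\lambda\to0$ this contributes nothing to the ergodic constant) \emph{and} simultaneously guarantee that the time spent in the favorable region is long enough to produce a strict gain beating any fixed threshold. This is where the hypothesis $\overline H(p)>E$ is essential and where Proposition~\ref{sec:ergod-const-glob-2} enters: the corrector $w$ grows like $p_0\cdot y$ plus something tending to $+\infty$ (or bounded below in the borderline case), so $w$ serves as a subsolution barrier showing that any trajectory is ``cheap'' precisely when it visits a neighborhood of the origin at effective slope $\le E$. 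A secondary technical point is that the optimal trajectories $z_\lambda$ live on the torus whereas the drift estimate is stated in $\R^d$; one works with a lift of the trajectory to $\R^d$, and the estimate $e\cdot(z_{\lambda_n}(t)-y)\ge c(t-1)$ refers to that lift, which is consistent since $\fper$ and $\ellper$ are $\Z^d$-periodic and $p\cdot\fper$, $\tilde p\cdot\fper$ transform correctly under integer translations.
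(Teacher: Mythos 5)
Your proposal misreads what the lemma is actually about, and the strategy you sketch would not close. The trajectories $z_\lambda$ and $\tilde z_\lambda$ are, by their definition just before the lemma, optimal for the \emph{periodic} discounted cell problems
\begin{displaymath}
 \chiperp^\lambda (y)=  \inf_{z,a}
 \int_0^\infty  e^{-\lambda t} \Bigl(  \ellper(z(t), a(t)) +   p\cdot \fper(z(t), a(t))\Bigr) dt ,
\qquad z'=\fper(z,a),\ z(0)=y,
\end{displaymath}
in which neither $\ell_0$ nor the perturbed dynamics $f$ appears. There is therefore no ``defect region'' to redirect the trajectory toward, and the quantities you bring in — the corrector $w$, Proposition~\ref{sec:ergod-const-glob-2}, the state-constrained ergodic constant $E$ as a ``favorable effective rate'' — are simply not part of the control problem that $z_\lambda$ solves. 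The estimate you want is a statement about the periodic homogenization layer, not the perturbed one; invoking $E$ as a lower running cost available near the origin has no meaning on the torus. As a result, your step (iii) (``quantify the gain'') has no competitor to compare against, and the contradiction you aim for cannot be produced.

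The paper's argument is different and entirely periodic. One fixes $q=p_0+\pi_0(p-p_0)$ with $0<\pi_0<1$ so that $\overline H(q)<\overline H(p)$, then applies the dynamic programming principle along the \emph{same} trajectory $(z_\lambda,a_\lambda)$: it is optimal for the $p$-problem (equality) and admissible but suboptimal for the $q$-problem (inequality). Subtracting the two relations, Taylor-expanding $e^{-\lambda s}$, and using the three uniform convergences
$\chi_{{\rm per}, p}^\lambda - \langle \chi_{{\rm per},p}^\lambda\rangle \to \chi_{{\rm per},p}$,
$\chi_{{\rm per}, q}^\lambda - \langle \chi_{{\rm per},q}^\lambda\rangle \to \chi_{{\rm per},q}$, and
$\lambda(\chi_{{\rm per},q}^\lambda-\chi_{{\rm per},p}^\lambda)\to \overline H(p)-\overline H(q)$
yields, along a subsequence and for $t\in[0,T]$,
\begin{displaymath}
(p-q)\cdot\bigl(z_{\lambda_n}(t)-y\bigr)\ \le\ \bigl(\overline H(q)-\overline H(p)\bigr)\,t + c_1,
\end{displaymath}
with $c_1$ depending only on $\overline H(p)$. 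Since $p-q$ is a negative multiple of $e$, this is exactly the lower bound $e\cdot(z_{\lambda_n}(t)-y)\ge c(t-1)$; the statement for $\tilde z_{\tilde\lambda_n}$ follows symmetrically. The hypothesis $\overline H(p)>E(\ge\overline H(p_0))$ is used only to guarantee $p\ne p_0$, so that $e$ is well-defined and an intermediate $q$ with $\overline H(q)<\overline H(p)$ exists; it is not used as a ``cheap-rate-near-the-defect'' input as in your plan. Your step (i) (the DPP identity along the optimal trajectory) is correct and coincides with the paper's starting point, but the rest of your route would have to be replaced by this two-value-function comparison.
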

  \begin{proof}
We will omit the index $n$, except at the end of the proof.
    We focus on the first assertion (concerning $z_\lambda$) since the proof of the second assertion (on $\tilde z_\lambda$) is similar.
Since $\overline H (p_0) \le E<\overline H (p)$, there exists $q = p_0 +\pi_0 (p-p_0)$, with $0<\pi_0 < 1$ such that $E< \overline H (q)<\overline H (p)$. The trajectory $(z_ \lambda,  a_\lambda)$ is then strictly suboptimal for the optimal control problem :
\begin{displaymath}
 \chi_{{\rm per}, q}^\lambda (y)=  \inf_{z,a} \left\{
 \int_0^\infty  e^{-\lambda t} \Bigl(  \ellper(z(t), a(t)) +   q\cdot \fper(z(t), a(t)) 
  \Bigr)
 dt 
  \right\},
\end{displaymath}
subject to (\ref{eq:50})-(\ref{eq:51}).
We obtain that 
\begin{eqnarray}
  \label{eq:chiperplambda}
  \chi_{{\rm per}, p}^\lambda (y) &=&  \ds \int_0 ^{t} e^{-\lambda s} \Bigl(   \ellper(z_ \lambda(s), a_ \lambda(s)) +   p\cdot \fper(z_ \lambda(s), a_ \lambda(s)
                                      \Bigr ) ds \\ \notag & & \ds +   e^{-\lambda t} \chi_{{\rm per}, p}^\lambda (z_\lambda(t)), \\
   \label{eq:chiperqlambda}
  \chi_{{\rm per}, q}^\lambda (y) & \le & \ds \int_0 ^{t} e^{-\lambda s} \Bigl(   \ellper(z_ \lambda(s), a_ \lambda(s)) +   q\cdot \fper(z_ \lambda(s), a_ \lambda(s)
 \Bigr )ds  \\ \notag  &  & \ds  +   e^{-\lambda t} \chi_{{\rm per}, q}^\lambda (z_\lambda(t)).
\end{eqnarray}
Using a Taylor expansion of $s\mapsto  e^{-\lambda s}$ as $\lambda$ vanishes, we see that there exists a positive constant $C$ such that 
\begin{eqnarray*}
  \left|\chi_{{\rm per}, p}^\lambda (y) -  \chi_{{\rm per}, p}^\lambda (z_\lambda(t)) -  \int_0 ^{t} \ellper(z_ \lambda(s), a_ \lambda(s)) ds - 
  p\cdot (z_ \lambda(t)-y) + \lambda t \chi_{{\rm per}, p}^\lambda (z_\lambda(t))\right| \\ \le C \lambda t^2
\end{eqnarray*}
and 
\begin{eqnarray*}
  \chi_{{\rm per}, q}^\lambda (y) -  \chi_{{\rm per}, q}^\lambda (z_\lambda(t)) -  \int_0 ^{t} \ellper(z_ \lambda(s), a_ \lambda(s)) ds - 
  q\cdot (z_ \lambda(t)-y) + \lambda t \chi_{{\rm per}, q}^\lambda (z_\lambda(t)) \\ \le C \lambda t^2
\end{eqnarray*}
We deduce from the latter two inequalities that
\begin{equation}\label{eq:52}
  \begin{split}
    &-\Bigl[\chi_{{\rm per}, q}^\lambda (z_\lambda) -    \langle \chi^\lambda_{{\rm per}, q} \rangle 
-  \chi_{{\rm per}, p}^\lambda (z_\lambda) + \langle \chi^\lambda_{{\rm per}, p} \rangle   \Bigr]_{0}^{t} 
\\
\le & (q-p)\cdot (z_\lambda(t)-y) -\lambda  t
(\chi_{{\rm per}, q}^\lambda (z_\lambda(t))-\chi_{{\rm per}, p}^\lambda (z_\lambda(t))) +2C\lambda t^2.
  \end{split}
\end{equation}
 We know that  for two correctors $\chi_{{\rm per}, p}$ and $\chi_{{\rm per}, q}$  of the periodic problems (\ref{eq:9}) 
respectively associated with $p$ and $q$,
 it is possible to  find some subsequence $\lambda$ such that
\begin{itemize}
\item  $\chi_{{\rm per}, q}^\lambda  -    \langle \chi^\lambda_{{\rm per}, q} \rangle $ converges uniformly to $\chi_{{\rm per}, q}$
\item  $\chi_{{\rm per}, p}^\lambda  -    \langle \chi^\lambda_{{\rm per}, p} \rangle $ converges uniformly to $\chi_{{\rm per}, p}$
\item $\lambda  (\chi_{{\rm per}, q}^\lambda -\chi_{{\rm per}, p}^\lambda )$ converges uniformly 
to  $-\overline H (q)+\overline H(p).$
\end{itemize}

Note that  $0<\overline H(p) -\overline H (q)<\overline H(p) $   and that 
$\chi_{{\rm per}, q}$ and $\chi_{{\rm per}, p}$ are uniformly bounded by a constant which depends on $\overline H(p)$.
Hence the left hand side of (\ref{eq:52}) is bounded from below by $-c_0$, for a suitable constant $c_0$ 
depending on $\overline H (p)$ only.

From (\ref{eq:52}) and the observations above, we deduce that there exists a constant $c_1$ depending on $\overline H (p)$ only, such that for all $T>0$, 
we may select a subsequence  $(\lambda_n) $  such that 
\begin{displaymath}
  (p-q)\cdot (z_{\lambda_n}(t)-y)\le (\overline H (q) - \overline H(p))t +c_1
\end{displaymath}
for all $t\in [0,T]$ and all $n$. The desired result follows.
  \end{proof}


The next proposition deals with the existence of a corrector associated to the piecewise linear function $y\mapsto \min(p\cdot y, \tilde p\cdot y)$, 
in the whole space $\R^d$. In view of Remark \ref{sec:function--underline-3}, this corrector will be useful for proving that $\underline u$ satisfies (\ref{eq:14}).
As already mentioned, this strategy differs from that used in \cite{PLL-college} in which  the  corrector  is associated to the  linear function $y\mapsto p\cdot y$, under stronger assumptions. 
\begin{proposition}
  \label{sec:function--underline-1}
For any $p\in \R^d$ such that $\overline H (p)>E$,  let $e$ and $\tilde p$
 be defined  as in the beginning of Section \ref{sec:function--underline}.
 Then there exists $\chi_{p,\tilde p}\in C(\R^d)$ such that
\begin{eqnarray}
  \label{eq:55}
\lim_{|y|\to \infty}  \frac {|\chi_{p,\tilde p}(y)-    \min ( p\cdot y, \tilde p \cdot y)  |}{|y|}=0,\\
 \label{eq:56}
 H\left(y, D\chi_{p,\tilde p } \right)=\overline H(p) \quad \hbox{in }\R^d,
\end{eqnarray}
where (\ref{eq:56}) is understood in the sense of viscosity.
\end{proposition}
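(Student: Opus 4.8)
The plan is to construct $\chi_{p,\tilde p}$ as a locally uniform limit of value functions of suitable truncated optimal control problems, in the same spirit as the construction of $w$ in Section~\ref{sec:ergod-const-glob}, but now with boundary data on $\partial B_R(0)$ chosen so that the limit corrector grows like $\min(p\cdot y,\tilde p\cdot y)$ rather than like $p_0\cdot y$ or like $w$. Concretely, for $R>R_0$ I would consider the state-constrained problem in $\overline{B_R(0)}$ for the Hamiltonian $H$ at level $\overline H(p)$, i.e. viscosity solutions $\chi^R$ of $H(y,D\chi^R)\le \overline H(p)$ in $B_R(0)$ and $H(y,D\chi^R)\ge \overline H(p)$ in $\overline{B_R(0)}$, but modified on the boundary so that $\chi^R$ is forced to match (a bounded perturbation of) the barrier $b(y):=\min(p\cdot y,\tilde p\cdot y)$ near $\partial B_R(0)$. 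Equivalently, one takes the maximal subsolution of $H(y,D\chi^R)\le \overline H(p)$ in $B_R(0)$ lying below the appropriate shift of $b$; the solvability at level $\overline H(p)$ is exactly what is guaranteed by $\overline H(p)>E\ge E^R$ for all $R$ (from \eqref{eq:27}--\eqref{eq:30}).

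Next I would produce the two one-sided barriers that trap $\chi^R$. For the \emph{upper} barrier, using the periodic correctors I set $\overline w_p(y)=p\cdot y+\chiperp(y)+c$ and $\overline w_{\tilde p}(y)=\tilde p\cdot y+\chi_{{\rm per},\tilde p}(y)+\tilde c$; since $H=\Hper$ outside $\overline{B_{R_0}(0)}$ and by \eqref{eq:9}, each of these solves $H(y,D\cdot)=\overline H(p)$ in $\R^d\setminus\overline{B_{R_0}(0)}$ (recall $\overline H(\tilde p)=\overline H(p)$), so $\min(\overline w_p,\overline w_{\tilde p})$ is a supersolution there; the comparison principle for the state-constrained problem then bounds $\chi^R$ from above by this min, uniformly in $R$, up to a fixed constant, and this is what will give the $\le$ side of the sublinearity \eqref{eq:55} and the bound $\chi_{p,\tilde p}(y)\le \min(p\cdot y,\tilde p\cdot y)+C$. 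For the \emph{lower} barrier — the delicate side — I would use the control-theoretic argument of Proposition~\ref{sec:ergod-const-glob-2} together with Lemma~\ref{sec:function--underline-4}: an optimal (or nearly optimal) trajectory for $\chiperp^\lambda$ leaves the half-space $\{e\cdot x\le r\}$ at a linear rate and similarly for $\tilde p$ on the other side, so following such trajectories from a point $y$ shows that the value function, which is a supersolution of $H(y,D\cdot)\ge\overline H(p)$, cannot drop much below $\min(p\cdot y,\tilde p\cdot y)$; this yields $\chi_{p,\tilde p}(y)\ge\min(p\cdot y,\tilde p\cdot y)-o(|y|)$, and combined with the upper bound gives \eqref{eq:55}. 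Throughout, the role of $w$ and of the growth estimate \eqref{eq:31} is to control what happens near the defect: on the ``corrector locus'' $\{p\cdot y=\tilde p\cdot y\}$, where neither periodic barrier dominates, one falls back on $w$ (whose superlinear growth transversally, or boundedness-from-below along, keeps the construction consistent).

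Having the uniform Lipschitz bound on $\chi^R$ (from uniform coercivity of $H$ and the $L^\infty$ bound $\|\chi^R-$ barrier$\|$, exactly as for $w^R$), I would normalize $\chi^R(0)=0$ and extract, via Ascoli--Arzel\`a and a diagonal argument over a sequence $R_n\to\infty$, a locally uniform limit $\chi_{p,\tilde p}\in C(\R^d)$. Stability of viscosity solutions of state-constrained problems upgrades \eqref{eq:25}--\eqref{eq:26} at level $\overline H(p)$ in every ball $B_R(0)$, hence gives \eqref{eq:56} in all of $\R^d$ (the ``$\le$'' and ``$\ge$'' inequalities pass to the interior of each ball, and since the balls exhaust $\R^d$ the equation holds everywhere). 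The sublinearity \eqref{eq:55} is inherited from the two barrier estimates established above.

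The main obstacle, as flagged in the paper's roadmap, is the lower barrier near the set $\{y:\ p\cdot y=\tilde p\cdot y\}$ where the piecewise-affine function has its crease: neither periodic corrector $\overline w_p$ nor $\overline w_{\tilde p}$ provides a good lower bound there, and a naive min of subsolutions need not be a subsolution. The resolution is to exploit that along the crease one has freedom in the direction orthogonal to $e$ and to invoke Proposition~\ref{sec:ergod-const-glob-2}: when $E>\overline H(p_0)$, the corrector $w$ grows superlinearly in the directions transverse to $p_0$, which forces the optimal trajectories of the relevant $\lambda$-discounted problems to move away from the defect at a controlled (linear) rate, so the value function stays above $\min(p\cdot y,\tilde p\cdot y)$ modulo a sublinear error. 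Making this quantitative — extracting the linear escape rate uniformly in $R$ and patching the two half-space estimates of Lemma~\ref{sec:function--underline-4} across the crease — is where the real work lies; everything else is a routine adaptation of the Lions--Papanicolaou--Varadhan machinery and the state-constrained comparison principle already used for $w^R$ and $w$.
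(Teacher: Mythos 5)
Your construction of the corrector via a family of Dirichlet problems on $B_R(0)$ with boundary data built from the periodic correctors, followed by Ascoli--Arzel\`a and a diagonal extraction, is the right skeleton and is what the paper does (equations~\eqref{eq:57}--\eqref{eq:58}). But you have swapped the roles of the two barrier estimates, and neither swapped version goes through. For the \emph{upper} barrier, you propose to use $\min(\overline w_p,\overline w_{\tilde p})$ as a supersolution and invoke comparison on $B_R(0)$. The issue is precisely the one you mention in passing: $\overline w_p$ and $\overline w_{\tilde p}$ solve $H(y,D\cdot)=\overline H(p)$ only in $\R^d\setminus\overline{B_{R_0}(0)}$, so $\min(\overline w_p,\overline w_{\tilde p})$ is a supersolution \emph{only} on the annulus, not on all of $B_R(0)$. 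Comparing with $\chi^R$ on $B_R(0)$ therefore does not apply, and comparing on $B_R(0)\setminus\overline{B_{R_0}(0)}$ requires an a priori control of $\chi^R$ on $\partial B_{R_0}(0)$, which is circular. In the paper this upper bound is instead obtained from the optimal-control representation formula~\eqref{eq:66} together with Lemma~\ref{sec:function--underline-4}: one plugs in the specific near-optimal trajectory $z_\lambda$ of the periodic $\lambda$-discounted problem, whose exit-time is controlled because it crosses half-spaces at a linear rate, and estimates its cost using $\chi^\lambda_{{\rm per},p}$.

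For the \emph{lower} barrier, you propose to ``follow such trajectories from a point $y$'' to show that $\chi^R$ ``cannot drop much below $\min(p\cdot y,\tilde p\cdot y)$.'' This cannot give a lower bound: $\chi^R$ is defined as an infimum over trajectories (see~\eqref{eq:66}), so plugging in a specific trajectory only yields an \emph{upper} bound. A lower bound must bound the cost of \emph{every} admissible trajectory, which Lemma~\ref{sec:function--underline-4} does not do. The paper's lower bound comes from an entirely different mechanism: one builds a global Lipschitz subsolution $\sigma=\min\bigl(w-c,\;p\cdot y+\chiperp(y),\;\tilde p\cdot y+\chi_{{\rm per},\tilde p}(y)\bigr)$ of $H(y,D\cdot)\le \overline H(p)$, using that for convex $H$ the min of Lipschitz a.e.-subsolutions is a viscosity subsolution (\cite[Prop.~5.1, Ch.~II]{MR1484411}); comparison with the Dirichlet problem then gives $\sigma\le\chi^R$. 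The role of Proposition~\ref{sec:ergod-const-glob-2} (superlinear growth of $w-p_0\cdot y$ when $E>\overline H(p_0)$) is to guarantee that for $|y|$ large the branch $w-c$ is not the minimizer in $\sigma$, so that $\sigma$ tracks the piecewise-affine barrier at infinity. Your intuition that $w$ saves the day ``on the corrector locus $\{p\cdot y=\tilde p\cdot y\}$'' is also off: $w$ is needed near the origin (inside $B_{R_0+1}(0)$) where the defect lives, not along the crease.
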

\begin{proof}
Fix $\chiperp$ and $\chi_{{\rm per},\tilde p}$ as in Lemma~\ref{sec:function--underline-4}.
For a radius $R>R_0+1$ that will eventually converge to $+\infty$,  
consider the Dirichlet problem
\begin{eqnarray}
  \label{eq:57}
 H\left(y, D\chi^R_{p,\tilde p } 
\right)=\overline H(p) \quad \hbox{in }B_R(0),\\
\label{eq:58}
\chi^R_{p,\tilde p }(y)=\min \Bigl( p\cdot y + \chiperp(y), \tilde p\cdot y + \chi_{{\rm per}, \tilde p}(y)     \Bigr)  
 \quad \hbox{on }\partial B_R(0).
\end{eqnarray}

As a first step, let us construct a subsolution to (\ref{eq:57})-(\ref{eq:58}). For a constant $c>0$ that will be chosen below, we set 
\begin{equation}
  \label{eq:59}
\sigma(y)=\min \Bigl( w(y)-c,  p\cdot y + \chiperp(y),  \tilde p\cdot y + \chi_{{\rm per}, \tilde p}(y)    \Bigr),
\end{equation}
where $w$ is the viscosity solution of (\ref{eq:29}) constructed in Section~\ref{sec:ergod-const-glob}, which is Lipschitz continuous in $\R^d$ and thus satisfies  (\ref{eq:29}) almost everywhere in $\R^d$ from  \cite[Prop. 1.9, Chapter I, page 31]{MR1484411}.

We next choose $c$ such that, for any  $y\in B_{R_0+1}(0)$,
\begin{displaymath}
  w(y)-c  <   \min\Bigl(p\cdot y + \chiperp(y),  \tilde p\cdot y + \chi_{{\rm per}, \tilde p}(y)    \Bigr) , 
\end{displaymath}
thus 
\begin{equation}
  \label{eq:60}
  \sigma(y)=  w(y)-c, \quad \hbox{for any }y \in B_{R_0+1}(0).
\end{equation}
On the other hand, since $E<\overline H (p)$,  $w$, $y\mapsto p\cdot y+\chiperp(y)$ and $y\mapsto \tilde p\cdot y+ \chi_{{\rm per}, \tilde p}(y)$ are three 
Lipschitz continuous viscosity subsolutions of $H_{{\rm per}}(y, Dv)\le \overline H (p)$ in $ B_R(0)\setminus\overline{B_{R_0+1}(0)}$.
 We deduce from \cite[Prop. 1.9, Chapter I, page 31]{MR1484411} that 
\begin{equation}\label{eq:61}
   H\left(y, D\sigma (y) \right)\le \overline H(p) \quad \hbox{ for almost all } y\in B_R(0)\setminus \overline{ B_{R_0+1}(0)}.
\end{equation}
Combining this with (\ref{eq:60}), $\sigma $ actually satisfies (\ref{eq:61}) for almost all $ y\in B_R(0)$.
 Then, since $H$ is convex with respect to its second argument, 
\cite[Prop. 5.1, Chapter II, page 77]{MR1484411} can be applied and yields that $\sigma$ is a viscosity subsolution of 
\begin{equation}\label{eq:62}
   H\left(y, D\sigma  \right)\le \overline H(p) \quad \hbox{ in } B_R(0).
\end{equation}
On the other hand,  it is clear that 
\begin{displaymath}
  \sigma(y)\le  \min\Bigl(p\cdot y + \chiperp(y),  \tilde p\cdot y + \chi_{{\rm per}, \tilde p}(y)    \Bigr)  \quad \hbox{ for any } y\in \partial B_R(0).
\end{displaymath}
Hence, $\sigma$ is a subsolution of the Dirichlet problem (\ref{eq:57})-(\ref{eq:58}).


Our second step is to establish the existence of a constant 
 $C>0$ independent of $R>R_0+1$ such that, for all $y\in B_R(0)$,
 \begin{equation}
   \label{eq:63}
 \sigma(y) -  \min\Bigl(p\cdot y + \chiperp(y),  \tilde p\cdot y + \chi_{{\rm per}, \tilde p}(y)    \Bigr)\ge -C.
 \end{equation}
For this purpose, we distinguish two cases:
\begin{description}
\item[Case 1: $E> \overline H (p_0)$.]
We first observe that  $y\mapsto \min\Bigl((p-p_0)\cdot y,  (\tilde p-p_0)\cdot y    \Bigr)$ is bounded from above.
Therefore,  $y\mapsto \min\Bigl((p-p_0)\cdot y  + \chiperp(y),  (\tilde p-p_0)\cdot y  + \chi_{{\rm per}, \tilde p}(y)   \Bigr)$ is also bounded from above. 
Since $\overline H (p)> E> \overline H (p_0)$, we deduce from this and  (\ref{eq:31}) in Proposition \ref{sec:ergod-const-glob-2}
  that for $|y|$ large enough, $w(y)-p_0\cdot y -c$ is larger that $ \min\Bigl((p-p_0)\cdot y  + \chiperp(y),  (\tilde p-p_0)\cdot y  + \chi_{{\rm per}, \tilde p}(y)   \Bigr)$, hence
\begin{displaymath}
  \sigma(y)=  \min\Bigl(p\cdot y + \chiperp(y),  \tilde p\cdot y + \chi_{{\rm per}, \tilde p}(y)    \Bigr).
\end{displaymath}
This yields (\ref{eq:63}).
\item[Case 2: $E=  \overline H (p_0)$.] From the second conclusion of  Proposition \ref{sec:ergod-const-glob-2}, we know that there exists a 
a constant $C>0$ independent of $R$ such that, for all $y\in \R^d$,  
\begin{equation}
  \label{eq:64}
 w(y)-p_0\cdot y>-C.
\end{equation}
 Hence, (\ref{eq:60}), which also reads
\begin{equation}
  \label{eq:65}
w(y)-p_0\cdot y-c< \min\Bigl((p-p_0)\cdot y + \chiperp(y),  (\tilde p-p_0)\cdot y + \chi_{{\rm per}, \tilde p}(y)    \Bigr)  
\end{equation}
then yields that $|e\cdot y|$ is bounded by a constant uniform in $R$.
\\
In turn, this implies that
\begin{displaymath}
   \min\Bigl( (p-p_0)\cdot y + \chiperp(y),  (\tilde p-p_0)\cdot y + \chi_{{\rm per}, \tilde p}(y)    \Bigr) 
\end{displaymath}
is bounded above uniformly in $R$. Combining this with   \eqref{eq:64}, we deduce that for a possibly different value of the  constant $C$,
\begin{displaymath}
  w(y)-c -  \min\Bigl(p\cdot y + \chiperp(y),  \tilde p\cdot y + \chi_{{\rm per}, \tilde p}(y)    \Bigr)\ge -C.
\end{displaymath}
Inserting this minoration into \eqref{eq:59}  yields (\ref{eq:63}).
\end{description}


\medskip

Our third step consists of finding a supersolution to  (\ref{eq:57})-(\ref{eq:58}). This is easy, because  for $q\in \R^d$,  $|q|$ large enough  and  $y_0\in \R^d$  such that $-q\cdot y_0$ is large enough,  $y\mapsto q\cdot (y-y_0)$  is indeed a supersolution of (\ref{eq:57})-(\ref{eq:58}). 

\medskip

The existence  of a solution $\chi^R_{p,\tilde p}$ to  (\ref{eq:57})-(\ref{eq:58}) is then obtained by Perron's method. It can be proved in a classical way that this solution is unique.  Besides, we have the following  representation formula:
\begin{equation}
  \label{eq:66}  \chi^R_{p,\tilde p}(y)=\inf_{z,a, \tau} \left\{
    \begin{array}[c]{l}\ds
\int_0^\tau  \Bigl(\ell(z(t), a(t))+\overline H (p)\Bigr) dt \\  \ds+    \min\Bigl(p\cdot z(\tau) + \chiperp(z(\tau)),
  \tilde p\cdot z(\tau) + \chi_{{\rm per}, \tilde p}(z(\tau))          \Bigr)
    \end{array}
  \right\},
\end{equation}
subject to
\begin{eqnarray*}
  z'(t)&=& f(z(t),a(t)), \quad a(t)\in A \; \hbox{ for almost } t>0,\\
  z(0)&=& y,\\
\tau&=& \inf \{t>0:  |z(t)|=R \}.
\end{eqnarray*}

Now, our goal is to  obtain $\chi_{p,\tilde p}$ as a locally uniform limit of  $\chi^{R_n}_{p,\tilde p}$ for some sequence $R_n$ tending to $+\infty$.
For that purpose, we will use (\ref{eq:63}) which readily gives a bound from below on $\chi^{R}_{p,\tilde p}$,
 but we also need an accurate bound from above.  Our fourth step therefore consists of obtaining this bound by using the representation formula~(\ref{eq:66}) and Lemma \ref{sec:function--underline-4}.

With $c$ given in Lemma \ref{sec:function--underline-4}, which depends only on $\overline H (p)$, let $T>0$ be such that $c(T-1)>2R$.
We denote by $\lambda$  any term in the sequence  appearing in Lemma \ref{sec:function--underline-4}.
With $(z_\lambda, a_\lambda)$ introduced above, let $\tau_\lambda$ be the first time at which $z_\lambda$ hits $\partial B_R(0)$,
which is smaller than $T$ thanks to Lemma~\ref{sec:function--underline-4}. From (\ref{eq:66}), we deduce that 
\begin{equation*}
  \begin{split}
  \chi^R_{p,\tilde p}(y)  \le & \left(   \begin{array}[c]{l}\ds
\int_0^{\tau_\lambda} \left(\ell(z_\lambda(t), a_\lambda(t))+\overline H (p)\right) dt \\  \ds+    \min\Bigl(p\cdot z_\lambda(\tau_\lambda) + \chiperp(z_\lambda(\tau_\lambda)),
  \tilde p\cdot z_\lambda(\tau_\lambda) + \chi_{{\rm per}, \tilde p}(z_\lambda(\tau_\lambda))          \Bigr)
    \end{array}\right)
    \\ 
 \le &    \ds
\int_0^{\tau_\lambda} \left(\ell(z_\lambda(t), a_\lambda(t))+\overline H (p) \right) dt  +  
  p\cdot z_\lambda(\tau_\lambda) + \chiperp(z_\lambda(\tau_\lambda))\\
  = &    \ds
  A +  
  p\cdot y
  \end{split}
\end{equation*}
where,  for brevity, we have set
\begin{displaymath}
  A= \ds  \int_0^{\tau_\lambda} \Bigl(\ell(z_\lambda(t), a_\lambda(t))+    p\cdot \fper( z_\lambda(t), a_\lambda(t))+  \overline H (p) \Bigr)dt + \chiperp(z_\lambda(\tau_\lambda)).
\end{displaymath}
Observing that,   from Lemma \ref{sec:function--underline-4}, the measure of $\{t:\;  z^\lambda(t)\in B_{R_0}(0) \}$ is bounded,
  and that $\ell$ coincides with  $\ellper $ outside $B_{R_0}(0)$, we first obtain that,  for a positive constant $c_1$ depending  only on $\overline H (p)$,
\begin{eqnarray*}
   A  \le &  \ds  \int_0^{\tau_\lambda} \Bigl(\ellper(z_\lambda(t), a_\lambda(t))+    p\cdot \fper( z_\lambda(t), a_\lambda(t))+  \overline H (p) \Bigr)dt 
    +  
    \chiperp(z_\lambda(\tau_\lambda)) + c_1\\
   =&   \ds \int_0^{\tau_\lambda}  e^{-\lambda t} \Bigl(\ellper(z_\lambda(t), a_\lambda(t))+    p\cdot \fper( z_\lambda(t), a_\lambda(t)) \Bigr)dt   +   e^ {-\lambda \tau_\lambda}\chiperp(z_\lambda(\tau_\lambda)) \\
   &+ \tau_\lambda \overline H (p)  + c_1 + B,
\end{eqnarray*}
where
\begin{displaymath}
  \begin{array}[c]{ll}
    B=&  (1-e^ {-\lambda \tau_\lambda})\chiperp(z_\lambda(\tau_\lambda)) \\
    &+  \ds  \int_0^{\tau_\lambda}  (1-e^{-\lambda t}) \Bigl(\ellper(z_\lambda(t), a_\lambda(t))+    p\cdot \fper( z_\lambda(t), a_\lambda(t)) \Bigr)dt.
  \end{array}
\end{displaymath}
But $\tau_\lambda$ is bounded uniformly in $\lambda$ (it does depend on $R$), so
$\lim_{\lambda\to 0}  (1-e^ {-\lambda \tau_\lambda})=0$,  and we see that
for $\lambda $ smaller than a constant depending on $R$, $|B|\le c_1$, hence
\begin{eqnarray*}
   A \le 
 &\ds  \int_0^{\tau_\lambda}  e^{-\lambda t} \Bigl(\ellper(z_\lambda(t), a_\lambda(t))+    p\cdot \fper( z_\lambda(t), a_\lambda(t)) \Bigr)dt +   e^ {-\lambda \tau_\lambda}\chiperp(z_\lambda(\tau_\lambda)) \\
 &+ \tau_\lambda \overline H (p)  + 2c_1
\end{eqnarray*}
Using \eqref{eq:chiperplambda}, the right hand side in the latter inequality can be written
\begin{equation*}
  \chiperp^\lambda(y)- e^{-\lambda  \tau_\lambda} \chiperp^\lambda(z_\lambda(\tau_\lambda)) +  e^ {-\lambda \tau_\lambda}\chiperp(z_\lambda(\tau_\lambda)) 
 + \tau_\lambda \overline H (p)  + 2c_1.
\end{equation*}
Next, since $\lim_{\lambda\to 0}  \lambda \langle \chiperp^\lambda \rangle =-\overline H (p)$, we get that for $\lambda$ small enough,
\begin{eqnarray*}
 A \quad \le 
    &\ds \quad \chiperp^\lambda(y)-
      e^{-\lambda  \tau_\lambda} \chiperp^\lambda(z_\lambda(\tau_\lambda)) +
      e^ {-\lambda \tau_\lambda}\chiperp(z_\lambda(\tau_\lambda)) 
      -\lambda  \tau_\lambda   \langle \chiperp^\lambda \rangle
      + 3c_1
    \\
    = & \ds \chiperp^\lambda(y)-  \langle \chiperp^\lambda \rangle 
    +\left( -e^{-\lambda  \tau_\lambda} +1-\lambda  \tau_\lambda\right)  \langle \chiperp^\lambda \rangle \\
    &  \ds - e^{-\lambda  \tau_\lambda} \left(\chiperp^\lambda(z_\lambda(\tau_\lambda)) -  \langle \chiperp^\lambda \rangle - \chiperp(z_\lambda(\tau_\lambda))  \right)+3c_1 
\end{eqnarray*}
The uniform convergence of $ \chiperp^\lambda -\langle \chiperp^\lambda \rangle $  to $\chiperp$ then implies that for $\lambda$ small enough
\begin{displaymath}
  A\le  \chiperp(y) +4 c_1.
\end{displaymath}
We have proven that for all $R$,
\begin{equation}\label{eq:67}
  \chi^R_{p,\tilde p}(y)\le   \chiperp(y)  + p\cdot y +4c_1.
\end{equation}
Similarly, using  Lemma~\ref{sec:function--underline-4}, we see that there exists a positive constant $\tilde c_1$ such that
\begin{equation}\label{eq:68}
  \chi^R_{p,\tilde p}(y)\le   \chi_{{\rm per},\tilde p}(y)  + \tilde p\cdot y +4 \tilde c_1.
\end{equation}
This concludes our fourth step. We deduce from (\ref{eq:67}) and (\ref{eq:68}) that there exists a constant $c_2$ independent of $R$, (which depends only on $\overline H (p)$) such that for all $y\in \overline B_R(0)$, 
\begin{equation}\label{eq:69}
  \sigma (y) \le \chi^R_{p,\tilde p}(y)\le   \min \left(   \chiperp (y) +p\cdot y, \chi_{{\rm per},\tilde p}(y)  + \tilde p\cdot y \right) +c_2.
\end{equation}
From (\ref{eq:69}) and the fact that $\chi^R_{p,\tilde p}$ is locally Lipschitz with a Lipschitz constant independent of $R$, we can 
find a sequence $(R_n)_n$ which tends to $+\infty$ such that $\chi^{R_n}_{p,\tilde p}$ tends to some $\chi_{p,\tilde p}$ locally uniformly, and 
by passing to the limit in (\ref{eq:57}), (\ref{eq:69}) (using  (\ref{eq:63})), we obtain that
$\chi_{p,\tilde p}$
satisfies (\ref{eq:55})-(\ref{eq:56}).
\end{proof}


\begin{proposition}
  \label{sec:function--underline-2}
If there exists $\phi\in C^1(\R^d)$  such that $0$ is a local minimizer of $\underline u -\phi$  and $ \overline H(D\phi(0))>E$, then
\begin{equation}
  \label{eq:70}
 \alpha \underline u(0)+ \overline H(D\phi(0)) \ge 0.
\end{equation}
\end{proposition}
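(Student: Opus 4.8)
The plan is to run Evans' perturbed test-function method using the corrector $\chi_{p,\tilde p}$ produced by Proposition~\ref{sec:function--underline-1}. Write $p=D\phi(0)$ and, subtracting a constant from $\phi$, assume $\phi(0)=\underline u(0)$; since $\overline H(p)>E\ge\overline H(p_0)$ one has $p\neq p_0$, so the unit vector $e$, the second slope $\tilde p$ and the function $\chi_{p,\tilde p}$ of Section~\ref{sec:function--underline} are well defined. Set $V(y)=\min(p\cdot y,\tilde p\cdot y)$; then $V$ is positively $1$-homogeneous, $V(y)\le p\cdot y$ for every $y$, and the uniform coercivity~\eqref{eq:2} together with~\eqref{eq:56} forces $\chi_{p,\tilde p}$ to be globally Lipschitz, so that~\eqref{eq:55} gives $\varepsilon\,\chi_{p,\tilde p}(\cdot/\varepsilon)\to V$ locally uniformly on $\R^d$ as $\varepsilon\to0$. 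The inequality $V\le p\cdot y$ will be used through the following consequence of the local-minimum property of $\underline u-\phi$: since $\phi\in C^1$ with $D\phi(0)=p$, there is a non-decreasing modulus $m$ with $m(0^+)=0$ and a radius $\rho_0>0$ such that, for $|x|\le\rho\le\rho_0$,
\[
\underline u(x)\ \ge\ \underline u(0)+p\cdot x-\rho\,m(\rho)\ \ge\ \underline u(0)+V(x)-\rho\,m(\rho),
\]
which is the concrete form of Remark~\ref{sec:function--underline-3} we need.

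I would then argue by contradiction, assuming $\alpha\,\underline u(0)+\overline H(p)=-2\theta$ with $\theta>0$. Recall also that, $\underline u$ being defined by~\eqref{eq:18} and $u_\varepsilon$ being equi-Lipschitz, one has $u_\varepsilon\ge\underline u-o_\varepsilon(1)$ uniformly on compact sets and $\underline u(0)=\liminf_{\varepsilon\to0}u_\varepsilon(0)$. Fix $\rho>0$ small enough that the displayed inequality holds and that $\alpha(|p|+|\tilde p|)\,\rho\le\theta/2$, and put $\Phi^\varepsilon(x)=\underline u(0)+\varepsilon\,\chi_{p,\tilde p}(x/\varepsilon)$, which by scaling is a viscosity solution of $H(x/\varepsilon,D\Phi^\varepsilon)=\overline H(p)$ in $\R^d$. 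Since $\varepsilon\,\chi_{p,\tilde p}(\cdot/\varepsilon)\to V$ uniformly on $\overline{B_\rho(0)}$ and $\|V\|_{L^\infty(B_\rho(0))}\le(|p|+|\tilde p|)\rho$, for $\varepsilon$ small $\Phi^\varepsilon$ is a \emph{strict} viscosity subsolution of $\alpha v+H(x/\varepsilon,Dv)\le-\theta$ in $B_\rho(0)$.

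Next I would absorb this strictness into a quadratic bump: with $\zeta(x)=c\,(\rho^2-|x|^2)$ and $c>0$ chosen so that $c\,\rho\,(\alpha\rho+2M_f)=\theta/2$, property~\eqref{eq:4} gives $H(y,q+D\zeta(x))\le H(y,q)+M_f|D\zeta(x)|$, hence $\widetilde\Phi^\varepsilon:=\Phi^\varepsilon+\zeta$ is a viscosity subsolution of $\alpha v+H(x/\varepsilon,Dv)\le-\theta/2\le0$ in $B_\rho(0)$, while $\zeta\equiv0$ on $\partial B_\rho(0)$. On $\partial B_\rho(0)$, combining the displayed lower bound for $\underline u$, the bound $u_\varepsilon\ge\underline u-o_\varepsilon(1)$, and $\Phi^\varepsilon=\underline u(0)+V+o_\varepsilon(1)$ uniformly, one gets $\widetilde\Phi^\varepsilon\le u_\varepsilon+K_\varepsilon$ with $K_\varepsilon\to\rho\,m(\rho)\ge0$; replacing $K_\varepsilon$ by $K_\varepsilon^+=\max(K_\varepsilon,0)$, the function $u_\varepsilon+K_\varepsilon^+$ is still a supersolution of $\alpha v+H(x/\varepsilon,Dv)=0$ in $B_\rho(0)$. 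The comparison principle for this Hamilton--Jacobi equation on the bounded set $B_\rho(0)$ (see e.g.\ \cite[Chapter II, Theorem 3.5]{MR1484411}) then yields $\widetilde\Phi^\varepsilon\le u_\varepsilon+K_\varepsilon^+$ throughout $B_\rho(0)$; evaluating at $x=0$,
\[
\underline u(0)+\varepsilon\,\chi_{p,\tilde p}(0)+c\,\rho^2\ \le\ u_\varepsilon(0)+K_\varepsilon^+ .
\]
Letting $\varepsilon\to0$ along a sequence with $u_\varepsilon(0)\to\underline u(0)$ and using $K_\varepsilon^+\to\rho\,m(\rho)$, we obtain $c\,\rho^2\le\rho\,m(\rho)$, i.e.\ $\theta/\bigl(2(\alpha\rho+2M_f)\bigr)\le m(\rho)$. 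As $\rho\to0$ the left-hand side stays above the positive constant $\theta/(4M_f)$, whereas $m(\rho)\to0$; this is impossible for $\rho$ small, and the contradiction proves~\eqref{eq:70}.

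The only genuinely non-routine ingredient is the corrector $\chi_{p,\tilde p}$ itself, supplied by Proposition~\ref{sec:function--underline-1}; it is there — not here — that the second slope $\tilde p$ and the sublinear behaviour at infinity are essential, and where the hypothesis $\overline H(p)>E$ is really used. Granting that corrector, the argument above is a standard perturbed-test-function plus comparison argument; the points calling for a little care are the passage from the $C^1$ test function $\phi$ to the $1$-homogeneous function $V$ (which uses only $V\le p\cdot y$) and the non-linearity of $H$ in the momentum variable when inserting the bump, for which~\eqref{eq:4} is exactly what is needed.
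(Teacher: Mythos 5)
Your proof is correct and follows the same perturbed-test-function strategy as the paper: build $\Phi^\varepsilon(x)=\underline u(0)+\varepsilon\,\chi_{p,\tilde p}(x/\varepsilon)$ from the corrector of Proposition~\ref{sec:function--underline-1}, note that under the contradiction hypothesis~\eqref{eq:71} it is a strict viscosity subsolution of~\eqref{eq:7} on a small ball around the origin, and compare with $u_\varepsilon$ there. The genuine difference lies in how the slack at $x=0$ is generated. The paper strictifies the local minimum of $\underline u-\phi$ to obtain a boundary gap $k(r)>0$ and then asserts $\phi_\varepsilon<u_\varepsilon-k(r)/2$ on $\partial B_r(0)$; implicitly this needs the gap $k(r)$ to dominate the $o(|x|)$ discrepancy between the $C^1$ test function $\phi$ and the piecewise linear profile $\min(p\cdot x,\tilde p\cdot x)$ that the corrector actually tracks, a point the paper does not spell out (the gap produced by the usual strictification $\phi\mapsto\phi-\eta|x|^2$ is of order $r^2$, while the modulus error $r\,m(r)$ coming from $D\phi$ may well be larger). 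Your version sidesteps this: the quadratic bump $\zeta(x)=c(\rho^2-|x|^2)$, absorbed into the strict subsolution inequality through~\eqref{eq:4}, vanishes on $\partial B_\rho(0)$ so that only the modulus error $\rho\,m(\rho)$ remains on the boundary, and sending $\rho\to 0$ then wins because $c\rho$ stays bounded away from $0$ while $m(\rho)\to 0$. This bump-and-shrink mechanism is self-contained and, to my reading, cleaner and more robust than what is written in the paper. The supporting facts you invoke — the uniform lower bound $u_\varepsilon\ge\underline u-o_\varepsilon(1)$ on compacts, the global Lipschitz estimate for $\chi_{p,\tilde p}$ from the coercivity~\eqref{eq:2}, and the locally uniform convergence of $\varepsilon\,\chi_{p,\tilde p}(\cdot/\varepsilon)$ to $\min(p\cdot x,\tilde p\cdot x)$ — are all used correctly.
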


\begin{proof}
We can always assume that $\phi(0)=\underline u(0)$ and that $\underline u -\phi$ has a strict local minimum at the origin.
For brevity, let us set $p = D\phi(0)$. Because $ \overline H(p)>E$, we know that $p\not=p_0$  and we can  apply 
Proposition~\ref{sec:function--underline-1},  with $e$ and $\tilde p$ defined as above. Suppose by contradiction that
\begin{equation}
  \label{eq:71}
  \alpha \underline u(0)+ \overline H(p) =-\theta<0,
\end{equation} and
 consider the perturbed test-function
  \begin{displaymath}
    \phi_\varepsilon(x)=\phi(0)+\varepsilon \chi_{p,\tilde p}(\frac x \varepsilon),
  \end{displaymath}
where $\chi_{p,\tilde p}$ is the  function appearing in Proposition~\ref{sec:function--underline-1}.

The definition of $\phi_\varepsilon$, \eqref{eq:56} and \eqref{eq:71} imply that
 $\alpha \phi_\varepsilon +H(\frac x \varepsilon , D \phi_\varepsilon)\le \alpha\varepsilon\chi_{p,\tilde p}(\frac x \varepsilon) -\theta$ in the sense of viscosity in $\R^d$.

From this and \eqref{eq:55}, we deduce that there exists $r_0>0$ and $\varepsilon_0>0$ such that for all $0<\varepsilon<\varepsilon_0 $ and 
$0<r<r_0$,  $ \phi_\varepsilon$ is a viscosity subsolution of 
\begin{equation}
  \label{eq:72}
  \alpha \phi_\varepsilon+ H(\frac x  \varepsilon , D\phi_\varepsilon )\le -\frac \theta 2 \quad \hbox{ in } B_r(0).
\end{equation}
On the other hand, since $0$ is a strict local minimizer of $\underline u -\phi$, there exists $r_1>0$ and a  function   $k: (0,r_1]\to (0,1]$, 
such that $\lim_{r\to 0} k(r)=0$ and  for any $r\in (0,r_1]$,
\begin{displaymath}
  \phi(x)\le \underline u (x ) -k(r) \quad \hbox{ on } \partial B_r(0).
\end{displaymath}
From \eqref{eq:55}, we know that for $x\not =0$, $\varepsilon \chi_{p,\tilde p} (\frac x \varepsilon)= \min(p\cdot x, \tilde p \cdot x)+ |x| o_{\varepsilon \to 0}(1)$.

Using (\ref{eq:18}), this implies that  first fixing $r>0$ small enough, we have for  $\varepsilon$ small enough,
\begin{equation}
  \label{eq:73}
\phi_\varepsilon(x)< u_\varepsilon (x) -\frac {k(r)}  2   \quad  \hbox{ on } \partial B_r(0).
\end{equation}
From (\ref{eq:72}) and (\ref{eq:73}) and since $u_\varepsilon$ is a viscosity solution of (\ref{eq:7}), the comparison principle yields  that it is possible to choose $r>0$ such that, for $\varepsilon$ small enough, $\phi_\varepsilon(x)\le  u_\varepsilon (x) -\frac {k(r)}  2$ in $\overline  {B_r(0)}$.
By choosing a sequence $\varepsilon_n$ such that $ u_{\varepsilon_n} (0)$ tends to $\underline u (0)$, we deduce that
$\phi(0)+\frac {k(r)}  2\le  \underline u(0) $, the desired contradiction.
\end{proof}

\begin{proposition}
\label{sec:function--underline-5}
If there exists $\phi\in C^1(\R^d)$  such that $0$ is a local minimizer of $\underline u -\phi$  and $ \overline H(D\phi(0))\le E$, then
\begin{equation}
\label{eq:74}
 \alpha \underline u(0)+ E \ge 0.
\end{equation}
\end{proposition}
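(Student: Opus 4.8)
The plan is to argue by contradiction and to mimic, ``from below'', the perturbed test-function argument used in Proposition~\ref{sec:ergod-const-glob-1} to prove \eqref{eq:12}, relying on the global corrector $w$ of \eqref{eq:29}. Let me stress at the outset that the hypothesis $\overline H(D\phi(0))\le E$ only serves to place us in the relevant branch of the case split by which \eqref{eq:14} is established; the argument itself does not use $\phi$.

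Assume, for contradiction, that $\alpha\,\underline u(0)+E=-\theta$ for some $\theta>0$. Recall from Section~\ref{sec:ergod-const-glob} that $w$ is Lipschitz continuous on $\R^d$, that $w(0)=0$, and that the rescaled function $w_\varepsilon(x)=\varepsilon\,w\!\left(\frac x\varepsilon\right)$ is a viscosity solution of $H\!\left(\frac x\varepsilon,Dw_\varepsilon\right)=E$ in $\R^d$, hence in particular a subsolution. For a radius $r>0$ to be fixed below, set, for $\varepsilon>0$,
\[
\psi_\varepsilon(x)=\underline u(0)+\frac{\theta}{2\alpha}+w_\varepsilon(x).
\]
Since $w(0)=0$ and $w$ is, say, $L$-Lipschitz, one has $|w_\varepsilon(x)|\le L|x|\le Lr$ on $B_r(0)$, so the subsolution property of $w_\varepsilon$ together with $\alpha\,\underline u(0)+E=-\theta$ gives, in the viscosity sense in $B_r(0)$,
\[
\alpha\,\psi_\varepsilon+H\!\left(\frac x\varepsilon,D\psi_\varepsilon\right)\le\alpha\,\underline u(0)+\frac\theta2+\alpha Lr+E=-\frac\theta2+\alpha Lr .
\]
Fixing $r>0$ (independently of $\varepsilon$) so small that $\alpha Lr\le\theta/4$, we conclude that $\psi_\varepsilon$ is a viscosity subsolution of $\alpha v+H\!\left(\frac x\varepsilon,Dv\right)\le0$ in the open ball $B_r(0)$.

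I would then invoke the comparison principle for state-constrained Hamilton--Jacobi equations exactly as in Proposition~\ref{sec:ergod-const-glob-1}. Being a viscosity solution of \eqref{eq:7} on all of $\R^d$, the function $u_\varepsilon$ is in particular a viscosity supersolution of $\alpha v+H(\frac x\varepsilon,Dv)=0$ on the closed ball $\overline{B_r(0)}$, that is, a state-constrained supersolution there; and the dynamics is controllable along $\partial B_r(0)$ since $\{f(y,a):a\in A\}\supset B_{r_f}(0)$. Hence, by \cite{MR838056,MR951880} (see also \cite[Th.~5.8, Ch.~IV, p.~278]{MR1484411}), the subsolution $\psi_\varepsilon$ lies below $u_\varepsilon$ throughout $\overline{B_r(0)}$. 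Evaluating at the origin and using $w(0)=0$ yields
\[
\underline u(0)+\frac{\theta}{2\alpha}\le u_\varepsilon(0)
\]
for every sufficiently small $\varepsilon$. Choosing, by \eqref{eq:18}, a sequence $\varepsilon_n\downarrow0$ along which $u_{\varepsilon_n}(0)\to\underline u(0)$ and passing to the limit gives $\underline u(0)+\theta/(2\alpha)\le\underline u(0)$, a contradiction with $\theta>0$. Therefore $\alpha\,\underline u(0)+E\ge0$.

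The only step requiring some care is checking that $\psi_\varepsilon$ is a bona fide viscosity subsolution in $B_r(0)$; this rests on the fact that $w$ solves $H(y,Dw)=E$ on the whole of $\R^d$, including near the origin where the defect lives, which is exactly why no information on $u_\varepsilon$ along $\partial B_r(0)$ is needed. Everything else is sign bookkeeping plus an off-the-shelf state-constrained comparison result, so this case is substantially simpler than its companion Proposition~\ref{sec:function--underline-2}, where the condition $\overline H(D\phi(0))>E$ obliges one to replace $w$ by the corrector $\chi_{p,\tilde p}$ of Proposition~\ref{sec:function--underline-1}.
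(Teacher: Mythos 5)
Your argument breaks down at the comparison step, and the breakdown is not cosmetic.

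You place $\psi_\varepsilon$ (a strict subsolution of $\alpha v+H(\frac x\varepsilon,Dv)\le 0$ in the open ball $B_r(0)$) below $u_\varepsilon$ by invoking the state-constrained comparison principle, asserting that $u_\varepsilon$, being a viscosity supersolution on all of $\R^d$, ``is in particular \dots a state-constrained supersolution'' on $\overline{B_r(0)}$. This is false. The state-constrained supersolution property \eqref{eq:20} (equivalently \eqref{eq:22}) is \emph{strictly stronger} than being a supersolution in the interior: at a boundary point $y_0\in\partial B_r(0)$ it requires the supersolution inequality for every $C^1$ test function touching from below \emph{relative to the closed ball}, which includes test functions whose gradient has an outward-normal component. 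Such touching points need not be local minima of $u_\varepsilon-\phi$ in an $\R^d$-neighbourhood, so the supersolution property of $u_\varepsilon$ on $\R^d$ gives no information there. The function $u_\varepsilon$ simply has no reason to ``see'' the ball $B_r(0)$. Notice the asymmetry with Proposition~\ref{sec:ergod-const-glob-1}: there the state-constrained function in the pair is the perturbed test function $\phi^{\varepsilon,R}$ built from $w^R$, which genuinely satisfies \eqref{eq:26}, and $u_\varepsilon$ plays the role of the interior subsolution. You are trying to run that argument with the roles swapped, and the swap is not legitimate. And if instead you appeal to the ordinary comparison principle on $\overline{B_r(0)}$, you would need $\psi_\varepsilon\le u_\varepsilon$ on $\partial B_r(0)$, for which you have no control: $\psi_\varepsilon$ is a constant shift of $w_\varepsilon$ above $\underline u(0)$, while $u_\varepsilon$ on the boundary is only constrained by $\underline u(x)\ge\underline u(0)-\text{Lip}\cdot r$ in the $\liminf$ sense.

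This is precisely where the test function $\phi$, which you discard as irrelevant, is indispensable: since $\underline u-\phi$ has a strict local minimum at $0$, $\phi$ (or its piecewise-affine adjustment) sits strictly below $u_\varepsilon$ on $\partial B_r(0)$ for $\varepsilon$ small, furnishing the boundary control that makes the comparison go through -- exactly as exploited in the proof of Proposition~\ref{sec:function--underline-2} via the estimate $\phi(x)\le\underline u(x)-k(r)$ on $\partial B_r(0)$. The perturbation is then done not with $w$ but with the corrector $\chi_{p,\tilde p}$ of Proposition~\ref{sec:function--underline-1}, whose strictly sublinear growth relative to $\min(p\cdot y,\tilde p\cdot y)$ is tailored so that \eqref{eq:73} holds. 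The paper's proof of the present proposition does not run a fresh comparison argument at all: it perturbs $p=D\phi(0)$ to nearby slopes with $\overline H$-value $E+\varepsilon$, notices that the corresponding piecewise-affine function still touches $\underline u$ from below at $0$, applies Proposition~\ref{sec:function--underline-2} to get $\alpha\underline u(0)+E+\varepsilon\ge0$, and lets $\varepsilon\to0$. Your proposed shortcut, which would establish $\alpha\underline u(0)+E\ge0$ with no reference to $\phi$, is therefore not a simpler route to the same result; it is an argument whose crucial step is unjustified.
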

\begin{proof}
As above we may assume without loss of generality that $\phi(0)=\underline u (0)$ and that $0$ is a strict local minimizer of  $\underline u -\phi$.
  Let us again set $p= D\phi(0)$.

 If $ \overline H(p)> \overline H(p_0)= \min_{q}\overline H (q)$, we set $e$ and $\tilde p$ as above.
For $\eta, \tilde \eta>0$, the function $x\mapsto  \underline u(x) - \underline u(0)- \min ( (p_0+ (1+\eta)(p-p_0) )\cdot x, (p_0+ (1+\tilde \eta) (\tilde p-p_0))\cdot x)$ has a local minimum at at $x=0$.
Given $\varepsilon>0$, we may choose $\eta$ and $\tilde \eta$ such that $  \overline H(p_0+ (1+\eta)(p-p_0) )
=  \overline H(p_0+ (1+\tilde \eta) (\tilde p-p_0)) =E+\varepsilon$.
The same argument as in the proof of Proposition \ref{sec:function--underline-2} with
$\chi_{p_0+ (1+\eta)(p-p_0), p_0+ (1+\tilde \eta) (\tilde p-p_0)}$, the corrected version of the piecewise affine function $y\mapsto \min ( (p_0+ (1+\eta)(p-p_0) )\cdot y, (p_0+ (1+\tilde \eta) (\tilde p-p_0))\cdot y)$,  yields 
$ \alpha \underline u(0)+  \overline H(p_0+ (1+\tilde \eta) (\tilde p-p_0)) \ge 0$, that is
\begin{displaymath}
  \alpha \underline u(0)+E+\varepsilon \ge 0.
\end{displaymath}
Letting $\varepsilon$ tend to $0$, we obtain (\ref{eq:74}).

On the other hand, if $\overline H(p)= \min_{q}\overline H (q)=\overline H (p_0)$, we then consider  two cases:
\begin{enumerate}
\item If $p\not=p_0$, then from the convexity of $=\overline H$, the set $\{t\in \R ,  \overline H(p_0 +t (p-p_0))=\overline H(p_0)\}$ is an interval  $ [ \beta, \gamma]$, with $\beta\le 0$ and $\gamma\ge 1$.
Let  $e$ be the unique unit vector aligned with $p-p_0$ such that $t\mapsto \overline H (   p_0 + \gamma (p-p_0) +t e)$ is non increasing near $t=0$. 
For $\eta, \tilde \eta>0$, the function $x\mapsto \underline u(0)+ \min ( ( p_0+ ( \gamma+\eta)(p-p_0)) \cdot x,
(p_0 +(\beta -\tilde \eta) ) (p-p_0)\cdot x)$ touches $\underline u$ from below at $x=0$. Given $\varepsilon>0$,
we may choose $\eta$ and $\tilde \eta$  such that $  \overline H(  p_0+ ( \gamma+\eta)(p-p_0))= \overline H(p_0 +(\beta -\tilde \eta)  (p-p_0))=E+\varepsilon$.
The same argument as in the proof of Proposition \ref{sec:function--underline-2} with

$\chi_  { p_0+ ( \gamma+\eta)(p-p_0) , p_0 +(\beta -\tilde \eta)  (p-p_0)  }$,
the corrected version of the piecewise affine function $y\mapsto  \min ( ( p_0+ ( \gamma+\eta)(p-p_0)) \cdot y,
(p_0 +(\beta -\tilde \eta) ) (p-p_0)\cdot y)$,
yields that 
$  \alpha \underline u(0)+E+\varepsilon \ge 0$. Then (\ref{eq:74}) is obtained by letting $\varepsilon$ tend to $0$.
 \item If $p=p_0$, we may choose any non zero vector $q_\varepsilon$ such that $\overline H(q_\varepsilon)=E+\varepsilon$: let  $e$ be the unique unit vector aligned with $q_\varepsilon-p_0$ such that $t\mapsto \overline H (q_\varepsilon +t e)$ is non increasing near $t=0$ and let $\tilde q_\varepsilon$ be the unique vector different from $q_\varepsilon$  such that $\tilde q_\varepsilon-p_0 $ aligned with $q_\varepsilon-p_0$, and $\overline H(\tilde q_\varepsilon)=E+\varepsilon$.
The  function $x\mapsto \underline u(0)+ \min (  q_\varepsilon\cdot x,  \tilde  q_\varepsilon\cdot x)$ touches $\underline u$ from below at $x=0$, which yields 
 $ \alpha \underline u(0)+E+\varepsilon \ge 0$ as in the proof of Proposition  \ref{sec:function--underline-2},
 using the   corrector $\chi_  {q_\varepsilon, \tilde q_\varepsilon  }$. Letting $\varepsilon$ tend to $0$, we obtain (\ref{eq:74}).
\end{enumerate}

\end{proof}

We have proved that $\underline u$ satisfies (\ref{eq:14}). 
In order to prove Theorem~\ref{sec:main-result-1}, there only remains  to establish that   $\underline u=\overline u$.

\subsection{End of the proof of Theorem~\ref{sec:main-result-1}}\label{proof_of_main_th}
We consider two cases:
\paragraph{Case 1}
 Suppose first that there does not exist any function $\phi\in C^1(\R^d)$
such that $\underline u -\phi$ has a local minimum at the origin and that $\overline H (D\phi(0))\le E$. 
Hence, from Proposition \ref{sec:function--underline-2}, for any function 
$\phi\in C^1(\R^d)$  such that $\underline u -\phi$ has a local minimum at the origin, 
  \begin{displaymath}
    \alpha \underline u(0)+ \overline H(D\phi(0))\ge 0.
  \end{displaymath}
We deduce from this and Lemma \ref{sec:homog-refeq:7-1} that $\underline u$ is a bounded supersolution of (\ref{eq:8}) in $\R^d$. 
On the other hand, from  Proposition \ref{sec:function--overline-4},  $\overline u$ is a bounded subsolution of (\ref{eq:8}) in $\R^d$.
Hence, from the comparison principle,  \cite[Th. 2.12, Chapter III, page 107]{MR1484411},  we deduce that  for any $x\in \R ^d$, $\overline u(x)\le \underline u(x)$. From the definitions (\ref{eq:17})-(\ref{eq:18}) of $\overline u$  and $\underline u$, we deduce that $\underline u=\overline u$. 
\paragraph{Case 2} If, on the contrary,  there   exists $\phi\in C^1(\R^d)$  such that $\underline u -\phi$ has a local minimum at the origin
 and  that $\overline H (D\phi(0))\le E$, Proposition~\ref{sec:function--underline-5} yields that $\alpha \underline u(0)+E\ge 0$.
On the other hand, from Proposition \ref{sec:ergod-const-glob-1}, we know that $\alpha \overline u(0)+E\le 0$.  From the definitions  (\ref{eq:17})-(\ref{eq:18}) of $\overline u$  and $\underline u$, we deduce that $\underline u(0)=\overline u(0)$.  This allows us to apply the comparison principle in $\R^d\setminus \{0\}$,
 \cite[Remark 2.14, Chapter III, page 109]{MR1484411},
 because  $\underline u$  and $\overline u$ are respectively  a bounded supersolution 
and a bounded subsolution of (\ref{eq:11}) in $\R^d\setminus\{0\}$.
 Therefore,  for any $x\in \R ^d$, $\overline u(x)\le \underline u(x)$, and finally, $\overline u= \underline u$.

\medskip

In both cases, we deduce that the whole family $u_\varepsilon$ converges locally uniformly to a solution of  (\ref{eq:11}) through (\ref{eq:14}), 
which concludes the proof of Theorem~\ref{sec:main-result-1}.

\section{Miscellaneous comments and extensions}
\label{sec:extension}
This section contains various comments about, and illustrations of the general result proven above.

\subsection{A simple and explicit one-dimensional example}
\label{sec:1D}

We begin by providing in this Section~\ref{sec:1D} some illustrations of the results in a simple one-dimensional setting. 

Upon considering this simple situation, we expect to illustrate as sharply as possible,  the homogenization process  described by Theorem~\ref{sec:main-result-1}. We also intend to (a) make explicit in those simple settings the key quantities involved in Theorem~\ref{sec:main-result-1}, in particular the constant~$E$ and the Dirichlet condition~\eqref{eq:12}-\eqref{eq:13}-\eqref{eq:14}
and (b) provide some additional light on some qualitative aspects of the problem that are best exposed using simple examples
and that will be generalized in higher dimensions later on.

We will exploit the peculiarity of the considered one-dimensional setting. In the course of our arguments, we will make several simplifying assumptions about the type of  Hamiltonian~$H$ and the shape of the defect(s) that we consider: see for instance~ \eqref{eq:l0-support}, \eqref{eq:HJB-3}, 
\eqref{eq:even}
below, etc. Even if we expect that several of our arguments may carry over when some of these simplifying assumptions are relaxed, it might be the case that not everything can be generalized. We do not claim that the \emph{techniques} we are employing below, which are specific to the particular setting, carry over to higher dimensions and other Hamiltonians. We only intend to illustrate, on some simple enough cases, some \emph{phenomena} that we find interesting, with no sake of generality whatsoever. 

\medskip

We consider in this section what is, to some extent, the \emph{simplest possible setting} where the homogenization process we have established in the general setting occurs. We pose the Hamilton-Jacobi equation on the real line~$\R$. We pick a \emph{separate} Hamiltonian~$H$ (by this term, we mean, throughout this section, an Hamiltonian that depends \emph{separately} of~$x$ and~$p$), with a potential part that reads as the sum 
\begin{equation}
\label{eq:addition}
\ell\,=\,\ellper\,+\,\ell_0,
\end{equation}
of a periodic function~$\ellper$  and a  local defect~$\ell_0$. Both $\ellper$ and~$\ell_0$ are assumed smooth.
We also assume that~$\ell_0$ is compactly supported, with, say,
\begin{equation}
\label{eq:l0-support}
\textrm{Supp}\,\ell_0\subset [-\frac12,\frac12].
\end{equation}
In order to observe an actual perturbation by~$\ell_0$ of the homogenized equation, we further assume 
\begin{equation}
\label{eq:l0-downward}
\inf_{\R}(\ellper\,+\,\ell_0)\,<\,\inf_{\R}\ellper\,.
\end{equation}
The role of this condition will become clear below but it is already intuitive that its spirit is to make sure that the defect indeed locally \emph{lower} the periodic environment (or put differently in the optimal control interpretation, diminishes the periodic cost) and therefore indeed shows up in the homogenized limit. Condition~\eqref{eq:l0-downward} is our only \emph{actual} assumption, all the other assumptions above and below  (smoothness of the data, compact support of~$\ell_0$, and others to come) being only used to simplify the algebraic expressions manipulated and to spare the reader unnecessary technicalities.
  As for the kinetic part, we choose~$|p|$, that is a simple, commonly used convex nonlinearity. In our notation of Section~\ref{sec:setting}, our choices of course correspond to~$A\,=\,[-1,1]$ and $f(x,a)\,=\,\fper(x,a)\,=\,a$. Put differently, there is neither oscillation nor defect in this kinetic part. We thus manipulate
\begin{equation}
\label{eq:separated}
H(x,p)\,=\,|p|\,-\,\ellper(x)\,-\,\ell_0(x)
\end{equation}
throughout this Section~\ref{sec:1D}. 

Because this positive constant is irrelevant in our arguments and can be easily reinstated in our results, we choose~$\alpha=1$ in \eqref{eq:7} and keep this value throughout the section.

\subsubsection{A downward defect inserted in a flat environment}
\label{ssec:1D-zero}

To start with, let us temporarily further simplify the above model. We take~$\ellper\,=\,0$ and recall the trivial example already introduced in~\cite[Section~6]{CPDE-2015}. The Hamilton-Jacobi equation considered then reads as
 \begin{equation}
\label{eq:HJB-1}
u_\varepsilon(x)+\left|(u_\varepsilon)'(x)\right|=\ell_0\left(\frac{x}{\varepsilon}\right),
\end{equation}
and  should be thought as the perturbation, by the local potential~$\ell_0$, of the periodic equation
\begin{equation}
\label{eq:HJB-1-0}
u(x)+\left|u'(x)\right|=0.
\end{equation}
As is well known, the only bounded $C^1$ solution of~\eqref{eq:HJB-1-0} is $u=0$. 
We may likewise make explicit the bounded solution~$u_\varepsilon$ to~\eqref{eq:HJB-1}. For this purpose, and just to keep algebraic manipulations and expressions simple, we assume, in addition to~\eqref{eq:l0-support}-\eqref{eq:l0-downward}, 
that~$\ell_0$ satisfies
\begin{equation}
\label{eq:HJB-3}
\ell_0(0)\,=\,\inf_{x\in\R}\ell_0(x)\quad \text{and}\quad
\left\{
\begin{array}{ll}
\displaystyle \left(\ell_0\right)'(x)< 0,&\forall x\in ]-\frac12,0[\;\cap\; \textrm{Supp}\,\ell_0,\\
&\\
\displaystyle \left(\ell_0\right)'(x)> 0,&\forall x\in ]0,\frac12[\;\cap\; \textrm{Supp}\,\ell_0.
\end{array}
\right.
\end{equation}
Precisely under such conditions, it is easy to verify that the unique $C^1$ solution to~\eqref{eq:HJB-1} reads as
\begin{equation}
\label{eq:HJB-4}
u_\varepsilon(x)=\left\{
\begin{array}{ll}
\displaystyle e^x\left(\ell_0(0)\,+\int_x^0e^{-t}\,\ell_0\left(\frac{t}{\varepsilon}\right)\,dt\right)&\text{for}\;x<0,\\
&\\
\displaystyle e^{-x}\left(\ell_0(0)\,+\int_0^xe^{t}\,\ell_0\left(\frac{t}{\varepsilon}\right)\,dt\right)&\text{for}\;x>0,
\end{array}
\right.
\end{equation}
 Indeed, a simple computation allows to check that, for instance if $x>0$, the function
$$\displaystyle g(x)=e^{x}\,b(x)-b(0)-\int_0^xe^{t}\,b(t)\,dt$$ is such that~$g(0)=0$
and~$g'(x)=e^x\,b'(x)$. Hence, if $b$ reaches its global minimum at $x=0$, is non-increasing for $x<0$ and
non-decreasing for $x>0$, then the function~$g$ satisfies $g(0)=0$, is non-increasing for $x<0$ and
non-decreasing for $x>0$. This clearly implies that $g\geq 0$ on $\mathbb R$. Applying this to~$\displaystyle
b=\ell_0\left(\frac{.}{\varepsilon}\right)$ we deduce that
$$\displaystyle
e^{x}\,\ell_0\left(\frac{x}{\varepsilon}\right)-\ell_0(0)\,-\int_0^xe^{t}\,\ell_0\left(\frac{t}{\varepsilon}\right)\,dt\,\geq
0.$$
This means that, $u_\varepsilon$ being defined by~\eqref{eq:HJB-4}, $(u_\varepsilon)'(x)$ is non-negative for $x>0$,
hence $(u_\varepsilon)'(x) = \left|(u_\varepsilon)'(x)\right|$. 
In addition, since 
$u_\varepsilon(x)+(u_\varepsilon)'(x)=\ell_0\left(\frac{x}{\varepsilon}\right)$,
we obtain equation~\eqref{eq:HJB-1} for $x>0$. A similar argument holds for $x<0$.

Using the explicit expression~\eqref{eq:HJB-4} of the solution to~\eqref{eq:HJB-1}, we now easily identify its limit~$u$ as $\varepsilon\to 0$. Since the function~$\ell_0$ has compact support, the sequence of
functions~$\displaystyle \ell_0\left(\frac{.}{\varepsilon}\right)$ converges strongly to zero in $L^p$ for
any $p<\infty$. Hence, in any such space, the locally uniform limit~$u$ of~$u_\varepsilon$, is
\begin{equation}
\label{eq:HJB-lim}
u(x)=
\,\ell_0(0)\,e^{-|x|}\,.
\end{equation}
Put differently, the homogenized limit~$u$ is  a bounded (in fact, vanishing at infinity)  solution  to
 \begin{equation}
\label{eq:HJB-5}
\left\{
\begin{array}{l}
\displaystyle u(x)+\left |u'(x)\right |=0\quad\text{for}\,x\not=0,\\
\\
\displaystyle u(0)=\ell_0(0). 
\end{array}
\right.
\end{equation}
It is enlightening to reconcile this particular result with the general result stated in Theorem~\ref{sec:main-result-1}. Since the periodic Hamiltonian identically vanishes here, its homogenized limit is evidently~$\overline H\,=\,0$. The first line of~\eqref{eq:HJB-5} thus agrees with~\eqref{eq:11} (for~the choice of constant~$\alpha\,=\,1$ of course). Far more interesting is the Dirichlet condition~$u(0)=\ell_0(0)$ posed at the origin. In the particular setting of the present section, \eqref{eq:12} and~\eqref{eq:14} respectively read as~$u(0)\,\leq\,-E$  and~$u(0)\,+\,\max\left(E, 0\right)\geq 0$. The effective Dirichlet datum ~$E$ is itself obtained, in whole generality, as the limit, first as~$\lambda\to 0$ and next as~$R\to +\infty$, of~$-\,\lambda w^{\lambda,R}$, where~$w^{\lambda,R}$ is the solution to~\eqref{eq:19}-\eqref{eq:20}. In our setting again, $w^{\lambda,R}$ is actually indeed independent of~$R$ and is easily obtained from the explicit expression~\eqref{eq:HJB-4}. 
Using  a rescaling argument  (setting~$w^{\lambda,R}(x)\,=\,u_{\varepsilon=1}(\lambda x)$ for the rescaled defect~$\displaystyle \lambda^{-1}\ell_0(\lambda^{-1} x)$), we realize that 
\begin{equation}
\label{eq:HJB-wlR}
w^{\lambda,R}(x)=\left\{
\begin{array}{ll}
\displaystyle \lambda^{-1}\,e^{\lambda x}\left(\ell_0(0)\,+\int_{\lambda x}^0e^{-t}\,\ell_0\left(\lambda^{-1} t\right)\,dt\right)&\text{for}\,x<0,\\
&\\
\displaystyle \lambda^{-1}\,e^{-\lambda x}\left(\ell_0(0)\,+\int_0^{\lambda x}e^{t}\,\ell_0\left(\lambda^{-1} t\right)\,dt\right)&\text{for}\,x>0,
\end{array}
\right.
\end{equation}
(actually independent of~$R$) is the bounded $C^1$ solution to the equation
\begin{equation}
\label{eq:HJB-wlR-equation}
\lambda \,w^{\lambda,R}\,+\,\left|(w^{\lambda,R})'\right|\,=\,\ell_0\quad\textrm{in}\;\R.
\end{equation}

Taking the limit~$\lambda\to 0$ of~$-\,\lambda \,w^{\lambda,R}$  readily yields the effective Dirichlet condition~$E\,=\, -\,\ell_0(0)$. 
But, since we have assumed that~$\ell_0(0)<0$,  $E$ is positive. Thus
$u(0)\,\leq\,-E$  and~$u(0)\,+\,\max\left(E, 0\right)\geq 0$ combine with one another into
\begin{equation}
\label{eq:1D-E-zero}
u(0)\,=\,-E\,=\,\ell_0(0)\,.
\end{equation}
  We have thus recovered in~\eqref{eq:HJB-5} the general limit~\eqref{eq:11}-\eqref{eq:12}-\eqref{eq:13}-\eqref{eq:14} stated in Theorem~\ref{sec:main-result-1}.
  
  In addition, taking the limit, as $\lambda\to 0$ and say for $x>0$, of 
  \begin{equation*}
w^{\lambda,R}(x)\,-\,w^{\lambda,R}(0)= \lambda^{-1}\,\left((e^{-\lambda x}-1)\,\ell_0(0)+e^{-\lambda x}\,\int_0^{\lambda x}e^{t}\,\ell_0\left(\lambda^{-1} t\right)\,dt\right),
\end{equation*}
yields
\begin{equation*}
w^{\lambda,R}(x)\,-\,w^{\lambda,R}(0)\,\to -\,\ell_0(0)\,x\,+\,\int_0^x\ell_0(t)\,dt,
\end{equation*}
and we thus find that, as predicted by~\eqref{eq:31} in Proposition~\ref{sec:ergod-const-glob-2}, $  \lim_{x\to +\infty} w(x)=+\infty$. A similar argument using the first line of~\eqref{eq:HJB-wlR} confirms the same limit in $-\infty$.

\medskip

\begin{remark}
\label{rk:upward}
We note in passing that, had we assumed, say, that the local defect~$\ell_0$ is everywhere \emph{nonnegative}, satisfies 
\begin{equation}
\label{eq:HJB-3-bis}
\ell_0(0)\,=\,\sup_{x\in\R}\ell_0(x)\quad \text{and}\quad
\left\{
\begin{array}{ll}
\displaystyle \left(\ell_0\right)'(x)> 0,&\forall x\in ]-1,0[\;\cap\; \textrm{Supp}\,\ell_0,\\
&\\
\displaystyle \left(\ell_0\right)'(x)<0,&\forall x\in ]0,1[\;\cap\; \textrm{Supp}\,\ell_0,
\end{array}
\right.
\end{equation}
instead of~\eqref{eq:HJB-3} 
 (and assumed also, again a technicality, that~$\ell_0$ is even), then an easy adaptation of the above  algebraic expressions shows that, \textit{mutatis mutandis}, 
\begin{equation}
\label{eq:HJB-4-bis}
u_\varepsilon(x)=\left\{
\begin{array}{ll}
\displaystyle e^{-x}\,\int_{-\infty}^xe^{t}\,\ell_0\left(\frac{t}{\varepsilon}\right)\,dt&\text{for}\;x<0,\\
&\\
\displaystyle e^{x}\,\int_x^{+\infty}e^{-t}\,\ell_0\left(\frac{t}{\varepsilon}\right)\,dt&\text{for}\;x>0.
\end{array}
\right.
\end{equation}
This explicit calculation thus shows that, instead of solving~\eqref{eq:HJB-5}, the homogenized limit~$u$ vanishes over the real line, just like it is the case for the periodic, null, Hamiltonian. As briefly mentioned in the introduction, the presence of this "upward" defect~$\ell_0\geq 0$ only affects the next order term. This relates to the "other regime" considered in the works~\cite{PLL,C-LB-S}.
\end{remark}

\begin{remark}
\label{rk:1D-compact}
We also store for future use (see Remark~\ref{rk:1D-compact-bis} and Section~\ref{sssec:defaut-infini}) the observation that the homogenized limit~$u$ given by~\eqref{eq:HJB-lim} does not have compact support, and therefore only agrees \emph{at infinity}~$x\,=\,\pm\infty$ with the (here, trivial) homogenized limit of the periodic case, solution to~\eqref{eq:HJB-1-0}.
\end{remark}

\medskip

\subsubsection{Periodic unperturbed environment: the homogenized limit}
\label{ssec:1D-1-periodic}

We now reinstate a non trivial (that is, non constant) periodic potential~$\ellper$ in~\eqref{eq:addition}, which, without loss of generality, we assume of period~$1$. We also assume that there exists only one point per unit interval where~$\ellper$ reaches its minimum~: $\ellper(x_0)\,=\,\inf_{\R}\ellper$. This is again just for simplicity.  We recall that we additionally assume~\eqref{eq:l0-support}-\eqref{eq:l0-downward} and the separated form~\eqref{eq:separated} of the Hamiltonian~$H$. Before we insert any defect~$\ell_0$ in the periodic environment described by~$\ellper$, we need to lay some groundwork and make explicit the homogenized limit in the absence of defect.

\medskip

\paragraph{Homogenized equation} For convenience, we now recall some basic facts.

We consider the one-dimensional equation
\begin{equation}
\label{eq:HJB-1-lper}
\upereps(x)+\left|(\upereps)'(x)\right|=\ellper\left(\frac{x}{\varepsilon}\right),
\end{equation}
(obviously a particular case of~\eqref{eq:8}) where the subscript~${\rm per}$ in   $\upereps$ refers to the fact that the right-hand side is only the \emph{periodic} cost $\ellper$. Actually, using elementary facts from the theory of viscosity solutions, $\upereps$ is indeed periodic (of period~$\varepsilon$); we will  use this property below, but at this time this is irrelevant.

The homogenized Hamiltonian~$\overline H$ arising from~\eqref{eq:HJB-1-lper} is characterized, for any~$p\in \R$, as the unique real number~$\overline H(p)$ such that there exists a periodic corrector $\chiperp$, viscosity solution to 
\begin{equation}
  \label{eq:9-1D}
 \left | p+ (\chiperp)'(y)\right |\,=\,\overline H(p)\,+\,\ellper(y) \quad \hbox{in }\R\,,
\end{equation}
which is a particular case of~\eqref{eq:9}. Mimicking the explicit calculations performed in~\cite{LPV} that address the case of the quadratic Hamiltonian~$ \left | p\right |^2\,-\,\ellper(y)$ instead of~$ \left | p\right |\,-\,\ellper(y)$, it is easy to identify the periodic solution~$\chiperp$ and the homogenized Hamiltonian
\begin{equation}
 \label{eq:1D-Ham-per}
\overline H(p)\,=\,
\left\{
\begin{array}{ll}
\displaystyle \,-\,\inf_{\R}\ellper&\text{for}\quad|p|\leq \langle\ellper\rangle\,-\,\inf_{\R}\ellper\,,\\
&\\
\displaystyle \,|p|\,-\,\langle\ellper\rangle&\text{for}\quad |p|\geq \langle\ellper\rangle\,-\,\inf_{\R}\ellper.
\end{array}
\right.
\end{equation}
It follows that the (periodic) homogenized equation, defined by~\eqref{eq:8} in whole generality, here reads as
\begin{equation*}
u\,+\,\overline H(u')\,=\,0\,,
\end{equation*}
for the particular Hamiltonian~\eqref{eq:1D-Ham-per}. We immediately notice that its unique BUC viscosity solution is the constant function
\begin{equation*}
u\,=\,\inf_{\R}\ellper\,,
\end{equation*}
which we henceforth denote by~$\uper$ to distinguish it from the homogenized limit~$u$ in the presence of defect, which we are going to manipulate shortly.

\bigskip

\paragraph{  An alternative "direct" proof} Our next step is to remark that  in the present setting,
besides being a consequence of the general periodic homogenization theory, the limit~$\displaystyle \upereps\to\,\inf_{\R}\ellper$  may alternatively and independently be obtained from the original equation~\eqref{eq:HJB-1-lper},
as soon as we have some elementary information on the solution~$\upereps$. 
The interest of following this alternative route is to better understand the phenomena at play and prepare the ground for the homogenization limit in the presence of a defect that will be addressed in Section~\ref{ssec:1D-1-defect-periodic} below.

\medskip

It is immediate to deduce from~\eqref{eq:HJB-1-lper} that its solution~$\upereps$ consists of the combination of two different branches, respectively solving
\begin{equation}
\label{eq:HJB-1-lper+}
\upereps(x)+(\upereps)'(x)\,=\ellper\left(\frac{x}{\varepsilon}\right)\;\quad\textrm{when}\;(\upereps)'(x)\geq 0
\end{equation}
and
\begin{equation}
\label{eq:HJB-1-lper-}
\upereps(x)-(\upereps)'(x)\,=\ellper\left(\frac{x}{\varepsilon}\right)\;\quad \textrm{when}\;(\upereps)'(x)\leq 0\,.
\end{equation}
We now introduce
\begin{equation}
\label{eq:gper}
\gpereps(x)\,=\,e^{-x}\,\int_{-\infty}^xe^t\,\ellper\left(\frac{t}{\varepsilon}\right)\,dt,
\end{equation}
which is not an unexpected function given the calculations in Section~\ref{ssec:1D-zero} above. It is evidently a periodic function, which satisfies~$\displaystyle \gpereps\,+\,(\gpereps)'\,=\,\ellper\left(\frac{.}{\varepsilon}\right)$ and thus $\langle \gpereps\rangle\,=\,\langle \ellper\rangle$. 
We deduce  that in the branch described by ~\eqref{eq:HJB-1-lper+},  $\upereps(x)\,=\,\gpereps(x)\,+\,\alpha\,e^{-x}$ for some constant~$\alpha$ (depending on~$\varepsilon$ but this is irrelevant). 

Should the branch~\eqref{eq:HJB-1-lper+} extend to~$+\infty$, we would therefore obtain, using periodicity and shifting to $+\infty$, that  $\upereps(x)\,=\,\gpereps(x)$ for all~$x$ in that branch. 
Since~$\langle \gpereps\rangle\,=\,\langle \ellper\rangle$ while, in view of~\eqref{eq:HJB-1-lper},  $\displaystyle \upereps\leq\ellper\left(\frac{.}{\varepsilon}\right)$ everywhere, this implies~$\displaystyle \gpereps\,=\, \ellper\left(\frac{.}{\varepsilon}\right)$. 
Given the differential equation satisfied by~$\gpereps$, this may only hold when~$\ellper$ is constant. 
We reach a contradiction with the assumption made on $\ellper$.

Similarly to~\eqref{eq:gper}, we introduce the function~$\displaystyle \hpereps(x)\,=\,e^{x}\,\int_x^{\infty}e^{-t}\ellper\left(\frac{t}{\varepsilon}\right)\,dt$, and get that in the branch~\eqref{eq:HJB-1-lper-}, $\upereps(x)\,=\,\hpereps(x)\,+\,\beta\,e^{x}$. 
It immediately follows that~\eqref{eq:HJB-1-lper-} cannot extend to~$+\infty$ either, since the asymptotic blow-up implied by the term~$e^{x}$  would contradict the boundedness of the solution.

A symmetric argument  allows to  exclude that the branches extend to~$-\infty$. We conclude that the solution~$\upereps$ consists of an effective, infinite alternation of the two branches~\eqref{eq:HJB-1-lper+} and~\eqref{eq:HJB-1-lper-}.  
In our particular periodic setting, we \emph{could} have obtained this fact simply using the uniqueness, thus the periodicity, of the viscosity solution to~\eqref{eq:HJB-1-lper}. 
But the specific argument we have developed will be useful later, in the presence of a defect. 

We next remark that, because of the generic properties of viscosity solutions, a transition, at some~$\overline x$, from the branch~\eqref{eq:HJB-1-lper-} at the left of~$\overline x$ to the branch~\eqref{eq:HJB-1-lper+} at its right, may only occur when ~$(\upereps)'$ is continuous at~$\overline x$, nonpositive before and nonnegative after~$\overline x$. 
Thus~$(\upereps)'(\overline x)\,=\,0$ and~$\overline x$ is a local minimizer of~$\upereps$. 
Using the equation, $\upereps(\overline x)\,=\,\ellper(\overline x)$. 
Since we know that~$\upereps\leq \ellper$ everywhere, the three facts altogether imply that~$\ellper$ reaches its minimum at~$\overline x$. 
For simplicity, we have assumed that such a minimizer is unique per period. So we exactly know that, per period, there is one transition~\eqref{eq:HJB-1-lper-}-\eqref{eq:HJB-1-lper+} (at the minimizer~$\overline x=\argmin\ellper$) and, thus also, one transition~\eqref{eq:HJB-1-lper+}-\eqref{eq:HJB-1-lper-} (thus necessarily \emph{above} the level~$\min_\R\ellper$).

An interesting corollary of the previous observation is the following. 
Since $\upereps\geq\inf_\R\ellper$ everywhere,  $\left|(\upereps)'\right|\,=\,\ellper\,-\,\upereps$ is thus bounded independently of~$\varepsilon$. 
It follows from these two facts together with the periodicity of  $\upereps$  (with period~$\varepsilon$) that, necessarily, $\upereps\to \inf_\R\ellper$ as~$\varepsilon$ vanishes. In dimension~1, this provides us with an independent and alternative proof of the homogenization limit, as announced above.

\subsubsection{A downward defect in this periodic environment}
\label{ssec:1D-1-defect-periodic}

We now insert the defect~$\ell_0$. The equation under consideration is 
\begin{equation}
\label{eq:HJB-1-lper-def}
u_\varepsilon(x)+\left|(u_\varepsilon)'(x)\right|=\ellper\left(\frac{x}{\varepsilon}\right)\,+\,\ell_0\left(\frac{x}{\varepsilon}\right).
\end{equation}
Using the exact same argument as above in~\eqref{eq:HJB-1-lper+}-\eqref{eq:HJB-1-lper-}, the two auxiliary functions~$ \gpereps$ and~$ \hpereps$, along with the  functions $\displaystyle\gzeroeps(x)\,=\,e^{-x}\,\int_{-\infty}^xe^{t}\ell_0\left(\frac{t}{\varepsilon}\right)\,dt$, ~$\displaystyle\hzeroeps(x)\,=\,e^{x}\,\int_x^{\infty}e^{-t}\ell_0\left(\frac{t}{\varepsilon}\right)\,dt$, we may similarly conclude that no branch can  extend to~$\pm\infty$. 
Recall indeed that $\ell_0$ vanishes at infinity so  does not modify the argument performed in the periodic case above.

We can thus claim that $u_\varepsilon$ is an infinite alternation of the two branches. Again similarly as above, we know that the junction~\eqref{eq:HJB-1-lper-}-\eqref{eq:HJB-1-lper+}  (in this order from left to right) may only occur at local minimizers of~$\displaystyle (\ellper+\ell_0)\left(\frac{.}{\varepsilon}\right)$, and that at such points, $\displaystyle u_\varepsilon\,=\,(\ellper+\ell_0)\left(\frac{.}{\varepsilon}\right)$. 
Outside~$\displaystyle \textrm{Supp}\,\ell_0\left(\frac{.}{\varepsilon}\right)$, this thus occurs at some minimizers of~$\displaystyle \ellper\left(\frac{.}{\varepsilon}\right)$, at which~$\displaystyle u_\varepsilon$ coincides with~$\ellper\left(\frac{.}{\varepsilon}\right)$. 

We next recall that, precisely at all minimizers of~$\displaystyle \ellper\left(\frac{.}{\varepsilon}\right)$, we also  have the equality~$\displaystyle \upereps\,=\,\ellper\left(\frac{.}{\varepsilon}\right)$, where $\upereps$ denotes the solution in the absence of defects. 
Thus, at any minimizer of~$\displaystyle \ellper\left(\frac{.}{\varepsilon}\right)$ outside~$\displaystyle \textrm{Supp}\,\ell_0\left(\frac{.}{\varepsilon}\right)$ where~$\displaystyle u_\varepsilon$ coincides with~$\ellper\left(\frac{.}{\varepsilon}\right)$, we \emph{also} have~$u_\varepsilon\,=\,\upereps$. Between two such minimizers, using a comparison principle for the equation, we deduce that~$u_\varepsilon\,\equiv\,\upereps$. So $u_\varepsilon\,\equiv\,\upereps$ at least everywhere outside a bounded interval containing the origin and presumably slightly exceeding~$\textrm{Supp}\,\ell_0\left(\frac{.}{\varepsilon}\right)$. 

Our next task is thus to fully identify the homogenized limit of~$u_\varepsilon$. Since we do not have as much information on~$u_\varepsilon$ as we used to have on~$\upereps$ in the previous section, we need to make a small detour (in fact by defining the homogenized problem itself and next backpedalling) .

\medskip

We introduce the function
\begin{equation}
  \label{eq:1D-limit-homog-sol}
u(x)\,=\,
\left\{
\begin{array}{ll}
\displaystyle \langle\ellper\rangle\,-\,e^{-|x|}\,\left(\langle\ellper\rangle\,-\,\inf_{\R}(\ellper\,+\,\ell_0)\right) &\text{for}\,|x|\leq \mu,\\
&\\
\displaystyle \inf_\R\ellper&\text{for}\,|x|\geq \mu\,,
\end{array}
\right.
\end{equation}
for~$\mu$ defined by
\begin{equation}
\label{eq:1D-mu}
e^{-\mu}\,=\,\frac{\langle\ellper\rangle\,-\,\inf_{\R}\ellper}{\langle\ellper\rangle\,-\,\inf_{\R}(\ellper\,+\,\ell_0)}.
\end{equation}
This function~$u$ is indeed the homogenized limit of~$u_\varepsilon$, as we will see below. 
For the time being, we just remark that
\begin{description}
\item[(i)] when~$\ell_0\equiv 0$ and more generally exactly when condition~\eqref{eq:l0-downward} is not satisfied, then~$\mu=0$ 
and $u\,=\,\inf_\R\ellper$ is indeed the homogenized limit we found for~$\upereps$. 
\item[(ii)] when~$\ellper\equiv 0$, then~$\mu\,=\,+\,\infty$, $u(x)\,=\,(\inf_{\R}\ell_0)\,e^{-|x|}$ 
and we recover the homogenized limit~\eqref{eq:HJB-lim} if we additionally assume~\eqref{eq:HJB-3} as we did in Section~\ref{ssec:1D-zero}.
\end{description}

\medskip

\noindent We also remark that~$u$ solves
\begin{equation*}
\left\{
\begin{array}{l}
\displaystyle u(x)\,+\,\overline{H}\left(u'(x)\right)=0\quad\text{for}\,x\not=0,\\
\\
\displaystyle u(0)\,=\,\inf_{\R}(\ellper\,+\,\ell_0), 
\end{array}
\right.
\end{equation*}
where~$\overline{H}$ is defined in~\eqref{eq:1D-Ham-per}.  In this specific setting, we again recover, as in Section~\ref{ssec:1D-zero}, the general limit~\eqref{eq:11}-\eqref{eq:12}-\eqref{eq:13}-\eqref{eq:14} stated in Theorem~\ref{sec:main-result-1}, this time with the specific value
\begin{equation}
\label{eq:1D-E}
E\,=\,-\,\inf_{\R}(\ellper\,+\,\ell_0)
\end{equation}
of the effective Dirichlet condition. Expression~\eqref{eq:1D-E} is obviously a generalization of~\eqref{eq:1D-E-zero}. And \eqref{eq:12} and~\eqref{eq:14} again combine with one another to yield the above Dirichlet condition, precisely because of the condition~\eqref{eq:l0-downward}. We omit here the detailed verification that the explicit value of~$E$ given in~\eqref{eq:1D-E} may indeed be independently obtained using the sequence of approximate correctors, as in the general theory using~\eqref{eq:19}-\eqref{eq:20}, similarly to what we did above in Section~\ref{ssec:1D-zero} in the particular setting where~$\ellper$ identically vanishes. 

\medskip

 In order to prove that~$u$ defined in~\eqref{eq:1D-limit-homog-sol} is indeed the limit of~$u_\varepsilon$, let us consider
  \begin{equation*}
v_\varepsilon(x)\,=\,e^{-x}\int_0^xe^t\,\ell\left(\frac{t}{\varepsilon}\right)\,dt\,+e^{-x}\, \inf_{\R}\ell.
\end{equation*}
As~$\varepsilon$ vanishes, this function pointwise (and in fact  locally uniformly) converges to
 \begin{equation*}
\langle\ellper\rangle \,(1-e^{-x})\,+e^{-x}\, \inf_{\R}\ell\, =\,u(x),
\end{equation*}
as defined by~\eqref{eq:1D-limit-homog-sol}, for all $0<x<\mu$. 
Since evidently~$\displaystyle\ellper\left(\frac{x}{\varepsilon}\right)\,\geq\,\inf_{\R}\ellper$, 
\begin{equation*}
\ellper\left(\frac{x}{\varepsilon}\right)\,-\,v_\varepsilon(x)\,\geq\,\inf_{\R}\ellper\,-\,e^{-x}\int_0^xe^t\,\ell\left(\frac{t}{\varepsilon}\right)\,dt\,-e^{-x}\, \inf_{\R}\ell,
\end{equation*}
where, as~$\varepsilon$ vanishes, the right-hand side likewise converges to
 \begin{equation*}
\inf_{\R}\ellper\,-\,\,\langle\ellper\rangle \,(1-e^{-x})\,-e^{-x}\,\inf_{\R}\ell \,=\,(\inf_{\R}\ellper\,-\,\inf_{\R}\ell)\,-\,(\langle\ellper\rangle\,-\,\inf_{\R}\ell)\,(1-e^{-x}).
\end{equation*}
Note that   this quantity is nonnegative when~$x\leq\mu$, since
\begin{eqnarray*}
&&(\inf_{\R}\ellper\,-\,\inf_{\R}\ell)\,-\,(\langle\ellper\rangle\,-\,\inf_{\R}\ell)\,(1-e^{-x})\cr
&&\hskip 2truecm\geq(\inf_{\R}\ellper\,-\,\inf_{\R}\ell)\,-\,(\langle\ellper\rangle\,-\,\inf_{\R}\ell)\,(1-e^{-\mu})\cr
&&\hskip 2truecm=0,
\end{eqnarray*}
and only vanishes at~$x=\mu$.

If we now take~$0\,<\, x\,<\,\mu$, outside~$\displaystyle\textrm{Supp}\,\ell_0\left(\frac{.}{\varepsilon}\right)$, and slightly bounded away from~$\mu$ (by some irrelevant constant, say of order~$\varepsilon$), we thus have, for~$\varepsilon$ sufficiently small,
\begin{equation*}
\ell\left(\frac{x}{\varepsilon}\right)\,-\,v_\varepsilon(x)\,\geq (\textrm{actually}\,=\,)\,\ellper\left(\frac{x}{\varepsilon}\right)\,-\,v_\varepsilon(x)\,>\,0.
\end{equation*}
If follows that, in that region, $\displaystyle(v_\varepsilon)'(x)\,=\,\ell\left(\frac{x}{\varepsilon}\right)\,-\,v_\varepsilon\,>\,0$ and thus $v_\varepsilon$ is solution to~\eqref{eq:HJB-1-lper-def}. We may proceed similarly for~$-\mu\,<\, x\,<\,0$ and the other branch, that of type~\eqref{eq:HJB-1-lper-}. The function~$v_\varepsilon$ that we have constructed may then be continued close to and beyond~$\pm\mu$. Eventually,  it yields the global solution~$u_\varepsilon$.

\begin{remark}
\label{rk:1D-compact-bis}
In echo to  Remark~\ref{rk:1D-compact}, we notice that, here, the two homogenized limits obtained respectively for $\ellper$ and~$\ellper+\ell_0$ \emph{exactly} agree outside a bounded interval surrounding the origin, unless~$\ellper$ is constant. The latter case is indeed the only case where the homogenized Hamiltonian~$\overline H$ is not flat 
around~$p=0$ (it is~$|p|$). The agreement at infinity of the two homogenized solutions, respectively in the absence and in the presence of defects, reflects this property. We will further investigate this question in higher dimensions in Section~\ref{sssec:defaut-infini} below.
\end{remark}

\subsubsection{A randomized variant}
\label{ssec:proba}

As briefly mentioned in Section~\ref{sec:introduction}, a randomized variant of the theory of local defects has been introduced by A.~Anantharaman and the second author in~\cite{Anantha-LB}. In that work, the equation under consideration was a linear elliptic equation in conservation  form. The adaptation of the setting considered therein to our equation~\eqref{eq:HJB-1-lper-def} is as follows. 

We consider as a perturbation of the underlying periodic environment encoded in~$\ellper$, the \emph{random} running cost
\begin{equation*}
\ell_S(x,\omega)\,=\,\sum_{k\in\Z}X^\eta_k(\omega)\,\ell_0(x-k),
\end{equation*}
where $\displaystyle\{X^\eta_k(\omega)\}_{k\in\Z}$ is a sequence of i.i.d. random variables that all follow a Bernoulli law of parameter~$\eta$, that is, $X^\eta_k(\omega)\,=\,0$ or $\,=\,1$ with probability $1-\eta$ and~$\eta$ respectively.  The parameter~$\eta>0$ is to be thought of as a small parameter, which will eventually be sent to zero. This models that the periodic environment encoded in~$\ellper$ is only \emph{slightly} perturbed.

After rescaling in~$\varepsilon$ of the potential~$\ell_S$, the equation for which we study homogenization thus reads as
\begin{equation*}
u_\varepsilon(x,\omega)+\left|(u_\varepsilon)'(x,\omega)\right|=\ellper\left(\frac{x}{\varepsilon}\right)\,+\,\ell_S\left(\frac{x}{\varepsilon}\,,\omega \right)\,=\ellper\left(\frac{x}{\varepsilon}\right)\,+\,\,\sum_{k\in\Z}X^\eta_k(\omega)\,\ell_0\left(\frac{x-k\varepsilon}{\varepsilon}\right).
\end{equation*}
We intend to identify the homogenized limit~$\varepsilon\to 0$ for this equation \emph{in the regime when the  parameter~$\eta$  vanishes}. Here, to keep things simple and allow for analytic calculations, we only consider  the case~$\ellper\,\equiv \,0$  of a flat unperturbed environment, that is
\begin{equation}
\label{eq:random1-flat}
u_\varepsilon(x,\omega)+\left|(u_\varepsilon)'(x,\omega)\right|=\,\sum_{k\in\Z}X^\eta_k(\omega)\,\ell_0\left(\frac{x-k\varepsilon}{\varepsilon}\right),
\end{equation}
using a "shape function" $\ell_0$ for the defects that satisfies~\eqref{eq:HJB-3}. 
\begin{remark}
Let us  emphasize that, even though the specific algebraic manipulations below depend on the technical assumption that $\ellper=0$, the general result that we are going to obtain carries over to other cases.

Extensions to  other ambient dimensions will be addressed in Section~\ref{ssec:others}.
\end{remark}

In order to understand the homogenized limit, we need a preliminary step, namely the case of \emph{two} localized (downward) defects, instead of a single one at the origin as in Section~\ref{ssec:1D-1-defect-periodic}. Intuitively, $\ell_S(x,\omega)$ is the random superposition of an infinite number of such defects. Since \emph{"He who can do more can do less"}, we need to first understand the case of two defects, that is:
\begin{equation}
\label{eq:equa-2}
z_\varepsilon(x)+\left|(z_\varepsilon)'(x)\right|=\,\ell_0\left(\frac{x}{\varepsilon}\right)\,+\,\ell_0\left(\frac{x-\varepsilon}{\varepsilon}\right).
\end{equation}
It is then easy to realize that the solution reads as
\begin{equation}
\label{eq:min-2}
z_\varepsilon(x)\,=\,\inf\left(u_\varepsilon(x)\,,\,u_\varepsilon(x-\varepsilon)\right),
\end{equation}
where~$u_\varepsilon$ is the solution for one single defect at the origin, which we made explicit in~\eqref{eq:HJB-4}.  
In order to realize that~\eqref{eq:min-2} holds, we just need to establish that, in~$\displaystyle\textrm{Supp}\,\ell_0\left(\frac{\cdot}{\varepsilon}\right)$, we have $u_\varepsilon(x)\,\leq\,u_\varepsilon(x-\varepsilon)$. A similar argument will prove a symmetric result on~$\displaystyle\textrm{Supp}\,\ell_0\left(\frac{x-\varepsilon}{\varepsilon}\right)$, while, outside the two supports, both~$u_\varepsilon(\cdot)$ and~$u_\varepsilon(\cdot-\varepsilon)$ solve~\eqref{eq:equa-2}. 
Let us recall that, because of~\eqref{eq:l0-support}, the two supports~$\displaystyle\textrm{Supp}\,\ell_0$ and $\displaystyle\textrm{Supp}\,\ell_0(\cdot -1)$ are disjoint. 
Given the explicit expression~\eqref{eq:HJB-4} of $u_\varepsilon$ and its monotonicity on~$\R^+$,  we then have that 
\begin{equation}
\label{eq:min-for1}
u_\varepsilon(x)\leq e^{-\varepsilon\,a}\left(\ell_0(0)\,+\int_0^{\varepsilon\,a}e^{t}\,\ell_0\left(\frac{t}{\varepsilon}\right)\,dt\right),
\end{equation}
for~$a\leq\frac12$ and all~$0<x<\varepsilon\,a$. On the other hand, again for all~$0<x<\varepsilon\,a$, we have, by the same argument but this time looking at the explicit expression of~$u_\varepsilon(x-\varepsilon)$ that , 
\begin{equation}
\label{eq:min-for2}
u_\varepsilon(x-\varepsilon)\geq e^{\varepsilon\,a-\varepsilon}\left(\ell_0(0)\,+\int_{\varepsilon-\varepsilon\,a}^\varepsilon e^{-(t-\varepsilon)}\,\ell_0\left(\frac{t-\varepsilon}{\varepsilon}\right)\,dt\right).
\end{equation}
Since~$a\leq\frac12$, we have $e^{\varepsilon\,a-\varepsilon}\,\leq\, e^{-\varepsilon\,a}$. On the other hand, for simplicity let us further assume that
\begin{equation}
\label{eq:even}
\ell_0\quad\textrm{is an even function}\,, 
\end{equation}
 so that the two integrals  appearing in the right-hand sides of~\eqref{eq:min-for1}
  and~\eqref{eq:min-for2}  are identical. We deduce that 
$$u_\varepsilon(x)\leq  u_\varepsilon(x-\varepsilon)\quad\textrm{for all}\;0<x<\varepsilon\,a,$$
that region being the part of~$\displaystyle\textrm{Supp}\,\ell_0\left(\frac{\cdot}{\varepsilon}\right)$ at the right of the origin. A similar argument allows to conclude for all the other regions. Note that a simple estimation of the terms in~\eqref{eq:min-for1} and~\eqref{eq:min-for2} shows that, had we not assumed~\eqref{eq:even} for simplicity, the argument above would also hold true provided the two defects are separated from a distance $q\,\varepsilon$ for an integer $q$ chosen sufficiently large. We will see a similar argument
in Section~\ref{sssec:proba-dD} below. 

\medskip

Equation~\eqref{eq:min-2}  readily implies that, returning to the random setting~\eqref{eq:random1-flat}, 
\begin{equation}
\label{eq:random-min-formula}
u_\varepsilon(x, \omega)\,=\,\inf_{k\in\Z}\left\{X^\eta_k(\omega)\,u_\varepsilon(x-k\varepsilon)\right\},
\end{equation}
since any $k\in\Z$ for which $X^\eta_k(\omega)=1$ contributes to lowering the solution, while others~$k$ do not. The "min-formula"~\eqref{eq:random-min-formula} is the main ingredient in the rest of our argument and will be generalized in Section~\ref{ssec:others}.

\medskip

\begin{remark} Note that we work here with the solution $u_\varepsilon$ of the original equation~\eqref{eq:random1-flat}, prove it explicitly reads as~\eqref{eq:random-min-formula} and proceed from there to find its homogenized limit. In fact, we could, using a similar string of arguments, identify the solution to the discounted problem
\begin{equation}
\label{eq:random1-flat-corrector}
\delta\,w_\delta(y,\omega)+\left|(w_\delta)'(y,\omega)\right|=\,\sum_{k\in\Z}X^\eta_k(\omega)\,\ell_0\left(y-k\right),
\end{equation}
for $\delta >0$, that replaces the corrector problem in the context of stochastic homogenization. The similarity between~\eqref{eq:random1-flat} and~\eqref{eq:random1-flat-corrector} is evident. Using a simple rescaling (exactly as we did in the specific deterministic case~\eqref{eq:HJB-wlR}-\eqref{eq:HJB-wlR-equation} above), any explicit expression of the solution~$u_\varepsilon$ of the former equation yields an expression of the solution~$w_\delta$ to the latter equation. Taking the limit~$\lim_{\delta\to 0} \delta\,w_\delta$, we may then identify the value ${\overline H}_\eta(0)$ of the homogenized Hamiltonian in this stochastic setting. 
Note that the same remark applies to the multidimensional context we will address in Section~\ref{sssec:proba-dD} below, but we will not repeat it there.
\end{remark}

Since we already know that, in the homogenized limit, $u_\varepsilon$ given by~\eqref{eq:HJB-4} converges to~$u(x)=\,\ell_0(0)\,e^{-|x|}$ in~\eqref{eq:HJB-lim} 
(later generalized in~\eqref{eq:1D-limit-homog-sol}), we \emph{temporarily} replace~\eqref{eq:random-min-formula} by
\begin{equation}
\label{eq:random3-flat}
u_\varepsilon(x)\,=\,\ell_0(0)\,\sup_{k\in\Z}\left\{X^\eta_k(\omega)\,e^{-|x-k\varepsilon|}\right\},
\end{equation}
where we have used that~$\ell_0(0)<0$. Take now~$x=0$. The key quantity appearing on the right-hand side of~\eqref{eq:random3-flat} is in fact the random variable
\begin{equation}
\label{eq:random-flat-Z0}
 \sup_{k\in\Z}\left\{X^\eta_k(\omega)\,e^{-|k|\varepsilon}\right\},
 \end{equation} 
and to get a grasp on this quantity, let us only consider therein the positive indices, so that we focus our attention on
\begin{equation*}
Z^\eta_\varepsilon(\omega)\,=\,\sup_{k\in\Z,k\geq 0}\left\{X^\eta_k(\omega)\,e^{-k\varepsilon}\right\}.
\end{equation*}
Since the function~$k\to e^{-k\varepsilon}$ is decreasing with respect to $k>0$,   it is evident that~$Z^\eta_\varepsilon\,=\,e^{-k\varepsilon}$ for the first index~$k$ such that~$X^\eta_k=1$. But now, since the $X^\eta_k$ are i.i.d. and all follow a Bernoulli law, it is well known that the law of this first index is geometric. More precisely, we have
\begin{equation}
\label{eq:random-flat-Z2}
Z^\eta_\varepsilon(\omega)\,=\,e^{-k\varepsilon}\quad\textrm{with probability}\;(1-\eta)^k\eta.
\end{equation}
If we now let~$\varepsilon\to 0$, we obtain $\displaystyle Z^\eta_\varepsilon(\omega)\,\to\, 1$ almost surely.
It is easy to see that a similar argument applies to~\eqref{eq:random-flat-Z0}, since only~$|k|$ matters there and~$\max(X^\eta_{-k},X^\eta_k)$ are i.i.d. Bernoulli variables of parameter~$1-(1-\eta)^2$, which scales as~$\eta$, when~$\eta$ is small. Thus, eventually we may guess that
\begin{equation*}
u_\varepsilon(0, \omega)\,\to\,\ell_0(0)
\end{equation*}
almost surely in the  limit~$\varepsilon\to 0$. The actual proof of this fact necessitates to realize that we have replaced $u_\varepsilon$ given by~\eqref{eq:HJB-4} by its homogenized limit~$u(x)=\,\ell_0(0)\,e^{-|x|}$. The error committed in this approximation may easily be controlled (at least in this case where everything is explicitly known by the formulae~\eqref{eq:HJB-4} and~\eqref{eq:HJB-lim}) and proven to be irrelevant in the limit process above.  In addition, the asymptotics that we have found for $x=0$ in~\eqref{eq:random3-flat} readily carry over to any~$x\in \R$, since all what matters is the  decay of the function~$k\to e^{-|x-k\varepsilon|}$ away from $x$. We have thus obtained
\begin{equation}
\label{eq:random4-flat}
\textrm{a.s.}\,\lim_{\varepsilon\to 0}u_\varepsilon(x, \omega)\,=\,\ell_0(0).
\end{equation}
The convergence~\eqref{eq:random4-flat} is both \emph{disappointing} and \emph{intuitive}. 
It is disappointing because, the homogenized limit being flat, there is not much interesting  mathematical phenomenon to discuss. The convergence~\eqref{eq:random4-flat} is, on the other hand,  intuitive, because as soon as a defect occurs at a location~$k\varepsilon$ for some index~$k$ such that $k\varepsilon\to 0$ in the homogenized limit, this defect is brought to the origin by the rescaling and everything happens \emph{as if} we had a deterministic localized defect there. For instance, even the  probability of having \emph{at least one} defect in the~$\displaystyle\frac{1}{\sqrt\varepsilon}$  first indices (say at the right of the origin)  is
\begin{equation}
\label{eq:random41-flat}
\displaystyle \sum_{0\leq k\leq\frac{1}{\sqrt\varepsilon}}(1-\eta)^k\,\eta\,\approx\, 1- (1-\eta)^{1/{\sqrt\varepsilon}}\,\buildrel \varepsilon\to 0\over\longrightarrow \,1,
\end{equation}
for $\eta$ fixed, so all this happens with probability one.
A similar argument again applies if we consider both sides of the origin, and likewise when we consider any point~$x$ since all points play identical roles. In the end, the limit~\eqref{eq:random4-flat} is flat. It is also independent of~$\eta$.

This comes in sharp contrast to the case of a similar random coefficient inserted in an elliptic equation, which is the setting studied in the work~\cite{Anantha-LB}.  This striking difference may be intuitively explained by the fact that, at the macroscopic scale, an elliptic equation only \emph{sees} suitable \emph{averages}  of the oscillatory coefficient, while an Hamilton-Jacobi type equation sensitively \emph{sees extremal values} of that coefficient. Put in an even more simplified language, what we observe is nothing but the difference between
\begin{equation*}
\lim_{N\to\infty}\frac1N\sum_{k=1}^NX^\eta_k(\omega)\quad\textrm{and}\quad\,\lim_{N\to\infty}\sup_{1\leq k\leq N}\left\{X^\eta_k(\omega)\right\},
\end{equation*}
for the i.i.d. Bernoulli random variables~$X^\eta_k$ of parameter~$\eta$. The leftmost limit scales as~$\eta$ while the rightmost limit is independent of~$\eta$ . 

\medskip

The net conclusion of the above discussion is that, in the model we have considered so far, the probability of having defects is so large that, in the limit, there are almost surely defects everywhere. In order to get a much more interesting regime, we need to relate the small parameter~$\varepsilon$, which measures the distances at the microscopic scale, with the parameter~$\eta$, which measures the \emph{amount of random perturbation present in the environment} (recall that, for $\eta=0$, we recover the unperturbed environment). A possible way to obtain this property is to ensure that, for instance, there is, asymptotically, a fixed, small, proportion of defects per unit \emph{macroscopic} length (or, in higher dimensional settings, volume). More precisely, we now take
\begin{equation}
\label{eq:random5-flat}
\eta\,=\,\overline{\eta}\,\varepsilon,\quad\textrm{for}\quad\overline{\eta}\quad\textrm{fixed}.
\end{equation}
The scaling law~\eqref{eq:random5-flat} is precisely adjusted so that, in contrast to what we observed above in~\eqref{eq:random41-flat} for $\eta$ fixed, the probability of seeing at least one defect over a unit macroscopic length around the origin scales as
\begin{equation}
\label{eq:random51-flat}
\displaystyle \sum_{0\leq k\leq\frac1\varepsilon}(1-{\overline\eta}\varepsilon)^k\,{\overline\eta}\varepsilon\,\approx\,1- (1-{\overline\eta}\varepsilon)^{1/\varepsilon}\,\buildrel \varepsilon\to 0\over\longrightarrow \,1- e^{-{\overline\eta}}\,\buildrel{\overline\eta}\to 0\over\approx{\,\overline\eta}\,\ll\,1.
\end{equation}
If we revisit our calculations above with the particular value~\eqref{eq:random5-flat}, we may mimic the argument step by step. We realize that the key quantity from~\eqref{eq:random-flat-Z2} is now
\begin{equation}
\label{eq:random-flat-Z3}
Z^\eta_\varepsilon(\omega)\,=\,e^{-k\varepsilon}\quad\textrm{with probability}\;(1-{\overline\eta}\varepsilon)^k\,{\overline\eta}\,\varepsilon.
\end{equation}
A simple calculation shows that, for all~$\mu\geq 0$,
\begin{equation*}
{\mathbb P}\left(Z^\eta_\varepsilon\leq e^{-\mu}\right)\,=\,(1-{\overline\eta}\varepsilon)^{\mu/\varepsilon}\,\approx\,e^{-{\overline\eta}\,\mu},\quad\textrm{as}\quad\varepsilon\to 0,
\end{equation*}
that is, for all $0\leq t\leq 1$,
\begin{equation}
{\mathbb P}\left(Z^\eta_\varepsilon\leq t\right)\,\approx\,t^{\overline\eta},\quad\textrm{as}\quad\varepsilon\to 0.
\end{equation}
Upon differentiating with respect to~$t$, we obtain that, in the limit~$\varepsilon\to 0$, the law is~$\displaystyle{\overline \eta}\,{t^{\overline\eta-1}}$. This suffices to establish that the limit is not deterministic. Put differently, (i) when~$\eta>0$ is fixed (or~$\eta\gg\varepsilon$) the limit~$u_\varepsilon$  is deterministic and flat at the level~$\ell_0(0)$, (ii) when ~$\eta=0$ (or~$\eta\ll\varepsilon$) the limit~$u_\varepsilon$  is deterministic and flat at the level~$0$ (that is the unperturbed solution), and (iii) when exactly $\eta$ scales as~$\overline{\eta}\varepsilon$ in~\eqref{eq:random5-flat}, then the limit is flat, and its value is a random variable in the interval $[\ell_0(0),0]$.

\medskip

\subsection{Generalization to higher dimensions}
\label{ssec:others}

\medskip

Motivated by the phenomena we have just observed in the one-dimensional setting of  Section~\ref{sec:1D}, we consider  two specific questions in a higher dimensional situation: 
\begin{description}
\item[(i)] the behavior when $|x|\to +\infty$ of the homogenized solution in presence of a defect at the origin, as compared to that of the homogenized solution in the unperturbed periodic setting
\item[(ii)]  the random superposition of point defects.
\end{description}
The two issues are respectively studied in Sections~\ref{sssec:defaut-infini} and~\ref{sssec:proba-dD} below.

\medskip

\subsubsection{Influence of the defect at infinity}
\label{sssec:defaut-infini}

\medskip

We return to the general setting of Section~\ref{sec:setting} with, in ambient dimension~$d\geq 1$, a general Hamiltonian~$H$ (defined in~\eqref{eq:1}) satisfying the classical properties made precise in that section, in particular~\eqref{eq:2} through~\eqref{eq:4} and that is the perturbation of a periodic Hamiltonian~$\Hper$ (defined in~\eqref{eq:5}) in the following sense: $f=\fper$ and~$\ell=\ellper$ outside a neighborhood of the origin.

We denote by~$u$ the homogenized solution provided by Theorem~\ref{sec:main-result-1}. In order to avoid any confusion, we denote by $\uper$ the homogenized solution associated with the \emph{periodic} Hamiltonian~$\Hper$ in equation~\eqref{eq:8}. We now intend to prove that
\begin{equation}
\label{eq:conv-infini}
\left|u(x)\,-\,\uper(x)\right|\,\to\,0\,\quad\textrm{as}\quad |x|\to +\infty.
\end{equation}
This property of course mathematically encodes that the local defect (that is, the difference between $(f,\ell)$ and~$(\fper,\ellper)$ assumed compactly supported near the origin) only affects the homogenized solution \emph{locally}. 
In Section~\ref{sec:1D}, we have seen that it is  true in the one-dimensional setting and for the separate Hamiltonian considered,  and  even that,  if $\ellper$ is not constant, then $u(x)\,=\,\uper(x)$ outside a neighborhood of the origin.

\medskip

The proof of~\eqref{eq:conv-infini} turns out to be rather simple. 
Let us consider, say, the first canonical vector~$\mathbf{e}_1$ in $\R^d$, and the function~$u_n(x)\,=\,u(x\,-\,n\,\mathbf{e}_1)$ obviously solution to the same problem as~\eqref{eq:11} through~\eqref{eq:14}, but with an effective Dirichlet condition set at the point~$x_n\,=\,\,n\,\mathbf{e}_1$ instead of the origin. 
The sequence of functions~$u_n$ inherits, uniformly in~$n\in\N$, of the properties of~$u$ itself. We therefore have uniform almost everywhere pointwise bounds on~$u_n$ and $Du_n$. 
It follows that, up to an extraction (and a diagonal argument) which we henceforth omit to explicitly denote, $u_n$ locally uniformly converges to some function~$v$. 
Fix now a unit ball~$B_1(y_0)$ around an arbitrary point~$y_0\in\R^d$. For any sufficiently large~$n\in\N$, the point~$x_n$ lies outside that ball, and the function $u_n$ thus solves (in the viscosity sense) the periodic homogenized equation~\eqref{eq:11}, that is 
\begin{equation*}
 \alpha u_n+\overline H(Du_n)= 0
\end{equation*}
on that ball.  
Given the above convergence and using the result of stability of viscosity solutions for that equation, so does the limit~$v$. By uniqueness of the bounded uniformly continuous viscosity solution to~\eqref{eq:11} posed on the entire space~$\R^d$, we thus know that~$v=\uper$. 
Since  the limit is unique, we conclude that the sequence~$u_n$ itself converges to $\uper$ uniformly locally. The convergence~\eqref{eq:conv-infini} follows.

\medskip

Two remarks are in order.

\medskip

First, we note that we have only used the homogenized form of the equation provided by Theorem~\eqref{sec:main-result-1} and \emph{not} the assumption \emph{itself} that  $f=\fper$ and~$\ell=\ellper$ outside a neighborhood of the origin. So, if Theorem~\ref{sec:main-result-1} turns out to hold true for some defect $\ell_0$ that does not necessarily have compact support (or alternatively and more generally,  in some specific settings, for  more general  $(f,\ell)$ that converge to $(\fper,\ellper)$ in a milder sense), the convergence~\eqref{eq:conv-infini} still holds true. 
On the other hand and as briefly mentioned above, unless peculiar conditions are met, there is no reason for $u$ and $\uper$ to exactly agree outside a bounded domain. 

\medskip

Second, both above facts may be intuitively understood when resorting to the optimal control interpretation of the equations and solutions at play (in the spirit of our arguments of Section~\ref{sec:function--underline}). 
When starting farther and farther away from the defect located at the origin, the optimal trajectory 
has lesser reason to visit the neighborhood of the origin. As~$|x|\to +\infty$, the value function therefore becomes  decreasingly affected by the presence of the defect. Thus the convergence~\eqref{eq:conv-infini}. 
Additionally and depending upon the specific landscape, such an excursion toward the origin may come, or not, at a price. 
If for points $x$ located far enough from the origin,  this price becomes too high, then the  optimal paths leaving $x$
do not visit the support of the defect. In this case, the exact equality~$u=\uper$ may hold outside a bounded domain. 
This is  what happens for a nontrivial one-dimensional setting as that of Section~\ref{ssec:1D-1-periodic}, but not for that of Section~\ref{ssec:1D-zero} or for a higher dimensional setting where some specific paths escaping from the wells of $\ellper$ can be cost-advantageous. 
In the latter situations, only~\eqref{eq:conv-infini} holds.

\medskip

\subsubsection{Randomized variant}
\label{sssec:proba-dD}

\medskip

We devote this section to randomized lattices of points defects in $\R^d$, $d>1$.  We focus our attention to the case when the background environment is constant. The extension to random perturbations of periodic Hamiltonians is left for future work, because  it is not obvious that a counterpart of formula \eqref{eq:random-min-formula}  holds in this situation.

\paragraph{Radially symmetric situations}

We consider the Hamiltonians
\begin{equation}
  H\left(y,p\right)=  \overline H (p)\,-\, \ell_0\left(y\right),
\end{equation}
and for each~${\mathbf k}\in\Z^d$,
\begin{equation}
\label{eq:random-H-2D-k}
H_{{\mathbf k}}^\varepsilon(x\,,\,p\,,\,\omega)\,=
\,\, \overline H (p)\,-\, \ell_0\left(\frac{{x-\varepsilon\,{q}\,{\mathbf k}}}{\varepsilon}\right) X^\eta_{{\mathbf k}}(\omega),
\end{equation}
where $\displaystyle\left\{X^\eta_{{\mathbf k}}(\omega)\right\}_{{\mathbf k}\in\Z^d}$ is a set of i.i.d. real valued, Bernoulli random variables of parameter~$\eta$, that is, a multi-dimensional analogue of the sequence introduced in Section~\ref{ssec:proba}.  We consider the case when
\begin{itemize}
\item $\overline H$ is a $C^2$, convex, globally Lipschitz and  radially symmetric function of $p$ (it therefore reaches its minimum at $p=0$)
\item $\ell_0$ is smooth, non positive and radially symmetric function,
  supported in the unit  ball centered at the origin, and such that $\argmin \ell_0=\{0\}$ and $\ell_0(0)<0$
  \item  the ergodic  constant $E$ defined in Section~\ref{sec:ergod-const-assoc} satisfies
\begin{equation}
\label{eq:E-inf-H0}
E\,>\,\overline H (0).
\end{equation}
 We will see later that \eqref{eq:E-inf-H0} is in fact a consequence of the assumptions made on $\ell_0$.
\end{itemize}
The Hamiltonian~$H_{{\mathbf k}}^\varepsilon$ corresponds to a rescaled Hamiltonian similar to~$\displaystyle H\left(\frac{x}{\varepsilon},p\right)$, when the local defect (originally set at the origin) is shifted at each of the positions~$\varepsilon\,{q}\,{\mathbf k}$, and only occurs there with the probability~$\eta$, encoded in the random variable~$X_{{\mathbf k}}(\omega)$. The parameter~${q}$ is a positive integer that will be adjusted in the course of the proof, see~\eqref{eq:84} below.
We then form the random Hamiltonian
\begin{equation}
\label{eq:random-H-2D}
H_S^\varepsilon(x\,,\,p\,,\,\omega)\,=\,\sup_{{\mathbf k}\in\Z^d} H_{{\mathbf k}}^\varepsilon(x\,,\,p\,,\,\omega),
\end{equation}
which  also reads as
\begin{equation}
\label{eq:random-H-2D-separate}
H_S^\varepsilon(x\,,\,p\,,\,\omega)\,=\,\,\overline H(p)\,-\,\sum_{{\mathbf k}\in\Z^d}X^\eta_{\mathbf k}(\omega)\,\ell_0\left(\frac{x-\varepsilon\,{q}\,{\mathbf k}}{\varepsilon}\right).
\end{equation}
This is the $d$-dimensional counterpart of the one-dimensional Hamiltonian considered in Section~\ref{ssec:proba}.
The expression~\eqref{eq:random-H-2D-separate} proceeds from the definition~\eqref{eq:random-H-2D-k}-\eqref{eq:random-H-2D} along with the observation that the supports of the two functions~$\displaystyle \ell_0\left(\frac{x-\varepsilon\,{q}\,{\mathbf k}}{\varepsilon}\right)$ and~$\displaystyle \ell_0\left(\frac{x-\varepsilon\,{q}\,{\mathbf k'}}{\varepsilon}\right)$ for different indices~${\mathbf k}\,\not=\,{\mathbf k'}$ do not overlap and the latter two functions are nonpositive, so, for all~$x\in\R^d$ and~$\varepsilon>0$, hence
$$\ell_0\left(\frac{x-\varepsilon\,{q}\,{\mathbf k}}{\varepsilon}\right)\,+\, \ell_0\left(\frac{x-\varepsilon\,{q}\,{\mathbf k'}}{\varepsilon}\right)\,=\,\min\left[\ell_0\left(\frac{x-\varepsilon\,{q}\,{\mathbf k}}{\varepsilon}\right)\,,\, \ell_0\left(\frac{x-\varepsilon\,{q}\,{\mathbf k'}}{\varepsilon}\right)\right].$$

\medskip

We now consider, for almost all~$\omega$, the equation
\begin{equation}
\label{eq:random-HJ-equ}
\alpha\,{U}_\varepsilon(x,\omega)\,+\,H_S^\varepsilon(x\,,\,D{U}_\varepsilon\,,\,\omega)\,=\,0.
\end{equation}
We intend to first prove that this equation has a unique, bounded uniformly continuous solution ${U}_\varepsilon(x,\omega)$ in the viscosity sense, and next study the limit of this solution as~$\varepsilon$ vanishes. For this purpose, under a suitable assumption on $q$, we will in fact explicitly characterize the solution~${U}_\varepsilon$ as
\begin{equation}
\label{eq:random-HJ-U}
{U}_\varepsilon(x,\omega)\,=\,\inf_{{\mathbf k}\in\Z^d}\left\{X^\eta_{\mathbf k}(\omega)\,u_\varepsilon(x-\varepsilon\,{q}\,{\mathbf k})\right\},
\end{equation}
where~$u_\varepsilon(x)$ is the unique viscosity solution in $\rm{BUC}(\R^d)$ to~\eqref{eq:7}, that is
\begin{equation*}
\alpha u_\varepsilon+ H\left(\frac x \varepsilon, D u_\varepsilon\right)=0\quad \hbox{ in }\R^d.
\end{equation*}
The expression~\eqref{eq:random-HJ-U} evidently generalizes~\eqref{eq:random-min-formula} to the present multi-dimensional context.

In order to establish~\eqref{eq:random-HJ-U}, it is obviously sufficient to understand the setting where only two defects are present and are respectively localized, say, at the origin~$x_0=0$  and at $x_{\mathbf k}=\varepsilon\,{q}\,{\mathbf k}$,  for some~${\mathbf k}\in\Z^d\setminus\{0\}$. We thus need to manipulate the two Hamiltonians
$H^\varepsilon_{\mathbf k}$ and $H^\varepsilon_0$ (the latter is obtained by setting ${\mathbf k}=0$ in \eqref{eq:random-H-2D-k}). 
In this simplified setting, we claim that the solution to 
\begin{equation}
\label{eq:random-HJ-equ-simplifie}
\alpha\,{V}_\varepsilon(x,\omega)\,+\,\max\left\{H^\varepsilon_0\,,\,H^\varepsilon_{\mathbf k}\right\}\left(x,D{V}_\varepsilon,\omega\right)
\,=\,0
\end{equation}
actually reads as
\begin{equation}\label{eq:random-HJ-U-simplifie}
  {V}_\varepsilon(x,\omega)\,=\,\min
  \biggl\{ X^\eta_0(\omega)\,u_\varepsilon(x)\,+\,(1-X^\eta_0(\omega))\,\overline u\, ,
                                          X^\eta_{\mathbf k}(\omega)\,u_\varepsilon(x-\varepsilon\,{q}\,{\mathbf k})\,+\,(1-X^\eta_{\mathbf k}(\omega))\,\overline u\biggr\},
                                        \end{equation}
                                        where $u_\varepsilon$  solves~\eqref{eq:7}
                                        in the sense of viscosity, and the constant function $\overline u\,=-\overline H(0)/\alpha$ corresponds to the situation when there is no defect.

If we temporarily admit that~\eqref{eq:random-HJ-U-simplifie} holds true, then the generalization to infinitely many randomized defects is immediate and yields~\eqref{eq:random-HJ-U}, where we note that all the terms in $(1-X^\eta_{{\mathbf k}}(\omega))\overline u$ originally present in~\eqref{eq:random-HJ-U-simplifie} may be discarded because, with full probability, one at least of the $X^\eta_{{\mathbf k}}(\omega)$, ${\mathbf k}\in\Z^d$, has value one and $u_\varepsilon(\cdot-\varepsilon\,{q}\,{\mathbf k})\leq \overline u$ by the comparison principle (using
the non-positiveness of  $\ell_0$).

\medskip

In order to establish~\eqref{eq:random-HJ-U-simplifie}, we proceed as follows. 
First, when~$X^\eta_0(\omega)\,=\,X^\eta_{\mathbf k}(\omega)\,=\,0$,
$H^\varepsilon_0(\cdot,p,\omega)\,=\,H^\varepsilon_{\mathbf k}(\cdot,p,\omega)\,=\,\overline H\left(p\right)$, the constant function  $\overline u$ is the solution to~\eqref{eq:random-HJ-equ-simplifie} and~\eqref{eq:random-HJ-U-simplifie} holds. 
Second, if $X^\eta_0(\omega)\,=\,1$ while~$X^\eta_{\mathbf k}(\omega)\,=\,0$, then~$\displaystyle H^\varepsilon_0 (\cdot, p,\omega)\,=H\left(\frac{\cdot}{\varepsilon},p\right)$ and~$H^\varepsilon_{\mathbf k}  (\cdot, p,\omega)\,=\,\overline H\left(p\right)$, thus, given the non-positiveness of $\ell_0$,
\begin{displaymath}
\max\left\{H^\varepsilon_0  (\cdot, p,\omega) \,,\,H^\varepsilon_{\mathbf k} (\cdot, p,\omega)\right\}\,=\,H\left(\frac{\cdot}{\varepsilon},p\right)  
\end{displaymath}
 and the solution is indeed $u_\varepsilon$. We conclude "symmetrically" when $X^\eta_0(\omega)\,=\,0$ and~$X^\eta_{\mathbf k}(\omega)\,=\,1$. The only interesting case is the third one, when~$X^\eta_0(\omega)\,=\,X^\eta_{\mathbf k}(\omega)\,=\,1$, and we wish to prove that
\begin{equation}
\label{eq:random-HJ-V-simplifie}
{V}_\varepsilon(x,\omega)\,=\,\min\biggl\{\,u_\varepsilon(x)\,,\,u_\varepsilon(x-\varepsilon\,{q}\,{\mathbf k})\biggr\}.
\end{equation}
To address the latter case, we partition $\R^d$ into three non-overlapping regions, namely ${\mathcal A}_0^\varepsilon\,=\,\displaystyle \textrm{Supp}\;\ell_0\left(\frac{\cdot}{\varepsilon}\right)$, $\displaystyle {\mathcal A}_{\mathbf k}^\varepsilon\,=\,\textrm{Supp}\;\ell_0\left(\frac{\cdot}{\varepsilon}\,-\,{q}\,{\mathbf k}\right)\,=\,\varepsilon\,{q}\,{\mathbf k}\,+\,\textrm{Supp}\;\ell_0\left(\frac{\cdot}{\varepsilon}\right)$, and the remaining part $\displaystyle \left({\mathcal A}_0^\varepsilon\,\cup\, {\mathcal A}_{\mathbf k}^\varepsilon\right)^c$  of~$\R^d$.

 From the non-positiveness of $\ell_0$, we notice that
 \begin{equation}
 \label{eq:0-1-01c}
\max\left\{H^\varepsilon_0\,,\,H^\varepsilon_{\mathbf k}\right\}\,=\,
\left\{
\begin{array}{ll}
H^\varepsilon_0&\textrm{in}\;{\mathcal A}_0^\varepsilon\,,\\
H^\varepsilon_{\mathbf k}&\textrm{in}\;{\mathcal A}_{\mathbf k}^\varepsilon\,,\\
\displaystyle H^\varepsilon_0\,=\,H^\varepsilon_{\mathbf k}\,=\,\overline H&\textrm{in}\;\left({\mathcal A}_0^\varepsilon\,\cup\, {\mathcal A}_{\mathbf k}^\varepsilon\right)^c\,.
\end{array}
\right.
\end{equation}
We are going to prove that 
for $\varepsilon$ sufficiently small, 
 \begin{equation}
  \label{eq:0-1-sol}
\left\{
\begin{array}{ll}
u_\varepsilon(x)\,\leq\,u_\varepsilon(x-\varepsilon\,{q}\,{\mathbf k})&\textrm{in}\;{\mathcal A}_0^\varepsilon\,,\\
u_\varepsilon(x-\varepsilon\,{q}\,{\mathbf k})\,\leq\,u_\varepsilon(x)&\textrm{in}\;{\mathcal A}_{\mathbf k}^\varepsilon\,.\\
\end{array}
\right.
\end{equation}
 Assume temporarily that \eqref{eq:0-1-sol} is true.
Since~$u_\varepsilon$ and $u_\varepsilon(\cdot-\varepsilon\,{q}\,{\mathbf k})$ are respectively viscosity solutions to~\eqref{eq:7} and that same equation translated from~$-\varepsilon\,{q}\,{\mathbf k}$, and  since they are both locally Lipschitz continuous, we know from~\cite[Prop. 1.9, Chapter I]{MR1484411} that they are solutions almost everywhere of those equations, respectively. 
 It follows from~\eqref{eq:0-1-01c} and~\eqref{eq:0-1-sol}  together, that~$\displaystyle \min\bigl\{\,u_\varepsilon(x)\,,\,u_\varepsilon(x-\varepsilon\,{q}\,{\mathbf k})\bigr\}$ is solution almost everywhere to~\eqref{eq:random-HJ-equ-simplifie}. 
 We also know that this minimum is a viscosity supersolution of the equation. Finally,  since the minimum of two Lipschitz continuous functions is also Lipschitz continuous and since the Hamiltonian is convex, we know from~\cite[Prop. 5.1, Chapter II]{MR1484411} that this almost everywhere, Lipschitz continuous, solution is also a viscosity subsolution. We thus conclude  that it is a viscosity solution and that~\eqref{eq:random-HJ-V-simplifie}. Consequently,  if~\eqref{eq:0-1-sol} holds for all~${\mathbf k}\in \Z^d \setminus\{0\}$, then~\eqref{eq:random-HJ-U} holds.
 
\medskip

The inequations in \eqref{eq:0-1-sol} in fact only rely upon an interplay between the properties of the solution $u_\varepsilon$ for the problem with \emph{one} defect (essentially that $u_\varepsilon(x)$ grows as $x$ departs from the defect) and the distance between the \emph{two contiguous} defects considered. At this point, we may assume  that $\alpha=1$, just for alleviating notation.

To start with, we observe that the  solution $u$ to the Dirichlet problem~\eqref{eq:11}-\eqref{eq:14} in the sense of Theorem~\ref{sec:main-result-1}, (a) is radially symmetric and (b) is a strictly increasing function of the radius $|x|$. The radial symmetry comes from the uniqueness of that solution stated in Theorem~\ref{sec:main-result-1}. The increasing character comes from our assumption~\eqref{eq:E-inf-H0}. 
Indeed, because of this assumption and~\eqref{eq:12}, we have 
$u(0)\,\leq\, -E\,<\, -{\overline H}(0)$. 
By  continuity,  
$ u(x)\,<\, -{\overline H}(0)$  on a neighborhood $B_R(0)$ for some radius~$R>0$. 
The continuity of $\overline H$ then implies that $|D u(x)|\geq \delta >0$ for some $\delta >0$ and almost all $x\in B_R(0)$. By radial symmetry, this amounts to~$\displaystyle\left|\frac{\partial u}{\partial r}\right|\geq \delta >0$ for almost all~$r\leq R$. But the notion of viscosity solution then implies that, on $B_R(0)$, $\displaystyle\frac{\partial u}{\partial r}$, which is strictly positive around the origin (given that we have already established that $u$ reaches  its global infimum there) can change sign and become negative at most once. If this is the case, then $u$ decreases on the region where ~$\displaystyle\frac{\partial u}{\partial r}\leq\,- \delta$, so the inequality 
$u(x)\,<\, -{\overline H}(0)$ is all the more true. We may thus enlarge the ball $B_R(0)$ and since $|D u|$ cannot be arbitrarily small on this region, the viscosity solution~$u$ cannot become increasing again. Therefore, $u$ remains decreasing as~$|x|$ grows to infinity. On the other hand, we also already know from~\eqref{eq:conv-infini} that $
\left|u(x)\,-\,\overline u\right|\,\to\,0$ at infinity and from the non-positiveness of~$\ell_0$ that $u\leq \overline u$, so we reach a contradiction. In conclusion,  $\displaystyle\frac{\partial u}{\partial r}\geq \delta >0$ everywhere outside the origin and $u$ is indeed a strictly increasing, radially symmetric function.

\medskip

We now return to $u_\varepsilon$, for which we will apply a similar argument, using the uniformity of properties with respect to $\varepsilon$. To start with, we notice that, because $\ell_0$ is assumed radially symmetric and $\overline H$ is isotropic, $u_\varepsilon$  is also radially symmetric.   Next,  since $\ell_0$ is nonpositive, comparison  yields as above that $u_\varepsilon\,\leq\, \overline u \,=\,-\,\overline{H}(0)$ in ~$\R^d$. On the other hand, by Theorem~\ref{sec:main-result-1}, $u_\varepsilon(0)\to u(0)\,\leq\,-E<-\overline{H}(0)$.

Let us fix an arbitrary positive constant $\beta$ such that $0\,<\,2 \beta\,< \,E-\,\overline{H}(0)$. There exists a positive radius $R_0$ such that
$u (x) < -\overline H (0) -2 \beta$ on $\overline {B_{R_0} (0)}$. From the uniform convergence of  $u_{\varepsilon}$ to $ u$ on  $\overline {B_{R_0} (0)}$, we know that there exists $\varepsilon_0>0$ such that for any $\varepsilon$ such that $0< \varepsilon\le \varepsilon_0$,
\begin{equation}\label{eq:77}
 u_\varepsilon (x) < -\overline H (0) - \beta, \quad  \hbox{ on }\overline {B_{R_0} (0)}.
\end{equation}

\paragraph{Step 1} Let us start by studying $u_\epsilon$ in  $B_{R_0}(0)\setminus B_\varepsilon(0)$.
Given that $H_\varepsilon$ coincides with
$\overline H$ outside  $\displaystyle \textrm{Supp}\;\ell_0\left(\frac{\cdot}{\varepsilon}\right)$, \eqref{eq:77} implies that there exists some $\delta >0$, independent of~$\varepsilon\leq\varepsilon_0$, such that
\begin{equation}
\label{eq:borne-en-dessous-Dueps-prebis}
|Du_\varepsilon(x)|\,\geq \,\delta \,>\,0,\quad\textrm{for almost all}\;\varepsilon\,\leq |x|\,\leq\,R_0.
\end{equation}
Therefore, for almost all $\varepsilon\,\leq |x|\,\leq\,R_0$,
\begin{equation}
\label{eq:modulebis}
\displaystyle \left|\frac{\partial u_\varepsilon}{\partial r}\right|\geq \,\delta.
\end{equation}
 Since we know,  from the notion of viscosity solution and from~\eqref{eq:borne-en-dessous-Dueps-prebis}, that $Du_\varepsilon$ cannot be arbitrarily small on  $B_{R_0}(0)\setminus B_\varepsilon(0)$,  $\displaystyle \frac{\partial u_\varepsilon}{\partial r}$ may change sign at most once in this ring, and if it is the case, then the jump is from positive to negative as $|x|$ grows.

 Hence, if $\lim_{r\to \varepsilon_-} \displaystyle \frac{\partial u_\varepsilon}{\partial r} (r) <0$, then   $\displaystyle \frac{\partial u_\varepsilon}{\partial r}\le -\delta$ in  $B_{R_0}(0)\setminus B_\varepsilon(0)$.  This implies that we may find a larger $R_0$ still satisfying $u_\varepsilon(x)\,\leq -\overline{H}(0) -\beta$ for $x \in B_{R_0}(0)\setminus B_\varepsilon(0) $.  Repeating the argument, we obtain that we may indeed choose $R_0=+\infty$ and that $\lim _{|x|\to \infty} u_\varepsilon(x)=-\infty$, which contradicts the convergence of  $u_\varepsilon$  to $-\overline H(0)$ at infinity.

Therefore, $ \lim_{r\to \varepsilon_-} \displaystyle \frac{\partial u_\varepsilon}{\partial r} (r) >0$ and $\displaystyle \frac{\partial u_\varepsilon}{\partial r}$ may change sign at most once on \\ $B_{R_0}(0) \setminus {B_\varepsilon(0)} $ to become negative, say at some radius~$R_1\,<\, R_0$, where (this is therefore the only possible case) it jumps: \begin{equation}
\label{eq:saut}
\left\{
\begin{array}{ll}
\displaystyle \frac{\partial u_\varepsilon}{\partial r}\,\geq\,\delta &\textrm{for}\;\varepsilon\leq |x|< R_1,\\
\displaystyle \frac{\partial u_\varepsilon}{\partial r}\,\leq\,-\,\delta &\textrm{for}\;R_1< |x|\leq R_0.
\end{array}
\right.
\end{equation}
We claim that this situation cannot occur. Indeed, if it was the case, then $u_\varepsilon(x)\,< -\overline{H}(0) -\beta$ for $|x|=R_0$ and we would be able to  enlarge $R_0$ while keeping the inequality true. Then  $u_\varepsilon$ would stay decreasing in the whole region~$|x|\geq R_1$, which again would contradict  the convergence of $u_\varepsilon$ to $-\overline H (0)$ at infinity.

To summarize, we have proven that
\begin{equation}
  \label{eq:78}
\displaystyle \frac{\partial u_\varepsilon}{\partial r}\ge \delta, \quad  \hbox{ in  the ring } \{ x: \varepsilon \le |x| \le R_0\}.
\end{equation}

\paragraph{Step 2} Let us now study $u_\varepsilon$  in $B_\varepsilon(0)$.

Assume first that $u_\varepsilon$ has a  local maximum at $0$.
Then,  $u_{\varepsilon}(0)\le -\overline H (0) +\ell_0(0)$, and the function  $u_{\varepsilon}$ cannot have a  minimum at some $\underline x$, $0<|\underline x|<\varepsilon$. Indeed, if it was the case,  $u_{\varepsilon}(\underline x)\ge -\overline H (0) +\ell_0(\underline x)> -\overline H (0) +\ell_0(0)\ge u_{\varepsilon}(0) $, which is contradictory. Therefore, the minimum of  $u_\varepsilon$ in $\overline {B_{\varepsilon}(0)}$ is achieved at $|x|=\varepsilon$, which is impossible because 
 $ \lim_{r\to \varepsilon_-} \displaystyle \frac{\partial u_\varepsilon}{\partial r} (r) >0$. We have proved by contradiction that $u_\varepsilon$ does not have  a  local maximum at $0$.


On the other hand, the semi-concavity and the radial symmetry of  $u_\varepsilon$ imply that $\frac {\partial u_\varepsilon} {\partial r}$ has a limit at $0$, which is nonpositive.

Combining the latter two points yields  that $u_\varepsilon $ has a derivative at $0$ and that
  $\frac {\partial u_\varepsilon} {\partial r}(0)=0$. This implies that
  \begin{equation}
    \label{eq:79}
u_\varepsilon(0)= -\overline H (0) +\ell_0(0),    
  \end{equation}
  and from the convergence of $u_\varepsilon(0)$ to $u(0)$, that
  \begin{equation}
    \label{eq:80}
E= \overline H (0) -\ell_0(0).       
\end{equation}
We have also proved that $-E$ is the minimal value of $u_\varepsilon$ on $B_\varepsilon(0)$. Note that  \eqref{eq:79} and \eqref{eq:80} may be obtained in an easier way by using arguments from the theory of optimal control.

\paragraph{Step 3} Our next step consists of proving that it possible to choose a  positive integer $\underline q$ such that 
\begin{equation}
  \label{eq:81}
  \min_{|x|\ge \underline q \varepsilon } u_{\varepsilon}(x) \ge \max _{|x|\le  \varepsilon } u_{\varepsilon}(x). 
\end{equation}
First, we know that there exists $M>0$ independent of $\varepsilon$ such that $\|D u_\varepsilon\|_\infty\le M$. This and  \eqref{eq:79}-\eqref{eq:80} imply that 
\begin{equation}
   \label{eq:82}
   \max _{|x|\le  \varepsilon } u_{\varepsilon}(x)\le -E + M \varepsilon.
\end{equation}
On the other hand, \eqref{eq:78} implies
\begin{equation}
  \label{eq:83}
  \begin{array}[c]{rcl}
    \ds   \min _{     \underline q\varepsilon \le |x| \le R_0} u_{\varepsilon}(x) &\ge&  u_{\varepsilon}(|y|=\varepsilon) + \delta (\underline q-1) \epsilon     \\
     &\ge&  -E + \delta (\underline q-1) \epsilon .
  \end{array}
\end{equation}
We deduce from  \eqref{eq:82} and  \eqref{eq:83} that if $\underline q>1+ M/\delta$, then $ \min_{q \varepsilon\le |x| \le R_0 } u_{\varepsilon}(x) \ge \max _{|x|\le  \varepsilon } u_{\varepsilon}(x)$.

Moreover,  $ u_{\varepsilon}$ cannot reach a value smaller than $ \max _{|x|\le  \varepsilon } u_{\varepsilon}(x)$ at some $y$ such that $|y|>R_0$, because, if it was the case, then the infimum of $u_{\varepsilon}|_{\{|x|\ge R_0\}}$ would be 
achieved by some  $\underline x$, $R_0< |\underline x|$, and 
\begin{displaymath}
 \max _{|x|\le  \varepsilon } u_{\varepsilon}(x) \ge   u_\epsilon(\underline x)\ge -\bar H(0),
\end{displaymath}
which is a contradiction for $\varepsilon\le \varepsilon_0$. Hence  \eqref{eq:81} holds for   $\underline q>1+ M/\delta$ and $\varepsilon \le \varepsilon_0$.

It is then easy to obtain   \eqref{eq:0-1-sol} for any  $\varepsilon \le \varepsilon_0$, with
\begin{equation}
  \label{eq:84}
q= \underline q+2, \quad \underline q>1+ M/\delta.
\end{equation}

Finally, the explicit expression~\eqref{eq:random-HJ-U} of the solution to~\eqref{eq:random-HJ-equ} is established, it is a straightforward adaptation of our results of Section~\ref{ssec:proba} to indeed identify the limit  as $\varepsilon$ vanishes.  All our arguments and conclusions hold \emph{mutatis mutandis} with minor modifications.

\paragraph{Generalization}

The results obtained above can be  generalized to a class of non-radially symmetric situations under the following assumptions:

\begin{itemize}
\item
  The Hamiltonian $\overline H$ defined by  $\overline H (p) = \max_{a\in A} -\overline f(a)\cdot p - \overline \ell(a)$ is a   $C^ 2$, convex, globally Lipschitz   function which reaches its minimum at $p=0$. This implies that $\overline H (0) =- \min_{a\in A}\overline \ell(a)= - \min_{a\in A: \overline f(a)=0} \overline \ell(a)$
\item The function $\ell_0: \R^d\times A \to \R_- $ is smooth, and for all $a\in A$, $\ell_0(\cdot, a)$ is supported in the unit ball $B_1(0)\subset \R^d$.
  The perturbed Hamiltonian is $H(y,p)=  \max_{a\in A} \overline f(a)\cdot p - \overline \ell(a) -\ell_0(y,a)$
\item  The ergodic constant $E$ defined in 
Section~\ref{sec:ergod-const-assoc} satisfies \eqref{eq:E-inf-H0}
\item  There exists two  Hamiltonians $\overline H_1$ and  $\overline H_2$
  which are radially symmetric, $C^ 2$, convex, globally Lipschitz fonctions defined on $\R^d$ and
 two smooth, radially symmetric functions  $\ell_{0,1}$ and $\ell_{0,2}$,
 defined on $\R^d$ with values in  $\R_-$, supported in $B_1(0)$, 
  such that the following conditions are satisfied:
 \begin{itemize}
 \item   the pairs $(\overline H_1, \ell_{0,1})$ and  $(\overline H_2, \ell_{0,2})$ satisfy all the assumptions made in the radially symmetric setting.  In 
particular, as we have seen above, the related ergodic constants $E_1$ and $E_2$ 
are given by $E_1= \overline H_1(0) -  \ell_{0,1}(0)$ and   $E_2= \overline H_2(0) -  \ell_{0,2}(0)$
\item  there holds
  $\overline H_1(p)\le \overline H(p) \le \overline H_2(p)$ for all $p\in \R^d$,
  $\ell_{0,1}(x)\ge \ell_0(x,a) \ge \ell_{0,2}(x)$  for all $x\in \R^d$, $a\in A$.
  Moreover, $\overline H_1(0)= \overline H(0) =\overline H_2(0)$ and  $\ell_{0,1}(0)=\ell_{0,2}(0)$. 
 \end{itemize}
  The corresponding perturbed Hamiltonians are $H_1(y,p)= \overline H_1(p) -\ell_{0,1}(y)$ and  $H_2(y,p)= \overline H_2(p) -\ell_{0,2}(y)$. Obviously,
  \begin{equation}
    \label{eq:85}
   H_1(y,p)\le H(y,p)\le H_2(y,p),    
 \end{equation}
 and it can be checked that the assumptions imply that $E_1= E= E_2$.
\end{itemize}
Using  \eqref{eq:85} and comparison principles, it is possible to prove that
\eqref{eq:random-HJ-U-simplifie} still holds in this situation, provided that $q$ is chosen large enough, and all the results proved in the radially symmetric setting remain valid. Note that the assumptions made are rather strong: in particular they require that $a \mapsto \ell_0(0,a) $ be constant.

\bibliographystyle{siam}
\bibliography{Hom_HJB_defect}

\end{document}